\documentclass[11pt]{amsart}

\usepackage{amsmath,amssymb,amsthm}
\usepackage{upgreek}
\usepackage{amsaddr}
\usepackage{mathtools}
\usepackage{geometry}
\usepackage{graphicx}
\usepackage{booktabs}
\usepackage{microtype}
\usepackage{hyperref}
\usepackage{enumerate}
\usepackage{xcolor}
\usepackage{listings}
\usepackage{tikz}

\newcommand{\ShuffleSquares}{%
  \mathord{%
    \mathchoice
      {\ShuffleSquaresDrawing{1.8ex}}
      {\ShuffleSquaresDrawing{1.8ex}}
      {\ShuffleSquaresDrawing{1.3ex}}
      {\ShuffleSquaresDrawing{1.0ex}}%
  }%
}

\newcommand{\ShuffleSquaresDrawing}[1]{%
  \vcenter{\hbox{%
    \begin{tikzpicture}[x=#1,y=#1]
      \fill[white] (0,0) rectangle (1,1);

      \foreach \x/\y in {
        0/3, 2/3,
        1/2, 2/2,
        0/1, 3/1,
        1/0, 3/0
      }{
        \fill[black]
          ({\x/4},{\y/4})
          rectangle
          ({(\x+1)/4},{(\y+1)/4});
      }

      \draw[black,line width=0.12pt,step=0.25]
        (0,0) grid (1,1);

      \draw[black,line width=0.3pt]
        (0,0) rectangle (1,1);
    \end{tikzpicture}%
  }}%
}

\definecolor{ProofSquarePurple}{RGB}{112,48,160}
\newif\ifPurpleProofSquares
\PurpleProofSquarestrue 
\ifPurpleProofSquares
  \colorlet{ProofSquareFill}{ProofSquarePurple}
\else
  \colorlet{ProofSquareFill}{black}
\fi
\newcommand{\ProofSquareCellSize}{0.4ex}
\newcounter{proofsquare}
\AddToHook{env/proof/begin}{\stepcounter{proofsquare}}
\newcommand{\ProgressiveProofSquare}{%
  \begin{tikzpicture}[x=\ProofSquareCellSize,y=\ProofSquareCellSize,baseline=0pt]
    \fill[white] (0,0) rectangle (6,6);
    \pgfmathtruncatemacro{\filledcells}{min(max(\value{proofsquare},0),36)}
    \ifnum\filledcells>0\relax
      \foreach \cell in {1,...,\filledcells}{%
        \pgfmathtruncatemacro{\cellcolumn}{mod(\cell-1,6)}
        \pgfmathtruncatemacro{\cellrow}{5-floor((\cell-1)/6)}
        \fill[ProofSquareFill]
          (\cellcolumn,\cellrow) rectangle ++(1,1);
      }%
    \fi
    \draw[black,line width=0.1pt,step=1] (0,0) grid (6,6);
    \draw[black,line width=0.3pt] (0,0) rectangle (6,6);
  \end{tikzpicture}%
}

\graphicspath{{figures/}}
\hypersetup{
  colorlinks=true,
  linkcolor=blue!45!black,
  citecolor=green!35!black,
  urlcolor=blue!55!black
}

\newtheorem{theorem}{Theorem}[section]
\newtheorem{proposition}[theorem]{Proposition}
\newtheorem{lemma}[theorem]{Lemma}
\newtheorem{corollary}[theorem]{Corollary}
\theoremstyle{definition}
\newtheorem{definition}[theorem]{Definition}
\newtheorem{example}[theorem]{Example}

\theoremstyle{remark}
\newtheorem{remark}[theorem]{Remark}

\newcommand{\eps}{\varepsilon}
\newcommand{\cA}{\mathcal{A}}
\newcommand{\cB}{\mathcal{B}}
\newcommand{\cC}{\mathcal{C}}
\newcommand{\cQ}{\mathcal{Q}}

\newcommand{\K}{\textcolor{purple!69!black}{\boldsymbol{\mathsf{K}}}}
\newcommand{\EPK}{\operatorname{EPK}}
\newcommand{\SSq}{\ShuffleSquares}
\newcommand{\rev}{\operatorname{rev}}
\newcommand{\doi}[1]{\href{https://doi.org/#1}{\nolinkurl{doi:#1}}}
\newcommand{\word}[1]{\textcolor{purple!69!black}{\mathtt{#1}}}
\newcommand{\classcode}{\texttt{verify\_c1\_c2\_c3.py}}
\newcommand{\repaircode}{\texttt{verify\_local\_repairs.py}}

\lstdefinestyle{pseudocodestyle}{
  basicstyle=\ttfamily\small,
  numbers=none,
  frame=single,
  rulecolor=\color{black!25},
  backgroundcolor=\color{black!2},
  breaklines=true,
  breakatwhitespace=false,
  showstringspaces=false,
  keepspaces=true,
  columns=fullflexible,
  tabsize=2,
  captionpos=b,
  escapeinside={(*@}{@*)}
}

\title[Shuffle Squares in Differentiable Words]
{Shuffle Squares in Differentiable Words}

\author{Micha\l{} Zwierzy\'nski}
\email{Michal.Zwierzynski@pw.edu.pl}
\email{ORCID: 0000-0002-9627-1563}
\address{Warsaw University of~Technology\\
Faculty of~Mathematics and Information Science\\
ul. Koszykowa 75\\
00-662 Warsaw, Poland\\[0.5ex]
}

\keywords{shuffle square, smooth word, Kolakoski word, run-length encoding, local factorization, computer-assisted proof}
\subjclass[2020]{68R15, 11B83, 68Q45}

\begin{document}
\raggedbottom

\begin{abstract}
Experiments on~binary run-length differentiability lead to~sharp computer-assisted criteria for~shuffle squares. Every $C^3$-word of~length greater than~$16$ is a~shuffle square exactly when both letter multiplicities are even. All~$34$ nonempty even-Parikh exceptions are smooth and persist in~every higher differentiability class. For~$C^2$ the~sharp threshold is~$48$, with~$212$ nonempty exceptions. At~level~$C^1$ no global parity threshold exists, but every even-Parikh non-shuffle-square of~length at~least~$36$ has proper nonempty shuffle-square prefixes and suffixes. Exactly~$230$ nonempty even-Parikh $C^1$-words have no nonempty shuffle-square prefix. The~full tree avoiding such prefixes eventually consists of~$422$ periodic rays. Consequently, a~nonempty Kolakoski prefix is a~shuffle square exactly when both multiplicities are even, apart from~lengths~$4$ and~$8$. We also characterize classes of~morphisms reflecting shuffle squares. For~doubly binary words, we determine the~exact deletion distance and largest twins, and prove sharp bounds for~single local repairs. Exact recurrences, residual-state checks, and separate Python programs make the~finite computations reproducible.
\end{abstract}

\maketitle

\tableofcontents

\section{Introduction}

\noindent
A~word is a~\emph{shuffle square} if its positions can be divided into~two sets which, read from~left to~right, induce two identical subwords.  Shuffle squares were introduced by~Henshall, Rampersad, and Shallit \cite{HenshallRampersadShallit}.  Recent work includes \cite{DatkoPawlik,GrytczukPawlikRucinskiProceedings}.

Recognizing shuffle squares is NP-complete even over a~binary alphabet~\cite{BulteauVialette}. He, Huang, Nam, and Thaper~\cite{HeHuangNamThaper} obtained enumerative results and conjectured that almost all binary words with even Parikh vector are shuffle squares. He and Post~\cite{HePost} proved this conjecture. Our results give sharp criteria valid for~every sufficiently long word in~specified finite differentiability classes.

The~classical \textit{Kolakoski word} is the~self-generating binary word equal to~its own sequence of~run lengths.  It was already studied by~Oldenburger in~1939 \cite{Oldenburger} and was later posed as~Problem~5304 by~Kolakoski \cite{Kolakoski}.  For~this reason the~name \emph{Oldenburger--Kolakoski word} is also used.  We take the~version beginning with~$\word{1}$:
\begin{align}\label{eq:K}
  \K=\word{122112122122112112212112122112112}\cdots,
  \qquad \Delta(\K)=\K,
\end{align}
where $\Delta$ denotes run-length encoding.  See \cite{Sing} for~a~survey and \cite{OEIS000002} for~extensive data and references.

The~work of~Grytczuk, Pawlik, and Ruci\'nski develops structural and algorithmic aspects of~shuffle squares \cite{GrytczukPawlikRucinskiProceedings,GrytczukPawlikRucinskiPreprint}. In~its~expanded version, the~authors study prefixes of~the~Kolakoski word and formulate Problem~1 concerning the~characterization of~shuffle squares among doubly binary words \cite[Section~6.1]{GrytczukPawlikRucinskiPreprint}. This~problem motivated the~present investigation. Although we~do not resolve it~in~full, we~obtain partial answers for~several natural subclasses and determine exactly which prefixes of~the~Kolakoski word are shuffle squares.

The~starting point was an~experimental comparison between the~parities of~the~letter counts and exact shuffle-square recognition for~finite Kolakoski prefixes. The~short words at~lengths~$4$ and~$8$ are obstructions, while the~first shuffle-square prefix has length~$16$. The~subsequent agreement with~the~parity condition suggested that these two obstructions might be the~only ones. To~understand why, we moved from~individual prefixes to~exhaustive searches in~the~larger languages of~smooth and finitely differentiable words. These searches make it possible to~study a~local obstruction at~every position and then use it in~a~proof for~arbitrary lengths.

For~$m\geqslant 1$, write
\begin{align*}
  P_m=\K_1\K_2\cdots\K_m.
\end{align*}
For~a~finite binary word~$w$ and $\word{a}\in\{\word{1},\word{2}\}$, let $|w|_{\word{a}}$ denote the~number of~occurrences of~$\word{a}$ in~$w$.  The~pair $(|w|_{\word{1}},|w|_{\word{2}})$ is the~\emph{Parikh vector} of~$w$, in~the~standard terminology of~combinatorics on~words \cite{Lothaire}.  We call this vector \emph{even} when both coordinates are even.

We use the~mnemonic notation
\begin{align}\label{eq:EPKdef}
  \EPK&=\{m\geqslant 1: |P_m|_{\word{1}}\equiv |P_m|_{\word{2}}\equiv0\pmod 2\}.
\end{align}
Thus~$\EPK$ is the~set of~\emph{even-Parikh Kolakoski lengths}.  Its increasing enumeration is OEIS A398324 \cite{OEIS398324}:
\begin{align}\label{eq:EPKterms}
  4,8,16,20,24,28,34,36,40,44,48,56,60,64,66,68,72,76,80,\ldots.
\end{align}
One consequence of~our general finite-differentiability theorem is the~following exact characterization.

\begin{theorem}\label{thm:main-intro}
For~every $m\geqslant1$,
\begin{align*}
  P_m\text{ is a~shuffle square}
  \quad\Longleftrightarrow\quad
  m\in\EPK\setminus\{4,8\}.
\end{align*}
\end{theorem}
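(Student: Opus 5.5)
The plan is to derive Theorem~\ref{thm:main-intro} from the general finite-differentiability criterion announced in the abstract: a $C^3$-word of length greater than $16$ is a shuffle square if and only if its Parikh vector is even. The Kolakoski word satisfies $\Delta(\K)=\K$, so every factor of $\K$ is smooth, i.e.\ $C^\infty$. In particular every prefix $P_m$ is a $C^3$-word. For prefixes one must take a little care with the boundary runs: the last run of $P_m$ may be truncated, so $\Delta(P_m)$ is a prefix of $\K$ whose last letter may have been lowered from $\word{2}$ to $\word{1}$. I would therefore check that the paper's notion of $C^k$ (closure under taking factors, with $\Delta$ tolerating truncated boundary runs) makes every $P_m$ a $C^3$-word. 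This is essentially immediate, because $\Delta^j(P_m)$ is a prefix of $\K$ up to its last letter for each $j\le 3$.

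The forward direction needs no differentiability. If $P_m$ is a shuffle square, each letter is split equally between the two identical subwords, so both $|P_m|_{\word{1}}$ and $|P_m|_{\word{2}}$ are even and $m\in\EPK$. For the converse with $m>16$, the $C^3$ theorem applies directly: $m\in\EPK$ means the Parikh vector is even, so $P_m$ is a shuffle square.

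It remains to handle the finitely many lengths $m\le16$. The only ones in $\EPK$ are $4,8,16$. We have $P_4=\word{1221}$, whose only possible square root has Parikh vector $(1,1)$, i.e.\ $\word{12}$ or $\word{21}$. Neither works: $\word{21}$ would need to start at the initial $\word{1}$, which is impossible, and $\word{12}$ would need both copies of $\word{2}$ to be followed by a $\word{2}$ assigned to the other copy. So $P_4$ is not a shuffle square. For $P_8=\word{12211212}$ the root has Parikh vector $(2,2)$, and a short exhaustive check shows no valid split exists. Alternatively, one can note that $P_4$ and $P_8$ appear in the list of $34$ even-Parikh $C^3$ exceptions. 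For $P_{16}=\word{1221121221221121}$ I would exhibit an explicit splitting into two copies of a length-$8$ root with Parikh vector $(4,4)$.

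The only genuine obstacle is the $C^3$ threshold theorem itself, which is assumed here as the paper's main result. Within this proof, the delicate points are the verification that prefixes (with truncated final runs) fall in the $C^3$ class as the paper defines it, and producing the explicit witness for $m=16$ together with the two non-existence checks. Those finite checks I would simply cite from the paper's exhaustive enumeration of exceptions.
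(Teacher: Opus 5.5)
Your proposal is correct and follows the paper's proof. One direction uses parity (Lemma~\ref{lem:even-necessary}) plus direct checks that $P_4$ and $P_8$ fail; the other uses Theorem~\ref{thm:C3-parity} for $m>16$, applied via smoothness of all factors of $\K$ (Corollary~\ref{cor:K-factors-smooth}), together with an explicit decomposition of $P_{16}$.

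Two small corrections:
\begin{enumerate}[(i)]
\item Your side claim that $\Delta^j(P_m)$ is a prefix of $\K$ up to its last letter fails for the plain encoding. For instance, $\Delta(P_8)=\word{122111}$ and $\Delta^2(P_8)=\word{123}$. This is precisely why the paper uses Dekking's $D$, which deletes the ambiguous endpoint $\word{1}$'s, so you should just cite Corollary~\ref{cor:K-factors-smooth}.
\item For $P_4$, your argument excluding the root $\word{12}$ is garbled. The clean reason is that the final letter $\word{1}$ of $P_4$ would have to end a copy of $\word{12}$, which is impossible.
\end{enumerate}
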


The~proof is given in~Section~\ref{sec:kolakoski-prefixes}, where we deduce the~characterization from~Theorem~\ref{thm:C3-parity} below.

Finite differentiability provides a~natural hierarchy in~which to~place a~stronger statement. Smooth words, also called $\cC^\infty$-words, form the~smallest class considered here. The~notion originates in~Dekking's study of~the~factor language of~the~Kolakoski word (see \cite{BertheBrlekChoquette,Carpi,CassaigneHenry,Dekking,Weakley}). The~argument combines elementary closure and factorization properties with~finite computations specified by~exact recurrences and pseudocode. The~accompanying programs reproduce these computations without external data.

\begin{theorem}\label{thm:C3-parity}
Let $w\in\cC^3$ and $|w|>16$. Then $w$ is a~shuffle square if and only if
\begin{align*}
  |w|_{\word{1}}\equiv |w|_{\word{2}}\equiv0\pmod2.
\end{align*}
The~bound~$16$ is sharp. Moreover, exactly~$34$ nonempty even-Parikh $C^3$-words are not shuffle squares, and all of~them are smooth.
\end{theorem}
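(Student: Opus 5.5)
We prove the two directions separately and then settle the finite part by computation. Necessity of the parity condition is immediate: if $w$ is the shuffle of two copies of a word $u$, then $|w|_{\word{a}}=2|u|_{\word{a}}$ for each letter $\word{a}$. The substance is sufficiency for $|w|>16$, and the plan is a local factorization argument. It rests on two elementary closure facts. First, a concatenation of shuffle squares is a shuffle square. Second, the class $\cC^3$ is closed under taking factors, since a factor of a $C^k$-word is again $C^k$: it suffices to discard a partial run at either end, and this propagates through the derivatives. It therefore suffices to show that every even-Parikh $C^3$-word $w$ with $|w|>16$ factors as $w=xy$ with $x,y$ nonempty even-Parikh factors, each of which is either a shuffle square or is handled by induction.

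The key steps, in order, are as follows.
\begin{enumerate}[(i)]
\item By exhaustive search over $C^3$-words up to a modest length $L$, say $L\approx 40$, determine exactly which even-Parikh $C^3$-words are not shuffle squares. Shuffle-square recognition uses a standard dynamic program on the pair of partially matched copies, which is feasible at these lengths. The search should produce the $34$ exceptions, all of length at most $16$. For each exception we check that its derivatives of every order remain $C^\infty$-admissible, which gives smoothness. It also shows that the bound $16$ is sharp.
\item Establish a local lemma. Every even-Parikh $C^3$-word of length in a window $(16,L]$ is a shuffle square, and moreover every $C^3$-word of length $>L$ has a nonempty even-Parikh prefix $x$ of bounded length that is a shuffle square and whose complementary suffix $y$ is either empty or of length $>16$, or else is a shuffle square. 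Since the parity of the Parikh vector of a prefix takes only four values, among any five consecutive prefixes two share the same parity class. Hence even-Parikh prefixes occur with bounded gaps once we also account for the constraint that runs have length $1$ or $2$. The content lies in ensuring that the chosen prefix is not one of the $34$ exceptions and that the leftover suffix avoids short exceptional lengths.
\item Carry out an induction on $|w|$. Split $w=xy$ as in (ii). Then $y$ is an even-Parikh $C^3$-word, being a factor with even Parikh vector. If $|y|>16$, then $y$ is a shuffle square by induction. Otherwise we choose a different split, and the finite check guarantees that such a split exists.
\end{enumerate}

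The main obstacle is step (ii). We must certify, for all lengths, that a good split always exists, that is, one avoiding the exceptional set on both sides. The approach would be to encode the search as a finite automaton. Its states record the recent suffix, bounded by the length of a $C^3$ window, together with the parity vector and the residual distance to the last good cut. We then verify by a residual-state check that every reachable state reaches an admissible cut within bounded length. The closure under factors ensures that the set of states is finite. If the naive splitting fails for some residual states, the fallback is to enlarge the prefix window or to allow the splits $x y z$ with the middle factor checked directly as a shuffle square.
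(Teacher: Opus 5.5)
\paragraph{The gap.} The missing piece is the step you flag yourself as the main obstacle, and the argument you give for it does not work.

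\begin{itemize}
\item Two prefixes in the same parity class only show that the factor between them has even Parikh vector. They do not produce an even-Parikh \emph{prefix}.
\item The constraint that runs have length $1$ or $2$ does not repair this. Every prefix of $\word{12}(\word{1122})^\omega$ lies in $\cC^1$, yet no nonempty prefix of it is even-Parikh.
\item Factor-closure cannot be what makes your automaton finite. The class $\cC^1$ is factor-closed, but its tree of words with no shuffle-square prefix is infinite.
\end{itemize}

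A uniform bound on the distance to an admissible cut is a genuine property of the higher classes. It must be proved, or certified by an explicit finite computation. Your proposal leaves it as ``the finite check guarantees that such a split exists.'' Without it, neither sufficiency above length $L$ nor the count of exactly $34$ exceptions is established.

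\paragraph{How to close it.} Once a uniform bound is available, your concern about avoiding exceptions on both sides of the split disappears.

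The paper proves a parity-free local lemma: every $C^3$-word of length $24$ has a nonempty shuffle-square suffix. This holds because the tree of $C^3$-words with no shuffle-square prefix has $175$ nodes and is empty at level $24$; reversal turns prefixes into suffixes. For $|w|\geqslant42$, peel off such a suffix $U$ with $|U|\leqslant24$. The remainder is an even-Parikh $C^3$-word of length at least $18$, so strong induction over the complete base of even lengths $18,\ldots,40$ applies.

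Your own idea can also be completed without that tree.
\begin{itemize}
\item Read $w$ in aligned pairs after an even-Parikh prefix. An equal pair preserves both parities and a mixed pair flips both, so after a mixed pair the next even-Parikh prefix ends at the next mixed pair.
\item Because $w$ avoids $\word{111}$ and $\word{222}$, the equal pairs between two consecutive mixed pairs are complete runs of length $2$. Three of them would put $\word{222}$ into $D(w)$, which is impossible for $w\in\cC^2$.
\item Hence consecutive even-Parikh prefixes of $w$ are at most $8$ apart.
\item Let $x$ be the shortest even-Parikh prefix of length at least $18$. Then $18\leqslant|x|\leqslant24$, so for $|w|\geqslant42$ both $x$ and the remaining suffix $y$ have length at least $18$.
\item Your base computation at lengths $18,\ldots,40$ handles $x$, and induction handles $y$.
\end{itemize}

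This variant needs only the base table, not the bad-prefix tree. In either version, no automaton is required.
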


The~proof is given in~Section~\ref{subsec:stable-regime}.

Since $\cC^\infty\subseteq\cC^k\subseteq\cC^3$ $(k\geqslant3)$, Theorem~\ref{thm:C3-parity} immediately applies to~every $\cC^k$-word with $k\geqslant3$, and in~particular to~every smooth word. The~set of~nonempty exceptions is the~same in~all these classes because all~$34$ $C^3$-exceptions are themselves smooth. The~next level behaves differently but still admits a~complete finite answer. By~Theorem~\ref{thm:C2-parity}, every $C^2$-word of~length greater than~$48$ satisfies the~same parity criterion, the~threshold~$48$ is sharp, and precisely~$212$ nonempty even-Parikh $C^2$-words fail to~be shuffle squares. At~the~$C^1$ level, the~odd-ABBA family gives arbitrarily long even-Parikh exceptions. Nevertheless, Theorem~\ref{thm:C1-reduction} below reduces every sufficiently long exception from either end.

Theorem~\ref{thm:C3-parity} applies to~every sufficiently long $C^3$ factor, not merely to~a~prefix of~$\K$. Its induction is driven by~the~universal length-$24$ lemma: every $C^3$-word of~length~$24$ has nonempty shuffle-square prefixes and suffixes. The~finite induction base consists of~all even-Parikh $C^3$-words of~the~twelve even lengths from~$18$ through~$40$.

The~length-$24$ lemma has additional local-to-global consequences which do not require an~even Parikh vector. It produces bounded-block greedy factorizations in~every class~$\cC^k$ with~$k\geqslant3$, and from every position in~any right-infinite word whose length-$24$ factors lie in~$C^3$. These statements isolate precisely which parts of~the~argument are specific to~$\K$: the~final identification and the~sharper gap bound for~its even-Parikh prefixes.

The~decisive experimental object is the~tree of~words having no nonempty shuffle-square prefix. At~levels~$C^3$ and~$C^2$, this tree terminates at~lengths~$24$ and~$40$, respectively. These finite observations suggest the~bounded local lemmas that drive the~proofs. At~level~$C^1$, the~tree instead settles at~$422$ surviving words per~level. Inspecting their continuations reveals the~periods~$\word{1122}$ and~$\word{2211}$. Mere stabilization of~the~counts would not prove eventual periodicity. The~residual loops and exits extracted from~these experiments form the~basis of~the~finite verification below. Thus each observed pattern leads to~a~specific mathematical step: a~local factorization, an~exhaustive extension reduction, or a~pumping argument.

Two further directions extend the~structural picture. Section~\ref{sec:morphisms} studies nonerasing morphisms which reflect shuffle squares. Odd powers and private symbols yield a~complete classification for~alphabets of~equal size. For~disjoint private pairs, the~ordered-graph method of~\cite{GrytczukPawlikRucinskiPreprint} gives an~exact three-run criterion and a~sufficient condition for~images with more runs. Section~\ref{sec:repairs} determines the~deletion distance and largest twins in~every doubly binary word. It also proves that each even-Parikh exception admits one adjacent transposition or equal-pair deletion near either endpoint, with sharp windows of~$10$ and~$11$ positions. A~transposition preserving~$C^1$ needs a~sharp window of~$15$ positions, while an~equal pair can be inserted before a~suffix of~length at~most~$6$. Four balanced appended letters always suffice for~completion.

All computer assistance is finite and explicit. The~$C^3$ and~$C^2$ bad-prefix trees used for~the~bounded local lemmas have~$175$ and~$823$ nodes, respectively. The~$C^1$ bad-prefix tree is infinite but becomes a~union of~$422$ verified periodic rays. Every finite search and pumping argument is specified by~a~displayed recurrence, exact counts, and pseudocode.

The~shuffle-square recognizers are proved exact. Complete executable implementations are supplied as~separate computational files, described in~Section~\ref{sec:computer-assistance}, while the~recurrences, pseudocode, decisive counts, and proofs remain in~the~article.

Kory Kilpatrick independently proved the~smooth-word parity theorem, including the~classification of~the~$34$ exceptions, and the~Kolakoski prefix characterization in~his preprint~\cite{Kilpatrick}. We learned of~this independent work through private correspondence with~Jaros\l{}aw Grytczuk and Bart\l{}omiej Pawlik. Kilpatrick's proof uses a~factorization into parity-return atoms and a~finite computational verification. Our proofs use trees of~words avoiding shuffle-square prefixes and residual transitions, and establish the~results above for~the~larger finite differentiability classes~$C^3$, $C^2$, and~$C^1$.

\section{Shuffle squares and an~exact residual-state test}

\noindent
Throughout, the~alphabet is $\cA=\{\word{1},\word{2}\}$.  If $w=\word{w}_1\cdots \word{w}_n\in\cA^*$, its length is $|w|=n$.  The~empty word is denoted by~$\eps$. Here, a~Parikh vector is called \textit{even} if each of~its~coordinates is even, or equivalently, if every letter occurs an~even number of~times. For~example, $\word{1221}$ has Parikh vector $(2,2)$, so it~is even, whereas $\word{122112}$ has Parikh vector $(3,3)$, whose coordinates are odd.

\begin{definition}[\cite{Lothaire}]
\label{def:factor}
Let $u,w\in\cA^*$.  The~word~$u$ is a~\emph{factor} of~$w$ if there are words $x,y\in\cA^*$ such that
\begin{align*}
  w=xuy.
\end{align*}
Thus a~factor is a~contiguous subword.  It is a~\emph{prefix} when one can take $x=\eps$, and a~\emph{suffix} when one can take $y=\eps$.  Empty factors, prefixes, and suffixes are allowed unless ``nonempty'' is stated. For~example,
\begin{align*}
  \word{122112}=(\word{12})(\word{211})(\word{2}),
\end{align*}
so $\word{211}$ is a~factor.  The~word $\word{122}$ is a~prefix, while $\word{112}$ is a~suffix.
\end{definition}

\begin{definition}[\cite{Lothaire}]
\label{def:reversal}
For~a~word $w=\word{w}_1\word{w}_2\cdots \word{w}_n$, its \emph{reversal} is the~word
\begin{align*}
  \rev(w):=\word{w}_n\word{w}_{n-1}\cdots \word{w}_1.
\end{align*}
Its \emph{binary complement} $\overline w$ is obtained by~interchanging $\word{1}$ and~$\word{2}$ in~every position.  We put $\rev(\eps)=\overline\eps=\eps$.  For~example,
\begin{align*}
  \rev(\word{12211})=\word{11221},
  \qquad \overline{\word{12211}}=\word{21122}.
\end{align*}
Directly from~the~definition, $\rev(\rev(w))=w$ and $\rev(uv)=\rev(v)\rev(u)$.
\end{definition}

For~a~set $I=\{i_1<\cdots<i_r\}\subseteq\{1,\ldots,n\}$, put
\begin{align*}
  w|_I=\word{w}_{i_1}\cdots \word{w}_{i_r}.
\end{align*}

The~next definition is the~standard positional formulation of~the~notion introduced by~Henshall, Rampersad, and Shallit \cite{HenshallRampersadShallit}.

\begin{definition}[\cite{HenshallRampersadShallit}]
\label{def:shuffle-square}
A~nonempty word $w\in\cA^*$ is a~\emph{shuffle square} if there is a~partition
\begin{align*}
  \{1,\ldots,|w|\}=I\mathbin{\dot\cup}J
\end{align*}
such that $w|_I=w|_J$.  The~common word is called a~\emph{shuffle-square root} of~$w$.  Either set $I$ or $J$ is called a~\emph{support} of~one copy of~the~root.
\end{definition}

The~terminology \emph{root} agrees with~the~usage in~the~recent literature (see, for~example, Datko and Pawlik \cite{DatkoPawlik}).

Equivalently, the~letters of~$w$ can be colored red and blue so that the~red subword and the~blue subword, read in~their original order, coincide.

\begin{example}\label{ex:first-decomposition}
Consider
\begin{align*}
  P_{16}=\word{1221121221221121}.
\end{align*}

Take $I=\{1,2,3,4,6,7,11,13\}$ and $J=\{5,8,9,10,12,14,15,16\}$. Then $P_{16}|_I=P_{16}|_J=\word{12212121}$. Thus $P_{16}$ is a~shuffle square.
\end{example}

We record the~following elementary lemma, whose proof is immediate from~the~definition.

\begin{lemma}
\label{lem:even-necessary}
If $w$ is a~shuffle square, then $|w|_{\word{a}}$ is even for~every letter~$\word{a}$. In~particular, every shuffle square has even length.
\end{lemma}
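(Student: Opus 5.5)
The plan is a direct counting argument based on Definition~\ref{def:shuffle-square}. Suppose $w$ is a~shuffle square. Fix a~partition $\{1,\ldots,|w|\}=I\mathbin{\dot\cup}J$ with $w|_I=w|_J=u$, and fix a~letter $\word{a}\in\cA$. The key observation is that the~occurrences of~$\word{a}$ in~$w$ split according to~this partition:
\begin{align*}
  |w|_{\word{a}}=\#\{i\in I:\word{w}_i=\word{a}\}+\#\{j\in J:\word{w}_j=\word{a}\}=|w|_I|_{\word{a}}+|w|_J|_{\word{a}}.
\end{align*}
This holds because $I$ and $J$ are disjoint and cover all positions, and because $w|_I$ lists exactly the~letters at~positions in~$I$.

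Since $w|_I=w|_J=u$, both summands equal $|u|_{\word{a}}$. Hence $|w|_{\word{a}}=2|u|_{\word{a}}$, which is even. As $\word{a}$ was arbitrary, this proves the~first assertion.

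For~the~second assertion, I would sum over the~alphabet: $|w|=\sum_{\word{a}\in\cA}|w|_{\word{a}}$ is a~sum of~even numbers, so it~is even. Alternatively, one can note directly that $|w|=|I|+|J|=2|u|$. Neither step is a~real obstacle. The~only point needing care is to~state explicitly that restricting $w$ to~$I$ preserves letter multiplicities on~$I$.
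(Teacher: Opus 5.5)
Your proof is correct and is exactly the argument the paper has in mind. The paper gives no written proof, saying only that the lemma is immediate from the definition, and your count $|w|_{\word{a}}=2|u|_{\word{a}}$, together with $|w|=|I|+|J|=2|u|$, is that immediate argument.
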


The~converse is false, even over a~binary alphabet.

\begin{example}\label{ex:1221}
The~word $\word{1221}$ contains two copies of~each letter but is not a~shuffle square.  A~root would have one~$\word{1}$ and one~$\word{2}$, so it would be either $\word{12}$ or $\word{21}$.  The~last~$\word{1}$ cannot belong to~a~copy of~$\word{12}$, and the~first~$\word{1}$ cannot belong to~a~copy of~$\word{21}$.
\end{example}

We shall repeatedly use the~following elementary closure properties.

\begin{lemma}\label{lem:closure-shuffle}
Let $u$ and $v$ be shuffle squares.
\begin{enumerate}[(i)]
\item Their concatenation $uv$ is a~shuffle square.
\item Their reversals $\rev(u)$ and $\rev(v)$ are shuffle squares.
\item Their binary complements $\overline u$ and $\overline v$ are shuffle squares.
\end{enumerate}
\end{lemma}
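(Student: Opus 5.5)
The plan is to work directly with the positional formulation of Definition~\ref{def:shuffle-square} and transport a witnessing partition in each case. Throughout, let $|u|=n$ and $|v|=p$. Fix a partition $\{1,\ldots,n\}=I\mathbin{\dot\cup}J$ with $u|_I=u|_J$, and a partition $\{1,\ldots,p\}=I'\mathbin{\dot\cup}J'$ with $v|_{I'}=v|_{J'}$.

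For (i), write $w=uv$, so that $\word{w}_i=\word{u}_i$ for $i\leqslant n$ and $\word{w}_{n+i}=\word{v}_i$. Set
\begin{align*}
  I''=I\cup(n+I'),\qquad J''=J\cup(n+J').
\end{align*}
These sets partition $\{1,\ldots,n+p\}$. Every index of $I$ lies below every index of $n+I'$, so reading $w|_{I''}$ from left to right gives $w|_{I''}=u|_I\,v|_{I'}$. In the same way $w|_{J''}=u|_J\,v|_{J'}$, and the two words are equal. The root of $uv$ is therefore the concatenation of the two roots.

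For (ii), use the order-reversing bijection $\sigma(i)=n+1-i$. Since $\rev(u)$ has letter $\word{u}_{n+1-i}$ at position $i$, we get
\begin{align*}
  \rev(u)|_{\sigma(I)}=\rev\bigl(u|_I\bigr).
\end{align*}
The same identity holds with $J$ in place of $I$. Hence $\sigma(I),\sigma(J)$ partition $\{1,\ldots,n\}$ and induce the common word $\rev(u|_I)$. The identical argument applies to $v$.

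For (iii), complementation acts letterwise and does not move positions. This gives $\overline u|_I=\overline{u|_I}=\overline{u|_J}=\overline u|_J$, so the same partition $I,J$ works. The same holds for $v$.

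No step is a real obstacle. The only point needing care is the ordering claim in (i), that restricting to a union of an initial block and a shifted final block concatenates the restrictions. This follows at once because all indices of the first block are smaller than all indices of the second. Nonemptiness is preserved in every case, since each construction keeps the length of the word unchanged or, in (i), adds the two lengths.
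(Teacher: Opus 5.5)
Your proof is correct and follows the same route as the paper: you transport the witnessing partition by shifting the second word's supports by $n$ for concatenation, applying the order-reversing map $i\mapsto n+1-i$ for reversal, and keeping it unchanged for complementation. The only difference is that you write out the positional bookkeeping explicitly, where the paper states the same constructions in terms of colorings.
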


\begin{proof}
If $u$ is a~self-shuffle of~$x$ and $v$ is a~self-shuffle of~$y$, then the~two chosen copies in~$u$ followed by~the~corresponding two copies in~$v$ show that $uv$ is a~self-shuffle of~$xy$. Reversing a~coloring that decomposes $u$ into two copies of~$x$ gives a~coloring that decomposes $\rev(u)$ into two copies of~$\rev(x)$. Applying the~same letter permutation to~both copies gives a~coloring of~$\overline u$ by~two copies of~$\overline x$, and similarly for~$v$.
\end{proof}

\subsection{An~exact recognizer}

\noindent
We recall the~queue-automaton characterization of~shuffle squares from~\cite[Section~2]{BussSoltys}. The~sets of~residual words below are the~buffer sets of~He and Post, and recurrence~\eqref{eq:residual-recurrence2} is their Proposition~2.1~\cite{HePost}. The~queue interpretation also appears in~\cite[Lemma~2.4]{Kilpatrick}. We include a~proof and explicit pseudocode to~make the~subsequent finite verifications self-contained.

For~$w=\word{w}_1\cdots \word{w}_n$, define finite sets of~residual words recursively. The~initial state is
\begin{align*}
  \cQ_0(w)=\{\eps\},
\end{align*}
and, for~$0\leqslant i<n$,
\begin{align}\label{eq:residual-recurrence2}
  \cQ_{i+1}(w)
  ={}&\{q \word{w}_{i+1}:q\in\cQ_i(w)\}\cup
  \{r:q=\word{w}_{i+1}r\text{ for~some }q\in\cQ_i(w)\}.
\end{align}

The~intended interpretation is as~follows.  After the~first~$i$ letters have been assigned to~two prospective copies, the~two partial subwords must be prefix-comparable in~any assignment that can eventually succeed.  If the~longer one equals the~shorter one followed by~$q$, then~$q$ is the~residual word.  The~identity of~the~longer color is irrelevant by~symmetry.  The~next letter may be appended to~the~longer copy, changing $q$ to~$q\word{w}_{i+1}$, or it may be appended to~the~shorter copy, which is possible exactly when it matches the~first letter of~$q$.

\begin{proposition}\label{prop:residual-test}
A~nonempty word $w$ is a~shuffle square if and only if $\eps\in\cQ_{|w|}(w)$.
\end{proposition}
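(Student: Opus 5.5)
The plan is to prove, by induction on $i$, an exact description of $\cQ_i(w)$. Call a pair of words $(x,y)$ \emph{realizable at step~$i$} if there is a partition $\{1,\ldots,i\}=I\mathbin{\dot\cup}J$ with $w|_I=x$ and $w|_J=y$. Call such a pair \emph{comparable} if one of $x,y$ is a prefix of the other. For a comparable pair, its \emph{residual} is the word $q$ with $\{x,y\}=\{s,sq\}$. The invariant to establish is
\begin{align*}
  \cQ_i(w)=\{q:\ q\text{ is the residual of some comparable realizable pair at step }i\}.
\end{align*}
For $i=0$ the only realizable pair is $(\eps,\eps)$, whose residual is $\eps$, matching $\cQ_0(w)=\{\eps\}$.

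For the inclusion ``$\subseteq$'', take $q\in\cQ_{i+1}(w)$ produced from some $q'\in\cQ_i(w)$. By induction $q'$ comes from a comparable realizable pair $(s,sq')$. If $q=q'\word{w}_{i+1}$, put position $i+1$ into the support of the longer word; this gives $(s,sq'\word{w}_{i+1})$. If $q'=\word{w}_{i+1}q$, put position $i+1$ into the support of the shorter word; this gives $(s\word{w}_{i+1},s\word{w}_{i+1}q)$. Both new pairs are comparable and realizable, with residual $q$.

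For the inclusion ``$\supseteq$'', take a comparable realizable pair at step $i+1$ and delete position $i+1$ from whichever support contains it. The resulting pair at step $i$ is still comparable, because a prefix of a prefix-comparable pair stays comparable. Concretely:
\begin{itemize}
\item If the letter was removed from the longer (or equal-length) word $sq$, so that $q=q''\word{w}_{i+1}$, the old pair is $(s,sq'')$. Its residual is $q''$, and the first branch of~\eqref{eq:residual-recurrence2} recovers $q$.
\item If the letter was removed from the shorter word, the old pair is $(s',sq)$ with $s=s'\word{w}_{i+1}$. Its residual is $\word{w}_{i+1}q$, and the second branch recovers $q$.
\end{itemize}
The equal-length case needs a little care, but either description applies, and the symmetry that makes the colour of the longer word irrelevant disposes of it.

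The main point, and essentially the only subtlety, is the observation that comparability is inherited by prefixes of the assignment. Realizable non-comparable pairs can never become equal later, because appending letters never repairs a mismatch. So restricting attention to comparable pairs loses nothing for deciding whether equality is reachable.

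Finally, apply the invariant with $i=|w|$. The word $w$ is a shuffle square iff some realizable pair at step $|w|$ satisfies $x=y$. Such a pair is comparable with residual $\eps$. Conversely, residual $\eps$ forces $x=y$. Both supports are then nonempty, since $w$ is nonempty and $|x|=|y|$. Hence $w$ is a shuffle square iff $\eps\in\cQ_{|w|}(w)$.
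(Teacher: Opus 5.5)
Your proof is correct and follows the same idea as the paper's. Residuals are read as differences of prefix-comparable partial colorings, the two branches of the recurrence correspond to appending $\word{w}_{i+1}$ to the longer or the shorter copy, and comparability is inherited by every initial part of a successful coloring. Your explicit two-inclusion invariant is a more formal version of the paper's statement that paths in the recurrence record partial colorings.

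One small correction: in the equal-length case ($q=\eps$) only your second bullet applies. Call the copy containing position $i+1$ the shorter one; the old residual is then $\word{w}_{i+1}$. The first bullet cannot apply, because $\eps$ cannot be written as $q''\word{w}_{i+1}$.
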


\begin{proof}
Suppose first that a~coloring of~the~first~$i$ positions has produced two prefix-comparable subwords.  Removing their common prefix leaves a~unique residual word~$q$, together with~the~information of~which color is longer. Appending $\word{w}_{i+1}$ to~the~longer color gives the~first transition in~\eqref{eq:residual-recurrence2}.  Appending it to~the~shorter color is possible precisely if it cancels the~first letter of~$q$, giving the~second transition.  Thus every path in~the~recurrence records an~actual partial coloring, and every prefix-comparable partial coloring gives such a~path.

In~any successful coloring, both partial subwords after~$i$ positions are prefixes of~the~same final root.  Hence they are necessarily prefix-comparable.  At~the~end they are equal exactly when the~residual word is empty.  This proves both implications.
\end{proof}

\begin{remark}\label{rem:residual-pruning}
Suppose $|w|=n$ and a~residual~$q$ is present after $i$ letters.  Every subsequent transition changes $|q|$ by~exactly one.  Therefore a~state with~$|q|>n-i$ cannot reach~$\eps$ in~the~remaining $n-i$ steps and may be discarded without changing whether the~final empty state is reachable.

All finite verifications below use precisely this logically lossless pruning together with~the~necessary parity test from~Lemma~\ref{lem:even-necessary}.
\end{remark}

For~clarity, the~complete decision procedure used in~all later finite searches is given below. Its update before the~pruning line is exactly recurrence~\eqref{eq:residual-recurrence2}. Throughout the~listings, \texttt{*} denotes concatenation, \texttt{epsilon} denotes the~empty word, and set operations discard repetitions. An~\texttt{assert} statement checks the~displayed condition and stops the~verification if it fails.

\begin{lstlisting}[style=pseudocodestyle,caption={The exact residual-state recognizer},label={lst:residual-test}]
procedure SHUFFLE-SQUARE(w)
  if w is empty or some letter multiplicity is odd then return false
  Q <- {epsilon}
  for i <- 1,...,length(w) do
    NEXT <- {q*(*@$\word{w}_{i}$@*) : q in Q}
    NEXT <- NEXT union {r : q = (*@$\word{w}_{i}$@*)*r for some q in Q}
    Q <- {q in NEXT : length(q) <= length(w)-i}
  return (epsilon is in Q)
\end{lstlisting}

Listing~\ref{lst:residual-test} is therefore a~literal implementation of~Proposition~\ref{prop:residual-test}, including the~lossless pruning from~Remark~\ref{rem:residual-pruning}.

For~the~larger $C^2$ enumeration it is convenient to~process a~whole run at~once. We record an~equivalent exact test closely related to~the~canonical-twins representation in~\cite{GrytczukPawlikRucinskiPreprint}. For a~word $w=\word{w}_1\cdots \word{w}_n$, a~pair of \emph{perfect twins} is a~pair of disjoint sets of~positions $I,J\subseteq\{1,\ldots,n\}$ such that $I\mathbin{\dot\cup}J=\{1,\ldots,n\}$ and, writing $I=\{i_1<\cdots<i_m\}$ and $J=\{j_1<\cdots<j_m\}$, one has
\begin{align*}
\word{w}_{i_1}\cdots \word{w}_{i_m}
=
\word{w}_{j_1}\cdots \word{w}_{j_m}.
\end{align*}
Thus, the~two induced subwords are equal and together use every position of~$w$. We call such a~pair \emph{canonical} if, within each run of~$w$, all positions assigned to~the~first copy precede all positions assigned to~the~second copy.

\begin{proposition}\label{prop:run-recognizer}
The~procedure in~Listing~\ref{lst:run-test} returns true exactly for~the~nonempty shuffle squares.
\end{proposition}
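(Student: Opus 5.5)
The plan is to reduce the run-based procedure of Listing~\ref{lst:run-test} to the exact residual-state recognizer of Proposition~\ref{prop:residual-test}. The bridge is a normalization lemma: every shuffle square admits a \emph{canonical} pair of perfect twins. Presumably the listing processes one run $\word{a}^\ell$ at a time. For each residual $q$ and each split $\ell=s+t$, it hands $s$ letters to the shorter copy and $t$ letters to the longer one. The shorter copy cancels $s$ leading letters of $q$, which must all be $\word{a}$ (or exhaust $q$, in which case the roles of the copies swap). The longer copy then appends $\word{a}^{t}$. The listing presumably applies these in the canonical order dictated by which copy is ``first''. I would state this per-run transition explicitly as a set map $\cQ\mapsto\cQ'$, so that the listing becomes the iteration of that map over the runs of $w$.

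The first step is soundness. Each per-run transition is a composition of $\ell$ single-letter transitions of recurrence~\eqref{eq:residual-recurrence2}, applied in some particular order. Therefore any residual reachable in the run-based procedure is reachable in the letter-based one. If the procedure ends in $\eps$, Proposition~\ref{prop:residual-test} gives that $w$ is a shuffle square. Any pruning by length or parity in Listing~\ref{lst:run-test} is lossless by Remark~\ref{rem:residual-pruning} and Lemma~\ref{lem:even-necessary}.

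The second step is completeness. Starting from arbitrary perfect twins $I,J$, I would canonicalize within each run. Inside a run all letters are equal, so permuting which positions of the run go to $I$ and which to $J$, while keeping the counts fixed, does not change either induced subword. The contribution of the run to $w|_I$ is still $\word{a}^{|I\cap\text{run}|}$ at the same place in the reading order, and likewise for $J$. Hence we may place all $I$-positions of each run before its $J$-positions, obtaining canonical twins. I then need to check that canonical twins correspond to a path of the run procedure. The residual before a run is determined by the prefix assignment. Processing the $I$-letters of the run and then the $J$-letters gives a valid sequence of letter transitions, because prefix-comparability holds at every intermediate stage of a successful coloring. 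That sequence is exactly one of the per-run options enumerated by the listing.

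The main obstacle is the bookkeeping in this last check. The listing must enumerate, for the fixed order ``first copy then second copy'', both possibilities for which copy is currently longer. It must also handle the switch of the longer role in mid-run, when the shorter copy overtakes the longer one. I would resolve this by tracking an explicit orientation bit in the argument, or alternatively by noting that the symmetry of~\eqref{eq:residual-recurrence2} lets either order of the two blocks be realized. With that settled, the procedure returns true exactly when $\eps$ is reachable from $\{\eps\}$, that is, exactly for the nonempty shuffle squares.
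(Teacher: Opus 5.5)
Your soundness half is fine. One step of Listing~\ref{lst:run-test} from $q$ forms $q\word{a}^x$ and then strips the prefix $\word{a}^y$. This is $x$ appending moves followed by $y$ cancelling moves of \eqref{eq:residual-recurrence2}, and the pruning only discards states. Note, though, that the listing lets the leading copy append \emph{first} and only then cancels. That is the reverse of the order you guessed, and the listing has no role-swap move.

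The gap is in completeness. Your within-run canonicalization is valid, but it does not control which copy leads. So your claim that the letter sequence of a run ``is exactly one of the per-run options'' is not true as stated. For example, $w=\word{21211212}$ has perfect twins $I=\{3,4,5,6\}$ and $J=\{1,2,7,8\}$, both inducing $\word{2112}$. They are vacuously canonical, because every run is monochromatic. Entering the run $\word{11}$, the copy $J$ leads with residual $\word{1}$, and the whole run goes to the lagging copy $I$, which overtakes. The letter path is $\word{1}\to\eps\to\word{1}$. The listing option with this split, $(x,y)=(0,2)$, is rejected, since $\word{1}$ does not begin with $\word{11}$. The residual $\word{1}$ is produced only by $(x,y)=(1,1)$, which encodes a different assignment. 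You correctly flag this as the main obstacle, but both suggested remedies are left unexecuted. It is precisely the point where the run recognizer differs from the letter recognizer.

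The missing idea in the paper is an orientation normalization done before canonicalizing. Pair the $\ell$th positions $i_\ell,j_\ell$ of the two copies and replace them by $\min\{i_\ell,j_\ell\}$ and $\max\{i_\ell,j_\ell\}$. Both new sequences are still increasing and induce the same word, so $i_\ell<j_\ell$ for every $\ell$. In other words, the partial second copy is always a prefix of the partial first copy. The within-run swaps that move first-copy positions ahead of second-copy positions only increase the first-copy count at every cut, so this invariant survives. Then every run is split as $\word{a}^x$ to the first copy followed by $\word{a}^y$ to the second. This is literally the listing's transition, and no role change ever occurs. In the example this gives first copy $\{1,2,4,6\}$ and second copy $\{3,5,7,8\}$, and the run $\word{11}$ becomes the option $(1,1)$.

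Alternatively, your ``symmetry'' remedy can be made rigorous without twins. Within a run of $\word{a}$, every valid interleaving of $x$ appends and $y$ leading cancellations in \eqref{eq:residual-recurrence2} ends at the same residual as appending first:
\begin{enumerate}[(i)]
\item $\word{a}^{j-y}p\word{a}^x$ if $q=\word{a}^jp$ with $p$ nonempty and not beginning with $\word{a}$;
\item $\word{a}^{j+x-y}$ if $q=\word{a}^j$.
\end{enumerate}
Moreover, appending first is the most permissive order. Hence, before pruning, the run procedure reaches exactly the sets $\cQ_i(w)$ at run boundaries. Proposition~\ref{prop:residual-test} together with Remark~\ref{rem:residual-pruning} then finishes the proof. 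One of these two arguments must be supplied.
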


\begin{lstlisting}[style=pseudocodestyle,caption={The exact run-capacity recognizer},label={lst:run-test}]
procedure RUN-SHUFFLE-SQUARE(w)
  if w is empty or some letter multiplicity is odd then return false
  R <- {epsilon}
  for each run (*@$\word{a}$@*)^r of w, from left to right, do
    NEXT <- empty set
    for q in R and x <- 0,...,r do
      y <- r-x
      z <- q*(*@$\word{a}$@*)^x
      if z begins with (*@$\word{a}$@*)^y then insert z with that prefix removed into NEXT
    R <- {q in NEXT : length(q) <= number of unprocessed letters}
  return (epsilon is in R)
\end{lstlisting}

\begin{proof}
Every shuffle square admits a~canonical pair of~perfect twins with~$i_\ell<j_\ell$ for~every~$\ell$. Indeed, pair the~positions carrying corresponding letters of~the~two equal copies and replace each pair by~its left and right endpoint. Since the~two original support sequences are increasing, the~sequences of~left and right endpoints are also increasing, and both still induce the~same word. If within one run a~second-copy position precedes a~first-copy position, interchange their assignments. The~letters are equal, so neither induced word changes. Moreover, the~first-copy count at every initial cut can only increase. Repeating removes all such inversions.

Now scan the~canonical pair just constructed from~left to~right. The~partial second copy is always a~prefix of~the~partial first copy. Let~$q$ be the~unmatched suffix. A~run $\word{a}^r$ is split into $\word{a}^x$ assigned to~the~first copy followed by~$\word{a}^y$ assigned to~the~second, where $x+y=r$. After the~first assignment the~unmatched word is $q\word{a}^x$. The~second assignment is possible exactly when $q\word{a}^x$ begins with~$\word{a}^y$, and deleting this prefix gives the~new residual. This is precisely one transition of~Listing~\ref{lst:run-test}. A~residual longer than the~unprocessed part cannot be cancelled and is discarded without loss, exactly as~in~Remark~\ref{rem:residual-pruning}. Thus canonical perfect twins give a~path from~$\eps$ to~$\eps$. Conversely, every such path specifies the~splits of~all runs and maintains the~prefix invariant, hence constructs two equal subwords partitioning all positions.
\end{proof}

\begin{example}\label{ex:length8}
For~$w=P_8=\word{12211212}$, recurrence \eqref{eq:residual-recurrence2} gives
\begin{align*}
\cQ_0&=\{\eps\},\quad
\cQ_1=\{\word{1}\},\quad
\cQ_2=\{\word{12}\},\quad
\cQ_3=\{\word{122}\},\quad
\cQ_4=\{\word{22},\word{1221}\},\\
\cQ_5&=\{\word{221},\word{12211}\},\quad
\cQ_6=\{\word{21},\word{2212},\word{122112}\},\quad
\cQ_7=\{\word{211},\word{22112},\word{22121},\word{1221121}\},\\
\cQ_8&=\{\word{11},\word{2112},\word{2121},\word{221122},\word{221212},\word{12211212}\}.
\end{align*}
Since $\eps\notin\cQ_8$, the~word $P_8$ is not a~shuffle square. The~same computation at~length~$4$ gives $\cQ_4(\word{1221})=\{\word{22},\word{1221}\}$, in~agreement with~Example~\ref{ex:1221}.

For~comparison, the~positive example $v=\word{1212}$ gives
\begin{align*}
  \cQ_0(v)=\{\eps\},\quad
  \cQ_1(v)=\{\word{1}\},\quad
  \cQ_2(v)=\{\word{12}\},\quad
  \cQ_3(v)=\{\word{2},\word{121}\},\quad 
  \cQ_4(v)=\{\eps,\word{22},\word{1212}\}.
\end{align*}
Here $\eps\in\cQ_4(v)$.  Indeed, the~supports $\{1,2\}$ and $\{3,4\}$ both induce the~root~$\word{12}$.
\end{example}

\section{Run-length encoding and differentiability classes}

\noindent
The~finite derivative used in~this section is due to~Dekking \cite{Dekking}.  Its purpose is to~apply run-length encoding to~a~finite factor without pretending that its two endpoint runs are complete. The~notation $\cC^\infty$ for~words derivable arbitrarily many times was already standard in~the~work of~Weakley and Carpi \cite{Carpi,Weakley}.  Such words are now commonly called \emph{smooth words} \cite{BertheBrlekChoquette}.  Recent literature sometimes says \emph{finite smooth} or \emph{$f$-smooth} to~distinguish this notion from~smooth infinite words (see \cite[Section~1.2]{CassaigneHenry}).

Write a~nonempty binary word in~its run decomposition
\begin{align*}
  w=\word{a}_1^{r_1}\word{a}_2^{r_2}\cdots \word{a}_t^{r_t},
  \qquad \word{a}_i\neq \word{a}_{i+1},\quad r_i\geqslant1.
\end{align*}
Its run-length encoding is
\begin{align*}
  \Delta(w)=\word{r}_1\word{r}_2\cdots\word{r}_t.
\end{align*}
For~example,
\begin{align*}
  \word{122112}=\word{1}^1 \word{2}^2 \word{1}^2 \word{2}^1,
  \qquad \Delta(\word{122112})=\word{1221}.
\end{align*}

The~ordinary run-length encoding of~a~finite factor does not faithfully record a~run cut at~either endpoint.  The~finite derivative removes this boundary ambiguity.

\begin{definition}[\cite{Dekking}]
\label{def:derivative}
The~finite derivative $D(w)$ of~a~binary word~$w$ is defined as~follows. If one of~the~runs of~$w$ has length greater than~$2$, then $D(w)$ is undefined.  Otherwise start with~$\Delta(w)$, delete its first symbol if that symbol is~$\word{1}$, and then delete its last symbol if that symbol is~$\word{1}$. A~single symbol is deleted at~most once.  We also put $D(\eps)=\eps$.
\end{definition}

Thus
\begin{align*}
  D(\word{122112})=\word{22},
  \qquad D(\word{2211})=\word{22},
  \qquad D(\word{1})=D(\word{21})=\eps.
\end{align*}
On~the~other hand, $D(\word{12121})=\word{111}$, and the~next derivative is undefined because $\word{111}$ contains a~run of~length~$3$.

\begin{definition}
\label{def:Ck}
For~an~integer $k\geqslant0$, let
\begin{align*}
  \cC^k=\{w\in\cA^*:D^j(w)\text{ is defined for~every }1\leqslant j\leqslant k\}.
\end{align*}
We call a~member of~$\cC^k$ a~\emph{$\cC^k$-word}. Thus $\cC^0=\cA^*$, and the~convention $D(\eps)=\eps$ allows a~derivative tower which reaches~$\eps$ early to~be continued indefinitely.
\end{definition}

\begin{definition}[\cite{Dekking,Weakley}]
\label{def:smooth}
A~finite binary word~$w$ is \emph{smooth}, or a~$\cC^\infty$-word, if its finite derivatives remain defined until the~empty word is reached. Equivalently,
\begin{align*}
  w\in\cC^\infty:=\bigcap_{k\geqslant0}\cC^k.
\end{align*}
\end{definition}

The~first levels of~the~finite differentiability hierarchy are genuinely different. For~example,
\begin{align*}
  \word{11211211}\mapsto\word{21212}\mapsto\word{111},
\end{align*}
and the~next derivative is undefined. Hence $\word{11211211}\in\cC^2\setminus\cC^3$. Its complement $\word{22122122}$ gives a~second example.

A~complete enumeration of~the~binary words through length~$8$ shows that these are the~two shortest members of~$\cC^2\setminus\cC^3$.

After relabelling the~alphabet $\{\word{0},\word{1}\}$ as~$\{\word{1},\word{2}\}$, the~doubly binary words of~Grytczuk, Pawlik, and Ruci\'nski \cite{GrytczukPawlikRucinskiPreprint} are precisely the~$C^1$-words. Their central odd-ABBA family illustrates the~difference between $C^1$ and~$C^2$. In~their alphabet the~words are $(\word{1001})^n$. With our labels put
\begin{align*}
  W_n=(\word{2112})^n.
\end{align*}
Its run decomposition gives
\begin{align*}
  \Delta(W_n)=\word{1}\,\word{2}^{2n-1}\word{1},
  \qquad D(W_n)=\word{2}^{2n-1}.
\end{align*}
Consequently, $W_1$ is smooth, whereas $W_n\in\cC^1\setminus\cC^2$ for $n\geqslant 2$. In~particular, all odd-power members $W_n$ of~their infinite non-shuffle-square family lie outside~$\cC^2$ except for~$W_1$. Thus the~$C^2$ result below is a~complete solution of~their Problem~1 on a~proper subclass, but it does not settle the~problem for~all doubly binary words.

\begin{definition}[\cite{CassaigneHenry,Dekking}]
\label{def:infinite-smooth}
A~right-infinite word $X\in\cA^{\mathbb N}$ is \emph{smooth} if every iterate $\Delta^j(X)$, $j\geqslant0$, is a~right-infinite word over~$\cA$. This is the~infinite version of~smoothness. It should be distinguished from~the~finite-word notion in~Definition~\ref{def:smooth}.
\end{definition}

For~finite words, smoothness is a~finite condition because every nonempty defined derivative is shorter than its input. For~instance, the~first shuffle-square prefix in~Example~\ref{ex:first-decomposition} has derivative tower
\begin{align*}
\word{1221121221221121}
  \mapsto\word{221121221}
   \mapsto\word{22112}
  \mapsto\word{22}
   \mapsto\word{2}
   \mapsto\eps.
\end{align*}

We record carefully the~closure properties needed later. Factoriality is classical for~smooth words (see \cite[Proposition~2]{Weakley} and the~modern account \cite[Section~1.2.2]{CassaigneHenry}). The~same endpoint argument works at~every finite level. We include the~proof so that the~argument is self-contained for~the~precise convention in~Definition~\ref{def:derivative}.

\begin{lemma}[\cite{CassaigneHenry,Weakley}]
\label{lem:smooth-closure}
For~every $k\geqslant0$ the~following statements hold.
\begin{enumerate}[(i)]
\item Every factor of~a~$\cC^k$-word is a~$\cC^k$-word.
\item A~word~$w$ belongs to~$\cC^k$ if and only if its reversal $\rev(w)$ belongs to~$\cC^k$, and whenever $D(w)$ is defined,
\begin{align*}
  D(\rev(w))=\rev(D(w)).
\end{align*}
\item A~word~$w$ belongs to~$\cC^k$ if and only if its complement $\overline w$ belongs to~$\cC^k$.
\end{enumerate}
The~corresponding three assertions also hold for~smooth words.
\end{lemma}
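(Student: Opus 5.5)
We argue by induction on~$k$, proving the three statements simultaneously. The case $k=0$ is trivial because $\cC^0=\cA^*$. The base of the induction is a statement about a single derivative. For every word~$w$ in which $D(w)$ is defined, we want:
\begin{enumerate}[(a)]
\item $D(u)$ is defined for every factor $u$ of $w$, and $D(u)$ is a factor of $D(w)$;
\item $D(\rev(w))=\rev(D(w))$;
\item $D(\overline w)=D(w)$.
\end{enumerate}
For (c), complementation swaps the letters but leaves the run lengths unchanged. Hence $\Delta(\overline w)=\Delta(w)$, and since the deletion rule only looks at $\Delta$, the two derivatives coincide. For (b), the runs of $\rev(w)$ are the runs of $w$ in reverse order, so $\Delta(\rev(w))=\rev(\Delta(w))$. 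The rule in Definition~\ref{def:derivative} deletes a terminal symbol equal to $\word{1}$ at each end, and it is symmetric under reversal. The only point to check is the convention that a single symbol is deleted at most once. When $\Delta(w)=\word{1}$, both orders of deletion give $\eps$, so the rule still commutes with reversal.

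The main step is (a). Write $w=\word{a}_1^{r_1}\cdots\word{a}_t^{r_t}$ with all $r_i\leqslant2$. A nonempty factor $u$ of $w$ has the form $\word{a}_i^{s}\word{a}_{i+1}^{r_{i+1}}\cdots\word{a}_{j-1}^{r_{j-1}}\word{a}_j^{s'}$ with $1\leqslant s\leqslant r_i$ and $1\leqslant s'\leqslant r_j$; in the case $i=j$ it is just a single partial run. All runs of $u$ have length at most~$2$, so $D(u)$ is defined. Its interior symbols $\word{r}_{i+1}\cdots\word{r}_{j-1}$ form a factor of $\Delta(w)$ that avoids both endpoints of $\Delta(w)$, and is therefore a factor of $D(w)$.

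It remains to examine the two boundary symbols $\word{s}$ and $\word{s}'$. If $s=1$, the symbol is deleted from $D(u)$, so there is nothing to check. If $s=2$, then $r_i=2$. In that case $\word{r}_i=\word{2}$ survives in $D(w)$, because only a symbol $\word{1}$ is ever deleted, whether or not $i=1$. The same reasoning applies to $s'$. When $i=j$, the factor $u$ is $\word{a}^s$, and we get $D(u)=\eps$ or $D(u)=\word{2}$ with $r_i=2$; this is again a factor of $D(w)$. This endpoint bookkeeping is the only delicate point, and it is exactly why the finite derivative deletes boundary $\word{1}$'s.

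The induction step for factors runs as follows. Let $w\in\cC^k$ and let $u$ be a factor of $w$. Then $D(u)$ is a factor of $D(w)\in\cC^{k-1}$. By the induction hypothesis $D(u)\in\cC^{k-1}$, and therefore $u\in\cC^k$.

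For reversal, property (b) gives $D^j(\rev(w))=\rev(D^j(w))$ for every $j$ by iteration. Defined-ness of the derivative is preserved because run lengths are preserved, so $w\in\cC^k$ if and only if $\rev(w)\in\cC^k$. The reverse implication follows from $\rev(\rev(w))=w$.

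For complements, property (c) gives $D(\overline w)=D(w)$. Consequently all higher derivatives of $\overline w$ and $w$ coincide, and membership in $\cC^k$ is equivalent for the two words.

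The statements for smooth words follow by intersecting over all~$k$, since $\cC^\infty=\bigcap_{k\geqslant0}\cC^k$.
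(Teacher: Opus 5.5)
Your proposal is correct and follows the same route as the paper: the single-step fact that $D(u)$ is a factor of $D(w)$, by checking that a surviving endpoint run of $u$ is a complete run of length~$2$ and that the interior runs of $u$ avoid the endpoints of $\Delta(w)$, then iterating, with reversal and complementation handled through the list of run lengths. Your explicit treatment of the one-run factor and of the ``deleted at most once'' convention when $\Delta(w)=\word{1}$ just spells out details that the paper leaves implicit.
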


\begin{proof}
Let $u$ be a~factor of~a~word~$w$ whose derivative is defined.  Every run of~$u$, except possibly its first and last runs, is a~complete run of~$w$. If an~endpoint run of~$u$ is only a~one-letter fragment of~a~run of~$w$, its run length~$1$ is deleted by~$D$.  If an~endpoint run survives in~$D(u)$, then it has length~$2$ and is therefore a~complete run of~$w$.  Moreover, a~one-letter endpoint run of~$w$ cannot survive as~an~interior run of~$u$. It follows that
\begin{align}\label{eq:factor-derivative}
  D(u)\text{ is a~factor of~}D(w),
\end{align}
with~the~empty word allowed.  Iterating \eqref{eq:factor-derivative} proves~(i).

Reversal reverses the~list of~run lengths, while the~two boundary deletions in~Definition~\ref{def:derivative} are symmetric. Hence $D(\rev(w))=\rev(D(w))$, and iteration proves~(ii). Complementation does not change any run length, so $D(\overline w)=D(w)$ whenever the~derivative is defined. This proves~(iii). Since every smooth word belongs to~$\cC^k$ for~every~$k$, and its derivative tower reaches~$\eps$, the~same arguments give the~final assertion.
\end{proof}

The~next factor-language inclusion was the~original motivation for~Dekking's finite derivative.  In~the~notation of~\cite[Section~1.2.3]{CassaigneHenry}, it is the~inclusion $\mathcal L(\mathcal C^\infty)\subseteq\mathcal C_f^\infty$, where $\mathcal L$ denotes the~finite-factor language, $\mathcal C^\infty$ the~class of~smooth infinite words, and $\mathcal C_f^\infty$ the~class of~finite smooth words.  We include the~boundary argument, so the~cited inclusion is not being used as~an~unproved black box.

\begin{proposition}[\cite{CassaigneHenry,Dekking}]
\label{prop:infinite-factors-smooth}
Every finite factor of~a~smooth right-infinite binary word is a~finite smooth word.
\end{proposition}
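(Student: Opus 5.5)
The idea is to show that every finite factor $u$ of a smooth right-infinite word $X$ has a derivative $D(u)$ that is again a finite factor of the smooth right-infinite word $\Delta(X)$, and then to iterate. Since each nonempty derivative is strictly shorter than its input, the tower must reach~$\eps$.

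\emph{Step 1: runs have length at most~$2$.} The word $\Delta(X)$ is a right-infinite word over $\cA=\{\word{1},\word{2}\}$ by Definition~\ref{def:infinite-smooth}, so every run of~$X$ has length $1$ or~$2$. Every run of~$u$ is contained in a run of~$X$, so every run of~$u$ also has length at most~$2$. Hence $D(u)$ is defined.

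\emph{Step 2: $D(u)$ is a factor of $\Delta(X)$.} This is the boundary argument of Lemma~\ref{lem:smooth-closure}(i), but with~$X$ infinite.
\begin{itemize}
\item Every interior run of~$u$ is a complete run of~$X$, and these complete runs are consecutive runs of~$X$. So their lengths form a contiguous block of $\Delta(X)$.
\item An endpoint run of~$u$ of length~$1$ is deleted by~$D$. An endpoint run of length~$2$ is necessarily a complete run of~$X$, since runs of~$X$ have length at most~$2$, and it is kept.
\item There is one subtlety at the left end. If $u$ is a prefix of~$X$, the first run of~$u$ is the first run of~$X$, so its length is indeed recorded as the first letter of $\Delta(X)$. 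Because $X$ is right-infinite, the last run of~$u$ is never cut by the end of the word.
\end{itemize}
Hence $D(u)$ is a contiguous block of $\Delta(X)$, possibly empty.

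\emph{Step 3: iteration.} The word $\Delta(X)$ is itself a smooth right-infinite word, because its iterates $\Delta^j(\Delta(X))=\Delta^{j+1}(X)$ are all right-infinite words over~$\cA$. So Steps~1 and~2 apply again, and induction on~$j$ shows that $D^j(u)$ is defined for every~$j$ and is a factor of $\Delta^j(X)$. The lengths $|D^j(u)|$ strictly decrease while positive, so the tower reaches~$\eps$, and $u\in\cC^\infty$.

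The main obstacle is Step~2: checking that the endpoint deletions in Definition~\ref{def:derivative} exactly match the possibly truncated runs. The delicate case is a length-$1$ endpoint fragment of a length-$2$ run of~$X$. Since $D$ deletes it, no incorrect symbol is produced. The remaining care is to treat a single-run~$u$ consistently with the rule that a symbol is deleted at most once. There, $D(u)$ is either $\eps$ or~$\word{2}$, and $\word{2}$ occurs in $\Delta(X)$ exactly when that run is complete of length~$2$.
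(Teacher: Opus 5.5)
Your proof is correct and is essentially the paper's argument: the endpoint-run analysis shows that $D(u)$ is a possibly empty factor of $\Delta(X)$. Iterating, using that $\Delta(X)$ is again smooth, gives $D^j(u)$ as a factor of $\Delta^j(X)$, and strict length decrease forces the tower to reach $\eps$. You simply spell out more detail, namely that runs of $X$ have length at most $2$ and the single-run case. One small imprecision: for a single-run prefix $u$, its run can still be a truncated fragment of the first run of $X$, but this is harmless since such a fragment has length $1$ and is deleted by $D$.
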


\begin{proof}
Let $u$ be a~finite factor of~a~smooth right-infinite word~$X$.  Its endpoint runs may be truncated pieces of~runs of~$X$, and Definition~\ref{def:derivative} deletes exactly the~ambiguous one-letter endpoint run lengths.  All surviving run lengths are complete.  Hence
\begin{align*}
  D(u)\text{ is a~factor of~}\Delta(X),
\end{align*}
with~the~empty word allowed.  Applying the~same argument successively gives
\begin{align*}
  D^j(u)\text{ is a~factor of~}\Delta^j(X)
\end{align*}
for~every~$j$ for~which $D^j(u)$ is nonempty.  Since all letters in~every $\Delta^j(X)$ belong to~$\{\word{1},\word{2}\}$, each derivative is defined.  A~nonempty finite derivative is strictly shorter than its input, so the~process reaches $\eps$.  Thus $u$ is finite smooth.
\end{proof}

Since $\Delta(\K)=\K$ by~\eqref{eq:K}, all iterates of~$\K$ are right-infinite words over~$\{\word{1},\word{2}\}$. Thus, $\K$ is smooth in~the~sense of~Definition~\ref{def:infinite-smooth}, and Proposition~\ref{prop:infinite-factors-smooth} immediately yields the~following classical result.

\begin{corollary}[\cite{CassaigneHenry,Dekking}]
\label{cor:K-factors-smooth}
Every finite factor of~the~Kolakoski word~$\K$ is smooth.
\end{corollary}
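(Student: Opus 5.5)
The plan is to deduce the corollary directly from Proposition~\ref{prop:infinite-factors-smooth}, applied to the right-infinite word $X=\K$. That proposition already does the real work: it shows that every finite factor of a smooth right-infinite binary word is finite smooth. So the only thing left is to check its hypothesis, namely that $\K$ is smooth in the sense of Definition~\ref{def:infinite-smooth}.

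To check smoothness, I would prove by induction on $j\geqslant0$ that $\Delta^j(\K)=\K$. The base case $j=0$ is trivial. For the step, $\Delta^{j+1}(\K)=\Delta(\Delta^j(\K))=\Delta(\K)=\K$ by the defining identity \eqref{eq:K}. Hence every iterate $\Delta^j(\K)$ is the right-infinite word $\K$, which is over $\{\word{1},\word{2}\}$ by construction.

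There is one point that needs a word of justification: $\Delta(\K)$ must itself be an infinite word over $\cA$. This holds because $\K$ has infinitely many runs and every run has length $1$ or $2$, both of which are letters of $\K$. Both facts are implicit in the self-generating description of $\K$, and $\Delta(\K)=\K$ makes them consistent. With that observation, Definition~\ref{def:infinite-smooth} is satisfied.

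Given this, Proposition~\ref{prop:infinite-factors-smooth} shows that every finite factor of $\K$, including the empty word and every prefix $P_m$, is smooth. I do not expect a genuine obstacle here. The boundary subtleties of finite derivatives, where truncated endpoint runs are deleted by $D$, are already handled inside the proof of Proposition~\ref{prop:infinite-factors-smooth}. The only care needed is to cite \eqref{eq:K} for the fixed-point property rather than re-deriving it.
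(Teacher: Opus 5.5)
Your proposal is correct and matches the paper's argument. The paper also uses $\Delta(\K)=\K$ from \eqref{eq:K} to see that every iterate $\Delta^j(\K)$ is a right-infinite word over $\{\word{1},\word{2}\}$, concludes that $\K$ is smooth in the sense of Definition~\ref{def:infinite-smooth}, and then applies Proposition~\ref{prop:infinite-factors-smooth}; your explicit induction is just a more spelled-out version of that one-line step.
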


\section{Even prefixes of~the~Kolakoski word}

\noindent
Recall the~set~$\EPK$ from~\eqref{eq:EPKdef}.  Since $|P_m|_{\word{1}}+|P_m|_{\word{2}}=m$, the~following conditions characterize membership in~$\EPK$:
\begin{align*}
  m\equiv0\pmod2
  \quad\text{and}\quad
  |P_m|_{\word{1}}\equiv0\pmod2.
\end{align*}
We first give an~elementary strengthening of~the~infinitude assertion in~OEIS A398324 given by Pawlik.

\begin{proposition}\label{prop:even-gap}
The~set~$\EPK$ is infinite.  If $e<e'$ are consecutive elements of~$\EPK$, then
\begin{align*}
  e'-e\in\{2,4,6,8\}.
\end{align*}
\end{proposition}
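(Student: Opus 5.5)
The plan is to reduce membership in~$\EPK$ to a parity count over consecutive two-letter blocks of~$\K$. Since every element of~$\EPK$ is even, only even $m=2j$ matter. Cut $\K$ into aligned blocks
\begin{align*}
  B_j=\K_{2j-1}\K_{2j},\qquad j\geqslant1.
\end{align*}
Call $B_j$ \emph{pure} if it equals $\word{11}$ or $\word{22}$, and \emph{mixed} if it equals $\word{12}$ or $\word{21}$. A pure block changes $|P_{2j}|_{\word{1}}$ by~$0$ or~$2$, and a mixed block changes it by~$1$. Hence $2j\in\EPK$ exactly when the number of mixed blocks among $B_1,\ldots,B_j$ is even.

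Now let $e=2j\in\EPK$ and find the next element. If $B_{j+1}$ is pure, then $2j+2\in\EPK$ and the gap is~$2$. If $B_{j+1}$ is mixed, the next element of~$\EPK$ is $2j'$, where $j'>j+1$ is the index of the first later mixed block, so the gap is $2(j'-j)$. Thus it suffices to prove the following.

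\emph{Key claim.} The word~$\K$ never contains three consecutive pure blocks $B_i,B_{i+1},B_{i+2}$.

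Granting the claim, $j'\leqslant j+4$, so the gap lies in $\{2,4,6,8\}$. The claim also shows that mixed blocks occur infinitely often. Hence every element of~$\EPK$ has a successor, and together with $4\in\EPK$ (as $P_4=\word{1221}$) this gives infinitude.

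To prove the claim, I will use that all runs of~$\K$ have length at most~$2$, and that $\Delta(\K)=\K$ by~\eqref{eq:K}. Suppose $B_i,B_{i+1},B_{i+2}$ are all pure. Adjacent pure blocks must carry different letters, since otherwise they would form a run of length at least~$4$. So the three blocks read $\word{xx}\,\word{yy}\,\word{xx}$ with $\word{x}\neq\word{y}$. Each block is then contained in a run of length at least~$2$, hence exactly~$2$, and each block is a complete run. This produces three consecutive run lengths $\word{222}$ in $\Delta(\K)=\K$, which is a run of length at least~$3$ in~$\K$, a contradiction.

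The only delicate point is this completeness of the three runs. The middle run is flanked by different letters, so it is clearly complete. The outer blocks are complete because a run cannot exceed length~$2$. Everything else is bookkeeping.
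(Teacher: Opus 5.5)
Your proof is correct and follows the paper's argument essentially step for step. Both use the aligned two-letter blocks, note that the parity of $|P_{2j}|_{\word{1}}$ changes exactly at mixed blocks, and rule out three consecutive pure blocks because they would be three complete runs of length $2$, giving the factor $\word{222}$ in $\Delta(\K)=\K$.
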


\begin{proof}
Group the~letters of~$\K$ into~aligned pairs
\begin{align*}
  B_j=\K_{2j-1}\K_{2j}\qquad(j\geqslant1).
\end{align*}
When one passes from~$P_{2j-2}$ to~$P_{2j}$, the~parity of~the~number of~ones changes exactly when $B_j$ is mixed, that is, when $B_j\in\{\word{12},\word{21}\}$.  An~equal pair $\word{11}$ or $\word{22}$ leaves this parity unchanged.

Start at~an~element $e\in\EPK$.  If the~next aligned pair is equal, then $e+2\in\EPK$.  Otherwise the~next pair is mixed and the~parity becomes odd. There cannot be three consecutive equal aligned pairs.  Indeed, equal aligned pairs are full runs, since all runs of~$\K$ have length at~most~$2$. Consecutive equal pairs must alternate between $\word{11}$ and~$\word{22}$.  Three consecutive equal pairs would therefore give three consecutive runs of~length~$2$.  Their run lengths would produce the~factor $\word{222}$ in~$\Delta(\K)=\K$, impossible because every run of~$\K$ has length at~most~$2$.

Consequently, after a~mixed pair, another mixed pair must occur within the~next three aligned pairs.  At~that second mixed pair the~parity returns to~even.

Thus parity returns after at~most four aligned pairs, proving $e'-e\leqslant8$.  Since $4\in\EPK$, the~argument can be iterated indefinitely.
\end{proof}

\begin{example}\label{ex:even-prefixes}
The~successive differences of~the~initial terms in~\eqref{eq:EPKterms} are
\begin{align*}
  4,8,4,4,4,6,2,4,4,4,8,4,4,2,2,4,4,4,\ldots.
\end{align*}
\end{example}

\section{Shuffle squares in~finite differentiability classes}
\label{sec:Ck}

\noindent
For~$k\geqslant1$ and $n\geqslant0$, put $\cC_n^k=\cC^k\cap\cA^n$, and let $\cB_n^{(k)}$ be the~set of~$\cC^k$-words of~length~$n$ having no nonempty shuffle-square prefix. Factoriality gives the~exact recurrence
\begin{align}\label{eq:Ck-bad-tree}
 \cB_{n+1}^{(k)}
 =\{w\word{a}:w\in\cB_n^{(k)},\ \word{a}\in\{\word{1},\word{2}\},\
 w\word{a}\in\cC^k,\ w\word{a}\notin\SSq\},
 \qquad \cB_0^{(k)}=\{\eps\}.
\end{align}
Here and below, writing $u\in\SSq$ means that $u$ is a~nonempty shuffle square. Listing~\ref{lst:bad-prefix-tree} is exactly recurrence~\eqref{eq:Ck-bad-tree}.

\begin{lstlisting}[style=pseudocodestyle,caption={The pruned bad-prefix tree},label={lst:bad-prefix-tree}]
procedure BAD-PREFIX-TREE(k, L)
  B[0] <- {epsilon}
  for n <- 0,...,L-1 do
    B[n+1] <- empty set
    for w in B[n] and (*@$\word{a}$@*) in {(*@$\word{1}$@*),(*@$\word{2}$@*)} do
      if w*(*@$\word{a}$@*) is in C^k and w*(*@$\word{a}$@*) is not a shuffle square then
        insert w*(*@$\word{a}$@*) into B[n+1]
  return B[0],...,B[L]
\end{lstlisting}

For~the~unpruned level and parity checks we use Listing~\ref{lst:parity-levels}. The~test \texttt{SHUFFLE-SQUARE} is the~exact residual-state test in~Listing~\ref{lst:residual-test}. For~each length~$1\leqslant n\leqslant L$, the~output sets \texttt{C[n]}, \texttt{E[n]}, and~\texttt{N[n]} contain, respectively, all $\cC^k$-words, those with even Parikh vector, and those even-Parikh words that fail the~shuffle-square test.

\begin{lstlisting}[style=pseudocodestyle,caption={Complete level and parity enumeration},label={lst:parity-levels}]
procedure PARITY-LEVELS(k, L)
  C[0] <- {epsilon}
  for n <- 0,...,L-1 do
    C[n+1] <- {w*(*@$\word{a}$@*) : w in C[n], (*@$\word{a}$@*) in {(*@$\word{1}$@*),(*@$\word{2}$@*)}, w*(*@$\word{a}$@*) is in C^k}
    E[n+1] <- {w in C[n+1] : both multiplicities in w are even}
    N[n+1] <- {w in E[n+1] : SHUFFLE-SQUARE(w) is false}
  return C, E, N
\end{lstlisting}

\subsection{Differentiability of~order at~least three}\label{subsec:stable-regime}

The~local statement needed for~the~global induction already holds for~$\cC^3$.

\begin{lemma}\label{lem:C3-local}
Every $C^3$-word of~length~$24$ has a~nonempty shuffle-square prefix and a~nonempty shuffle-square suffix.
\end{lemma}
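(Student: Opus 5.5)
The plan is to prove the prefix statement by an exhaustive but finite computation: build the pruned bad-prefix tree of Listing~\ref{lst:bad-prefix-tree} for $k=3$, and then obtain the suffix statement by symmetry. By recurrence~\eqref{eq:Ck-bad-tree}, the set $\cB_n^{(3)}$ consists of exactly the $\cC^3$-words of length $n$ with no nonempty shuffle-square prefix. Factoriality, Lemma~\ref{lem:smooth-closure}(i), justifies generating each level only from the surviving words of the previous level. Every prefix of a $\cC^3$-word is again in $\cC^3$, and a word with no shuffle-square prefix has no such prefix at any shorter length either. Hence every member of $\cB_{n+1}^{(3)}$ extends a member of $\cB_n^{(3)}$, and the recurrence loses nothing.

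The key steps, in order, are as follows.
\begin{enumerate}[(1)]
\item Implement membership in $\cC^3$ by computing $D$, $D^2$, $D^3$ directly from Definition~\ref{def:derivative}. Check that each iterate is defined, that is, that no run has length greater than $2$ at any stage.
\item Implement the shuffle-square test by the exact residual recognizer of Listing~\ref{lst:residual-test}, whose correctness is Proposition~\ref{prop:residual-test} together with the lossless pruning of Remark~\ref{rem:residual-pruning}. Only even-length, even-Parikh candidates need the full test, by Lemma~\ref{lem:even-necessary}.
\item Run the recurrence from $\cB_0^{(3)}=\{\eps\}$ and record the level sizes. The expected outcome, announced in the introduction, is a tree with $175$ nodes that dies out, so that $\cB_{24}^{(3)}=\emptyset$.
\end{enumerate}
Emptiness of $\cB_{24}^{(3)}$ says precisely that every $\cC^3$-word of length $24$ has a nonempty shuffle-square prefix.

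For suffixes I would not run a second search. Let $w\in\cC^3$ have length $24$. By Lemma~\ref{lem:smooth-closure}(ii), $\rev(w)\in\cC^3$, so by the prefix statement $\rev(w)$ has a nonempty shuffle-square prefix $p$. Then $\rev(p)$ is a suffix of $w$, and it is a shuffle square by Lemma~\ref{lem:closure-shuffle}(ii). One could also reduce the search by complementation, using Lemma~\ref{lem:smooth-closure}(iii) and Lemma~\ref{lem:closure-shuffle}(iii), and growing only words that begin with $\word{1}$. This is a convenience, not a necessity.

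The main obstacle is not conceptual but the reliability of the finite computation. The argument stands or falls with the claim that the tree has no node at depth $24$. I would therefore cross-check it independently. First, the sizes $|\cB_n^{(3)}|$ should be consistent with a brute-force enumeration of all $\cC^3$-words of each length $n\leqslant 24$, filtered by direct testing of all prefixes. Second, the recognizer should be validated against the run-capacity recognizer of Listing~\ref{lst:run-test} (Proposition~\ref{prop:run-recognizer}) on every even-Parikh word encountered. A further sanity check is that the known non-shuffle-square prefixes of $\K$, namely $P_4$ and $P_8$, lie in the tree, while $P_{16}$ is cut off there, as Example~\ref{ex:first-decomposition} requires.
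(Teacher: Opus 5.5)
Your proposal matches the paper's proof: the paper runs Listing~\ref{lst:bad-prefix-tree} with $k=3$ and $L=24$, obtains the level vector of a $175$-node tree with $\cB_{24}^{(3)}=\varnothing$, and gets the suffix assertion by reversal via Lemmas~\ref{lem:smooth-closure}(ii) and~\ref{lem:closure-shuffle}(ii), exactly as you do. Your additional cross-checks (brute-force enumeration, comparison with the run-capacity recognizer, the $P_4,P_8,P_{16}$ sanity check) are sensible safeguards for the computation, but the argument does not need them.
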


\begin{proof}
Listing~\ref{lst:bad-prefix-tree}, with $k=3$ and $L=24$, gives precisely
\begin{align*}
(|\cB_n^{(3)}|)_{n=0}^{24}
={}&(1,2,2,4,4,6,6,6,6,8,10,10,10,14,14,18,10,10,8,10,4,4,4,4,0).
\end{align*}
Thus the~complete tree has only~$175$ nodes and $\cB_{24}^{(3)}=\varnothing$, proving the~prefix assertion. The~suffix assertion follows by~reversal, using Lemmas~\ref{lem:smooth-closure}(ii) and~\ref{lem:closure-shuffle}(ii).

For~sharpness, put
\begin{align}\label{eq:A23}
 A=\word{12211211212212211211221}.
\end{align}
The~last nonempty level is
\begin{align}\label{eq:bad23}
 \cB_{23}^{(3)}
 =\{A,\rev(A),\overline A,\overline{\rev(A)}\}.
\end{align}
All four words are smooth, so the~local constant~$24$ cannot be decreased even in~the~smooth subclass.
\end{proof}

For~the~finite induction base, Listing~\ref{lst:parity-levels} gives Table~\ref{tab:C3-base}. The~third column counts all even-Parikh words. The~fourth records how many of~them the~exact recognizer rejects.

\begin{table}[ht]
\centering
\caption{The~complete finite base for~the~$C^3$ parity theorem}
\label{tab:C3-base}
\small
\begin{tabular}{@{}rrrr@{\qquad}rrrr@{}}
\toprule
$n$ & $|\cC_n^3|$ & even & rejected & $n$ & $|\cC_n^3|$ & even & rejected\\
\midrule
18 & 284   & 134   & 0 & 30 & 3792  & 1856  & 0\\
20 & 440   & 232   & 0 & 32 & 5838  & 2970  & 0\\
22 & 674   & 324   & 0 & 34 & 8998  & 4438  & 0\\
24 & 1046  & 538   & 0 & 36 & 13822 & 6990  & 0\\
26 & 1600  & 778   & 0 & 38 & 21332 & 10550 & 0\\
28 & 2470  & 1266  & 0 & 40 & 32740 & 16524 & 0\\
\midrule
\multicolumn{8}{c}{$\displaystyle\text{total number of~even-Parikh words tested}=46600$}\\
\bottomrule
\end{tabular}
\end{table}

\begin{proof}[Proof of~Theorem~\ref{thm:C3-parity}]
Necessity is Lemma~\ref{lem:even-necessary}. For~sufficiency, use strong induction over the~even lengths. The~twelve lengths $18,20,\ldots,40$ are the~complete base in~Table~\ref{tab:C3-base}. If $|w|\geqslant42$, the~length-$24$ suffix of~$w$ belongs to~$\cC^3$ by Lemma~\ref{lem:smooth-closure}(i). Lemma~\ref{lem:C3-local} gives it a~nonempty shuffle-square suffix~$U$ with $2\leqslant|U|\leqslant24$. Write $w=vU$. Both $w$ and~$U$ have even Parikh vector, hence so does~$v$. Moreover, $v\in\cC^3$ and $|v|\geqslant18$. The~induction hypothesis makes $v$ a~shuffle square, and Lemma~\ref{lem:closure-shuffle}(i) then makes $w=vU$ a~shuffle square.

For~sharpness, consider
\begin{align}\label{eq:C16}
 C=\word{1221121121221221}.
\end{align}
It has eight occurrences of~each letter and derivative tower
\begin{align*}
 C\mapsto\word{221211212}\mapsto\word{211211}
  \mapsto\word{212}\mapsto\word{1}\mapsto\eps,
\end{align*}
so it is smooth. The~exact residual test rejects~$C$. In~fact, $C\in\cB_{16}^{(3)}$. Hence the~threshold cannot be lowered.

Finally, the~complete calculation at~the~even lengths at~most~$16$ gives exactly~$34$ nonempty even-Parikh $C^3$-words rejected by~the~recognizer. Their distribution is
\begin{align}\label{eq:C3-exception-counts}
 (n,e_n)=(4,2),(8,6),(10,4),(12,10),(14,4),(16,8).
\end{align}
Direct iteration of~the~finite derivative verifies that every one of these~$34$ words is smooth.

Recurrence~\eqref{eq:Ck-bad-tree}, Listings~\ref{lst:bad-prefix-tree} and~\ref{lst:parity-levels}, the~full bad-tree vector, and Table~\ref{tab:C3-base} give a~finite verification for every exhaustive step used in~this proof.
\end{proof}

The~single $C^3$ proof contains the~smooth case and every intermediate finite differentiability class. For~smooth words, the~following criterion and exceptional set were also obtained independently in~\cite{Kilpatrick}.

\begin{corollary}\label{cor:stable-regime}
For~every $k\geqslant3$, an~even-Parikh $\cC^k$-word of~length greater than~$16$ is a~shuffle square. The~threshold~$16$ and the~set of exactly~$34$ nonempty exceptions are the~same for~all $k\geqslant3$ and for~smooth words.
\end{corollary}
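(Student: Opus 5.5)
This corollary follows from Theorem~\ref{thm:C3-parity} together with the inclusions $\cC^\infty\subseteq\cC^k\subseteq\cC^3$ for $k\geqslant3$; no new computation is needed. Both inclusions come straight from Definition~\ref{def:Ck}. If $D^j(w)$ is defined for all $j\leqslant k$, then in particular it is defined for all $j\leqslant3$. Definition~\ref{def:smooth} expresses $\cC^\infty$ as the intersection of all $\cC^k$, which gives the other inclusion.

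For the first assertion, fix $k\geqslant3$ and let $w\in\cC^k$ have even Parikh vector and $|w|>16$. Then $w\in\cC^3$, so Theorem~\ref{thm:C3-parity} makes $w$ a shuffle square. The same argument applies verbatim to smooth words.

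For the exceptional set, let $\mathcal E_k$ denote the set of nonempty even-Parikh $\cC^k$-words that are not shuffle squares, and let $\mathcal E_\infty$ be the analogous set for smooth words. The inclusions of classes give
\begin{align*}
\mathcal E_\infty\subseteq\mathcal E_k\subseteq\mathcal E_3 .
\end{align*}
By Theorem~\ref{thm:C3-parity}, $\mathcal E_3$ has exactly $34$ elements, and all of them are smooth. Hence $\mathcal E_3\subseteq\mathcal E_\infty$, and the chain collapses to equalities. So every class $\cC^k$ with $k\geqslant3$, and the smooth class, has the same $34$ exceptions.

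Sharpness of the threshold $16$ also transfers. The witness $C$ from \eqref{eq:C16} is smooth, even-Parikh, of length $16$, and not a shuffle square, so $C\in\mathcal E_\infty\subseteq\mathcal E_k$ for every $k\geqslant3$. Therefore the bound $16$ cannot be lowered in any of these classes.

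The only real content is the smoothness of all $34$ $C^3$-exceptions, and that was already established in the proof of Theorem~\ref{thm:C3-parity} by iterating the finite derivative. Beyond citing that fact, the argument is bookkeeping.
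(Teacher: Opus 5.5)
Your proposal is correct and follows the paper's own reasoning: the inclusions $\cC^\infty\subseteq\cC^k\subseteq\cC^3$ carry Theorem~\ref{thm:C3-parity} down to every $\cC^k$ with $k\geqslant3$, and the smoothness of all $34$ $C^3$-exceptions collapses the exceptional sets to a single set. Sharpness of $16$ transfers in the same way, because the smooth witness $C$ lies in every one of these classes.
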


\begin{corollary}\label{cor:factorwise-parity}
Let $X$ be either a~finite $\cC^k$-word for~some $k\geqslant3$, or a right-infinite smooth binary word. Every factor~$u$ of~$X$ with $|u|>16$ is a~shuffle square if and only if its Parikh vector is even.
\end{corollary}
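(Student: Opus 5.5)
The statement is Corollary~\ref{cor:factorwise-parity}: every factor~$u$ of~$X$ with $|u|>16$ is a~shuffle square if and only if its Parikh vector is even. I would reduce it to Theorem~\ref{thm:C3-parity}. The only work is to check that every relevant factor~$u$ is a~$\cC^3$-word. Once this is done, the theorem applies verbatim, since $|u|>16$.

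First case: $X$ is a~finite $\cC^k$-word with $k\geqslant3$. By Lemma~\ref{lem:smooth-closure}(i), every factor~$u$ of~$X$ lies in~$\cC^k$. Since $k\geqslant 3$, Definition~\ref{def:Ck} gives $\cC^k\subseteq\cC^3$: the condition that $D^j(u)$ be defined for $j\leqslant k$ contains the condition for $j\leqslant 3$. So $u\in\cC^3$.

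Second case: $X$ is a~right-infinite smooth word. By Proposition~\ref{prop:infinite-factors-smooth}, every finite factor~$u$ of~$X$ is a~finite smooth word. Thus $u\in\cC^\infty=\bigcap_k\cC^k\subseteq\cC^3$.

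In both cases $u\in\cC^3$ and $|u|>16$. Necessity of the even Parikh vector is Lemma~\ref{lem:even-necessary}, which is also contained in Theorem~\ref{thm:C3-parity}. Sufficiency is the sufficiency direction of Theorem~\ref{thm:C3-parity}, or equivalently Corollary~\ref{cor:stable-regime}.

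There is no real obstacle. The only point needing care is the infinite case, where one must pass from infinite smoothness (Definition~\ref{def:infinite-smooth}) to finite smoothness of factors. That step is exactly the endpoint argument already carried out in Proposition~\ref{prop:infinite-factors-smooth}, so it can be quoted rather than reproved.
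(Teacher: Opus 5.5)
Your proposal is correct and follows the paper's own proof. Lemma~\ref{lem:smooth-closure}(i) places $u$ in $\cC^k\subseteq\cC^3$ in the finite case, and Proposition~\ref{prop:infinite-factors-smooth} makes $u$ smooth in the right-infinite case; Theorem~\ref{thm:C3-parity} together with Lemma~\ref{lem:even-necessary} then finishes.
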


\begin{proof}
In~the~finite case, Lemma~\ref{lem:smooth-closure}(i) makes~$u$ a $\cC^k$-word and hence a~$C^3$-word. In~the~right-infinite case, Proposition~\ref{prop:infinite-factors-smooth} makes~$u$ smooth. Apply Theorem~\ref{thm:C3-parity} and Lemma~\ref{lem:even-necessary}.
\end{proof}

\subsection{The~class \texorpdfstring{$C^2$}{C2}}

At~the~next level, the~local threshold increases from~$24$ to~$40$.

\begin{lemma}\label{lem:C2-local}
Every $C^2$-word of~length~$40$ has a~nonempty shuffle-square prefix and a~nonempty shuffle-square suffix. The~constant~$40$ is optimal.
\end{lemma}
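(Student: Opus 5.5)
The plan mirrors the proof of Lemma~\ref{lem:C3-local}. I would run Listing~\ref{lst:bad-prefix-tree} with $k=2$ and $L=40$, which implements recurrence~\eqref{eq:Ck-bad-tree} exactly. The recurrence is valid because a word with no nonempty shuffle-square prefix has all its prefixes in the same position. Factoriality, Lemma~\ref{lem:smooth-closure}(i), guarantees that every prefix of a $C^2$-word is again a $C^2$-word. So the tree contains every $C^2$-word avoiding shuffle-square prefixes.

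Membership of each candidate $w\word{a}$ in $\cC^2$ is checked by computing $D$ twice according to Definition~\ref{def:derivative}. The shuffle-square test is the exact recognizer of Proposition~\ref{prop:residual-test}, or the run version of Proposition~\ref{prop:run-recognizer}. Only even-Parikh candidates need the full test, by Lemma~\ref{lem:even-necessary}.

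The output should be the full vector $(|\cB_n^{(2)}|)_{n=0}^{40}$, with total node count $823$ as announced in the introduction. Its two key features are $\cB_{40}^{(2)}=\varnothing$ and $\cB_{39}^{(2)}\neq\varnothing$. The first proves the prefix assertion. The suffix assertion then follows by reversal: if $w\in\cC^2$, then $\rev(w)\in\cC^2$ by Lemma~\ref{lem:smooth-closure}(ii). A shuffle-square prefix of $\rev(w)$ reverses to a shuffle-square suffix of $w$ by Lemma~\ref{lem:closure-shuffle}(ii).

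For optimality I would exhibit an explicit element $A'\in\cB_{39}^{(2)}$. This is a $C^2$-word of length $39$ with no nonempty shuffle-square prefix, so the prefix statement fails at length $39$. The level $\cB_{39}^{(2)}$ should be closed under complement, since complementation preserves $\cC^2$ and shuffle squares. So I would list it up to complement, and check whether reversal also maps it into itself.

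Strictly speaking, optimality of the combined statement only needs one length-$39$ word lacking a shuffle-square prefix, and $A'$ supplies it. For a cleaner statement I would also note that $\rev(A')$ witnesses the suffix version. It is worth recording whether $A'$ lies outside $\cC^3$, since it must: Lemma~\ref{lem:C3-local} forces every $C^3$ bad prefix to have length at most $23$.

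The main obstacle is computational scale and its certification, not logic. $C^2$ is much larger than $C^3$, and candidates near length $40$ have many residual states. The residual search should use the lossless pruning of Remark~\ref{rem:residual-pruning}, which is sufficient because the search depends only on the pruned tree, not on all of $\cC_{40}^2$. For the longer words I would use the run-capacity recognizer of Listing~\ref{lst:run-test}. To make the result reproducible I would display the level counts and cross-check them with an independent implementation.
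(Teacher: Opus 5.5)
Your proposal is correct and is essentially the paper's own proof: run Listing~\ref{lst:bad-prefix-tree} with $k=2$ and $L=40$, read off $\cB_{40}^{(2)}=\varnothing$ from the complete $823$-node tree, obtain the suffix assertion by reversal via Lemmas~\ref{lem:smooth-closure}(ii) and~\ref{lem:closure-shuffle}(ii), and prove optimality with an explicit member of $\cB_{39}^{(2)}$. The paper's witness is $\word{122}(\word{112})^2(\word{122})^3(\word{112})^3(\word{122})^3\word{112}$, and as you predicted it lies in $\cC^2\setminus\cC^3$, since its second derivative contains $\word{1111}$.
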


\begin{proof}
Listing~\ref{lst:bad-prefix-tree}, with $k=2$ and $L=40$, gives
\begin{align*}
(|\cB_n^{(2)}|)_{n=0}^{40}
={}&(1,2,2,4,4,6,6,6,6,8,10,14,12,18,18,22,16,20,20,26,22,\\
&30,26,32,28,38,38,50,40,52,36,46,36,44,22,26,10,10,8,8,0).
\end{align*}
Thus the~complete tree has~$823$ nodes and $\cB_{40}^{(2)}=\varnothing$. This proves the~prefix assertion, and reversal gives the~suffix assertion. Finally,
\begin{align}\label{eq:C2-bad39}
 \word{122112112122122122112112112122122122112}=\word{122}(\word{112})^2(\word{122})^3(\word{112})^3(\word{122})^3\word{112}
\end{align}
belongs to~$\cB_{39}^{(2)}$, proving optimality.

The~displayed complete vector, together with recurrence~\eqref{eq:Ck-bad-tree} and Listing~\ref{lst:bad-prefix-tree}, is a~finite reproducible verification for this local lemma.
\end{proof}

The~local lemma also gives a~bounded factorization at the~$C^2$ level.

\begin{corollary}\label{cor:C2-factorization}
Every finite $C^2$-word~$w$ has a~factorization
\begin{align*}
 w=S_1S_2\cdots S_k r,
\end{align*}
obtained by~successively removing the~shortest nonempty shuffle-square prefix, where every $S_i$ is a~$C^2$ shuffle square satisfying $2\leqslant|S_i|\leqslant40$, and the~terminal remainder satisfies $r\in\cC^2$ and $|r|\leqslant39$. If $k\geqslant1$, then $S_1\cdots S_k$ is a~shuffle square. Both the~remainder bound~$39$ and the~block bound~$40$ are sharp.
\end{corollary}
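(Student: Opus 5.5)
The plan is a greedy construction driven by Lemma~\ref{lem:C2-local}. Start with $w_0=w$. As long as the current word $w_{i-1}$ has a nonempty shuffle-square prefix, let $S_i$ be the shortest one and write $w_{i-1}=S_iw_i$. Each step strictly shortens the word, so the process stops. It stops at a word $r=w_k$ that has no nonempty shuffle-square prefix.

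By Lemma~\ref{lem:smooth-closure}(i), every $w_i$ and every $S_i$ is a factor of~$w$, hence lies in $\cC^2$. The bounds then follow from the local lemma.
\begin{itemize}
\item[$\bullet$] Since $r\in\cC^2$ has no nonempty shuffle-square prefix, $r\in\cB_{|r|}^{(2)}$. Because $\cB_{40}^{(2)}=\varnothing$, and $\cB$ is prefix-closed by recurrence~\eqref{eq:Ck-bad-tree}, we get $|r|\leqslant39$.
\item[$\bullet$] For the blocks, suppose $|w_{i-1}|\geqslant40$. Its length-$40$ prefix is a $C^2$-word, so it has a nonempty shuffle-square prefix, and hence $|S_i|\leqslant40$.
\item[$\bullet$] Suppose instead $|w_{i-1}|<40$. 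Then trivially $|S_i|\leqslant|w_{i-1}|<40$.
\end{itemize}
The lower bound $|S_i|\geqslant2$ is Lemma~\ref{lem:even-necessary}. If $k\geqslant1$, iterating Lemma~\ref{lem:closure-shuffle}(i) shows that $S_1\cdots S_k$ is a shuffle square.

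For sharpness of~$39$, take $w$ to be the word~\eqref{eq:C2-bad39}. It lies in $\cB_{39}^{(2)}$, so $k=0$ and $r=w$ has length~$39$.

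Sharpness of the block bound~$40$ is the main obstacle. It needs a $C^2$-word whose shortest nonempty shuffle-square prefix has length exactly~$40$, that is, a one-letter extension of some $u\in\cB_{39}^{(2)}$ which lies in $\cC^2$ and is a shuffle square. I would first try the extensions $u\word{1}$ and $u\word{2}$ of the word~\eqref{eq:C2-bad39} and of the other seven members of $\cB_{39}^{(2)}$.
\begin{itemize}
\item[$\bullet$] Only an even-Parikh extension can work, by Lemma~\ref{lem:even-necessary}.
\item[$\bullet$] The chosen extension must pass the $\cC^2$ membership check.
\item[$\bullet$] It must then be accepted by Listing~\ref{lst:residual-test}.
\end{itemize}
Since every such extension has no shuffle-square prefix of length at most~$39$, it cannot lie in $\cB_{40}^{(2)}=\varnothing$ while being in $\cC^2$. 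So any $C^2$ extension is automatically a shuffle square. The finite check therefore reduces to exhibiting one member of $\cB_{39}^{(2)}$ with a $C^2$ one-letter extension.

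I expect such an extension to exist, because otherwise the tree would already die at level~$39$ for membership reasons rather than shuffle-square reasons. That is plausible but must be checked explicitly. Once the extension $S$ is found, the factorization of $w=S$ itself has $S_1=S$ of length~$40$.
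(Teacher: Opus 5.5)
You match the paper's proof on the greedy construction, the closure and bound arguments, the concatenation claim, and the sharpness of~$39$ via~\eqref{eq:C2-bad39}. The gap is in the sharpness of the block bound~$40$. You correctly observe that any one-letter extension of a word in $\cB_{39}^{(2)}$ that lies in $\cC^2$ is automatically a shuffle square with no proper nonempty shuffle-square prefix, because $\cB_{40}^{(2)}=\varnothing$. But you only conjecture that such an extension exists. Your heuristic is not an argument. Nothing in recurrence~\eqref{eq:Ck-bad-tree} rules out all eight members of $\cB_{39}^{(2)}$ being dead ends for membership in~$\cC^2$. In that case the tree would die at level~$40$ for reasons unrelated to shuffle squares, and no block of length~$40$ would arise this way. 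As written, the final claim of the corollary is unproved.

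The paper closes this with its separate enumeration of~$\mathcal M$. Table~\ref{tab:minimal-suffixes} contains eight minimal suffix squares of length~$40$, and their reversals are minimal shuffle-square prefixes. You can instead finish your route by hand, using only the data of Lemma~\ref{lem:C2-local}.

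Let $u$ be the word~\eqref{eq:C2-bad39}. Its last three runs are $\word{22}\,\word{11}\,\word{2}$, so $D(u)$ ends in $\word{22}$.
\begin{itemize}
\item Appending $\word{2}$ creates three consecutive runs of length~$2$. So $D(u\word{2})$ ends in $\word{222}$, its derivative is undefined, and $u\word{2}\notin\cC^2$.
\item Appending $\word{1}$ gives $D(u\word{1})=D(u)\word{1}$. All runs of this word have length at most~$2$, and $D^2(u\word{1})=D^2(u)$, so $u\word{1}\in\cC^2$.
\end{itemize}
By your own observation, $u\word{1}$ is then a $C^2$ shuffle square of length~$40$ whose greedy factorization is the single block $S_1=u\word{1}$.

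More generally, $\cC^k$ is right-extendable. A word ending in a run of length~$2$ has an extension with the same derivative. A word with at least two runs ending in a run of length~$1$ has extensions with derivatives $D(w)\word{1}$ and $D(w)\word{2}$, so induction on~$k$ applies.
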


\begin{proof}
Repeatedly remove the~shortest nonempty shuffle-square prefix. Every intermediate remainder and every removed block is a~$C^2$ factor by Lemma~\ref{lem:smooth-closure}(i). Lemma~\ref{lem:C2-local} bounds each chosen block by~$40$ and the~terminal remainder by~$39$. Concatenation gives the~last assertion about $S_1\cdots S_k$. The~word in
\eqref{eq:C2-bad39} proves sharpness of~the~remainder bound.
Table~\ref{tab:minimal-suffixes} contains eight minimal suffix squares of~length~$40$. Their reversals are minimal shuffle-square prefixes, proving sharpness of~the~block bound.
\end{proof}

To~obtain the~sharp global parity threshold, let~$\mathcal M$ be the set of~$C^2$ shuffle squares having no proper nonempty shuffle-square suffix. Lemma~\ref{lem:C2-local} implies
\begin{align}\label{eq:M-bound}
 |s|\leqslant40\qquad(s\in\mathcal M),
\end{align}
because the~last~$40$ letters of~a~longer member would contain a~proper shuffle-square suffix. A~$C^2$ extension of~a~word in $\cB_{n-1}^{(2)}$ is a~minimal shuffle-square prefix exactly when the extension itself is a~shuffle square. Reversal converts minimal prefixes into minimal suffixes. This enumerates~$\mathcal M$ and gives $|\mathcal M|=288$, with the~length distribution in Table~\ref{tab:minimal-suffixes}.

\begin{table}[ht]
\centering
\caption{Minimal $C^2$ shuffle-square suffixes}
\label{tab:minimal-suffixes}
\small
\begin{tabular}{@{}rr@{\qquad}rr@{}}
\toprule
length & number & length & number\\
\midrule
 2 & 2  & 22 & 16\\
 4 & 2  & 24 & 18\\
 6 & 4  & 26 & 12\\
 8 & 2  & 28 & 32\\
10 & 2  & 30 & 36\\
12 & 6  & 32 & 30\\
14 & 10 & 34 & 36\\
16 & 14 & 36 & 26\\
18 & 8  & 38 & 10\\
20 & 14 & 40 & 8\\
\midrule
\multicolumn{4}{c}{total: $288$}\\
\bottomrule
\end{tabular}
\end{table}

\begin{lemma}\label{lem:C2-reduction}
Let $w\in\cC^2$ have even Parikh vector, suppose that $w$ is not a shuffle square, and assume $|w|\geqslant40$. Then $w=us$ for~some $s\in\mathcal M$, where $u\in\cC^2$ also has even Parikh vector and is not a~shuffle square.
\end{lemma}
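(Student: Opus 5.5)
Since $|w|\geqslant40$, I look at the length-$40$ suffix of~$w$. By Lemma~\ref{lem:smooth-closure}(i) it is a~$C^2$-word, so Lemma~\ref{lem:C2-local} gives it a~nonempty shuffle-square suffix. Among all nonempty shuffle-square suffixes of~$w$ of length at most~$40$, I take the \emph{shortest} one and call it~$s$. It is a~factor of~$w$, hence a~$C^2$ shuffle square.

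Next I check that $s$ has no proper nonempty shuffle-square suffix. Such a suffix would be a~shuffle-square suffix of~$w$ shorter than~$s$, which contradicts minimality. Hence $s\in\mathcal M$, and the bound~\eqref{eq:M-bound} is consistent with $|s|\leqslant40$. Now write $w=us$. Then $u$ is a~prefix of~$w$, so Lemma~\ref{lem:smooth-closure}(i) gives $u\in\cC^2$. By Lemma~\ref{lem:even-necessary}, $s$ has even Parikh vector, and $w$ has even Parikh vector by hypothesis. Subtracting, the Parikh vector of~$u$ is even.

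It remains to show that $u$ is not a~shuffle square. If $u$ is empty, then $w=s$ is a~shuffle square, contrary to hypothesis. If $u$ is a~nonempty shuffle square, then Lemma~\ref{lem:closure-shuffle}(i) makes $w=us$ a~shuffle square, again a~contradiction. Note that $u$ may be empty only in that excluded case, so the conclusion ``$u$ is not a shuffle square'' is meaningful: the empty word is not a shuffle square by Definition~\ref{def:shuffle-square}, and this case is ruled out anyway.

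There is no real obstacle here. The only care needed is to choose $s$ as the shortest shuffle-square suffix, so that membership in~$\mathcal M$ is automatic, and to use the length-$40$ suffix so that Lemma~\ref{lem:C2-local} applies. The finite enumeration of~$\mathcal M$ in Table~\ref{tab:minimal-suffixes} is not needed for this lemma. It only matters for later computations that iterate the reduction.
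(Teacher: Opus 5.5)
Your proof is correct and follows essentially the same route as the paper. You take a shortest shuffle-square suffix, which exists with length at most $40$ by Lemma~\ref{lem:C2-local}, and get membership in $\mathcal M$ from minimality; factoriality, parity subtraction and Lemma~\ref{lem:closure-shuffle}(i) then show that $u$ is a nonempty even-Parikh $C^2$-word that is not a shuffle square.
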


\begin{proof}
By~Lemma~\ref{lem:C2-local}, the~length-$40$ suffix factor of~$w$ has a nonempty shuffle-square suffix. Choose a~shortest nonempty shuffle-square suffix~$s$ of~$w$. Then $|s|\leqslant40$ and $s\in\mathcal M$. Write $w=us$. Since $w$ is not a~shuffle square, $s\ne w$, so~$u$ is nonempty. Factoriality gives $u\in\cC^2$. Both $w$ and~$s$ have even Parikh vector, hence so does~$u$. If $u$ were a~shuffle square, then $w=us$ would be one by~Lemma~\ref{lem:closure-shuffle}(i), a contradiction.
\end{proof}

The~reduction makes the~classification finite. Let $E_0$ be the~set of nonempty even-Parikh $C^2$-words of~length below~$40$ which are not shuffle squares, and define
\begin{align}\label{eq:C2-layers}
 E_1&=\{us:u\in E_0,\ s\in\mathcal M,\ |us|\geqslant40,\
 us\in\cC^2,\ us\notin\SSq\},\\
 E_{j+1}&=\{us:u\in E_j,\ s\in\mathcal M,\ us\in\cC^2,\
 us\notin\SSq\}\qquad(j\geqslant1).
\end{align}
Listing~\ref{lst:C2-extension} constructs precisely these sets.

\begin{lstlisting}[style=pseudocodestyle,caption={The finite extension verification for $C^2$},label={lst:C2-extension}]
E[0] <- all nonempty even-Parikh C^2 non-shuffle-squares of length < 40
B <- BAD-PREFIX-TREE(2, 40)
M <- empty set
for n <- 1,...,40 and u in B[n-1] and (*@$\word{a}$@*) in {(*@$\word{1}$@*),(*@$\word{2}$@*)} do
  v <- u*(*@$\word{a}$@*)
  if v is in C^2 and v is a shuffle square then
    insert reverse(v) into M

for j <- 0,1 do
  CANDIDATES <- empty set
  for u in E[j] and s in M do
    if u*s is in C^2 and (j > 0 or length(u*s) >= 40) then
      insert u*s into CANDIDATES
  E[j+1] <- {w in CANDIDATES : SHUFFLE-SQUARE(w) is false}
\end{lstlisting}
  
  The~exact results are summarized in~Table~\ref{tab:C2-computation}.
\begin{table}[htbp]
\centering
\caption{Summary of~the~exhaustive computation for~the~class~$C^2$}
\label{tab:C2-computation}
\begin{tabular}{@{}lr@{}}
\toprule
Quantity & Value\\
\midrule
$|E_0|$ & 208\\
Distinct $C^2$ candidates for~$E_1$ & 13\,752\\
$|E_1|$ & 4\\
Distinct $C^2$ candidates for~$E_2$ & 200\\
$|E_2|$ & 0\\
\bottomrule
\end{tabular}
\end{table}

The~four words in~$E_1$ are $X_{46},\overline{X}_{46},X_{48},\overline{X}_{48}$, where
\begin{align}
\label{eq:X46}X_{46}&=\word{1221121122122122112112112112112212212211211221}\\ \nonumber&=\word{122}(\word{112})^2(\word{212})^2\word{21}(\word{121})^4\word{12}(\word{212})^2(\word{211})^2\word{221},
  \\
\label{eq:X48}X_{48}&=\word{122112112122122121121121121121121221221211211221}\\ 
\nonumber&=\word{122}(\word{112})^2(\word{122})^2(\word{121})^6(\word{221})^2(\word{211})^2\word{221}.
\end{align}
Notice that both representatives are palindromes.

\begin{theorem}\label{thm:C2-parity}
Let $w\in\cC^2$ and $|w|>48$. Then $w$ is a~shuffle square if and only if its Parikh vector is even. The~bound~$48$ is sharp. Moreover, there are exactly~$212$ nonempty even-Parikh $C^2$-words which are not shuffle squares.
\end{theorem}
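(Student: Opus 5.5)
Necessity of the parity condition is Lemma~\ref{lem:even-necessary}, so the plan concerns sufficiency, sharpness, and the count. I would prove that every nonempty even-Parikh $C^2$ non-shuffle-square lies in $E_0\cup E_1\cup E_2\cup\cdots$. Then the computation in Table~\ref{tab:C2-computation} finishes the argument. Let $w$ be such a word. If $|w|<40$, then $w\in E_0$ by definition.

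For $|w|\geqslant40$, apply Lemma~\ref{lem:C2-reduction} repeatedly. This produces a chain
\begin{align*}
 w=u_0,\quad u_{i}=u_{i+1}s_{i+1},\qquad s_{i}\in\mathcal M.
\end{align*}
Each $u_i$ is a nonempty even-Parikh $C^2$ non-shuffle-square. Since the lengths strictly decrease, the chain reaches a first index $t$ with $|u_t|<40$. Then $u_t\in E_0$. The word $u_{t-1}=u_ts_t$ has length $\geqslant40$, lies in $\cC^2$, and is not a shuffle square, so $u_{t-1}\in E_1$. Inductively $u_{t-j}\in E_j$, since each $u_{t-j}$ is a $C^2$ non-shuffle-square of the form $us$ with $u\in E_{j-1}$ and $s\in\mathcal M$. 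Hence $w\in E_t$.

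Table~\ref{tab:C2-computation} gives $E_2=\varnothing$, and the recursion then forces $E_j=\varnothing$ for all $j\geqslant2$. Therefore every exception lies in $E_0\cup E_1$. There are $208+4=212$ exceptions, and $E_1=\{X_{46},\overline X_{46},X_{48},\overline X_{48}\}$. The longest exception has length $48$, so every even-Parikh $C^2$-word of length $>48$ is a shuffle square. For sharpness I would exhibit $X_{48}$. Its membership in $\cC^2$ and its rejection by the recognizer are part of the computation. A direct check is also possible: its derivative lies in $\{\word 1,\word 2\}^*$ with runs $\leqslant2$, and Listing~\ref{lst:residual-test} rejects it.

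The main obstacle is making sure the finite computation is genuinely complete. Three points need care. First, $E_0$ must be the full set of even-Parikh non-shuffle-squares below length $40$; I would obtain it via Listing~\ref{lst:parity-levels} with $k=2$, $L=39$. Second, $\mathcal M$ must contain every minimal suffix square. I would justify this by~\eqref{eq:M-bound} together with the bad-prefix tree of Lemma~\ref{lem:C2-local} and reversal. Third, the candidate sets for $E_1$ and $E_2$ must be enumerated over all pairs $(u,s)$ with the $C^2$ filter applied.

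I would also check explicitly that the $E_j$ are disjoint from $E_0$ and from each other as sets of words. An overlap would only cause double-counting of elements, not missing ones. Still, the count of $212$ requires the final union to have distinct elements. Since every element of $E_1$ has length $\geqslant40$ and every element of $E_0$ has length $<40$, no word is counted twice.
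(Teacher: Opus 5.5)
Your proposal is correct and is essentially the paper's proof. Lemma~\ref{lem:C2-reduction} reduces every exception of length at least $40$ to a word of $E_0$, placing it in some layer $E_t$, and the computed values $|E_0|=208$, $|E_1|=4$, $E_2=\varnothing$ then give exactly $212$ exceptions, with $X_{48}$ and $\overline X_{48}$ showing sharpness; your explicit indexing $u_{t-j}\in E_j$ and the length-based disjointness of $E_0$ and $E_1$ just spell out steps the paper leaves implicit.
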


\begin{proof}
Only sufficiency requires proof. Listing~\ref{lst:parity-levels} gives all~$208$ exceptions of~length below~$40$. Lemma~\ref{lem:C2-reduction} shows that every longer exception can be reduced, by~deleting a~member of~$\mathcal M$, to~a~shorter exception. Repeating eventually reaches~$E_0$, so every exception of~length at least~$40$ belongs to~one of~the~layers in~\eqref{eq:C2-layers}. Listing~\ref{lst:C2-extension} gives $|E_1|=4$ and $E_2=\varnothing$, so there is no further layer. Thus the~complete exceptional set is $E_0\cup E_1$, of~cardinality $208+4=212$. Its longest members are $X_{48}$ and~$\overline X_{48}$, proving both the criterion above length~$48$ and sharpness.

The~finite verification consists of the~proved reduction, the~complete construction in~Listing~\ref{lst:C2-extension}, and the~five exact cardinalities displayed above. No unbounded search is being inferred from a~cutoff.
\end{proof}

For~completeness, Table~\ref{tab:C2-exceptions} gives the~number of exceptions at~every length where it is nonzero.

\begin{table}[ht]
\centering
\caption{Nonempty even-Parikh $C^2$-words which are not shuffle squares}
\label{tab:C2-exceptions}
\small
\begin{tabular}{@{}rr@{\qquad}rr@{\qquad}rr@{}}
\toprule
$n$ & $e_n$ & $n$ & $e_n$ & $n$ & $e_n$\\
\midrule
 4 & 2  & 18 & 16 & 30 & 10\\
 8 & 6  & 20 & 16 & 32 & 24\\
10 & 4  & 22 & 12 & 34 & 12\\
12 & 10 & 24 & 20 & 38 & 8\\
14 & 6  & 26 & 36 & 46 & 2\\
16 & 8  & 28 & 18 & 48 & 2\\
\midrule
\multicolumn{6}{c}{$\displaystyle\sum_n e_n=212$}\\
\bottomrule
\end{tabular}
\end{table}

\subsection{The~class \texorpdfstring{$C^1$}{C1}}

The~qualitative picture changes at~the~class studied by~Grytczuk, Pawlik, and Ruci\'nski in \cite{GrytczukPawlikRucinskiProceedings, GrytczukPawlikRucinskiPreprint}: after relabelling the~alphabet, their doubly binary words are precisely the~$C^1$-words. Their odd-ABBA words $(\word{2112})^{2j+1}$ are even-Parikh non-shuffle-squares of~unbounded length
\cite{GrytczukPawlikRucinskiPreprint}. Hence no global threshold can
make evenness sufficient throughout~$C^1$.

There is likewise no finite local prefix threshold. The~right-infinite word
\begin{align}\label{eq:C1-infinite-ray}
 Z=\word{12}(\word{1122})^\omega
\end{align}
has every finite prefix in~$\cC^1$, and every one of~its even-length prefixes has both letter multiplicities odd. Consequently, no nonempty prefix of~$Z$ is a~shuffle square. Nevertheless, even-Parikh $C^1$-words admit a~sharp finite reduction.

\begin{theorem}\label{thm:C1-reduction}
Let $w\in\cC^1$ have even Parikh vector and suppose that it is not a shuffle square. If $|w|\geqslant36$, then $w$ has a~proper nonempty shuffle-square prefix and a~proper nonempty shuffle-square suffix. The maximum length of~an~even-Parikh $C^1$-word with no such prefix is~$34$, so~$36$ is the~sharp threshold among admissible (even) lengths. Moreover, there are exactly~$230$ nonempty even-Parikh $C^1$-words having no nonempty shuffle-square prefix, and all of~them have length at~most~$34$.
\end{theorem}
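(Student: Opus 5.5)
The plan is to study the $C^1$ bad-prefix tree $\cB_n^{(1)}$ from recurrence~\eqref{eq:Ck-bad-tree}. The suffix assertion will follow from the prefix assertion by reversal, using Lemmas~\ref{lem:smooth-closure}(ii) and~\ref{lem:closure-shuffle}(ii). Properness is automatic: $w$ itself is not a shuffle square, so any nonempty shuffle-square prefix of~$w$ is proper. It therefore suffices to establish the following.
\begin{enumerate}[(a)]
\item The set $\bigcup_n\{w\in\cB_n^{(1)}: w\text{ has even Parikh vector}\}$ is finite, has exactly $230$ nonempty elements, and all of them have length at most~$34$.
\item Some such word has length exactly~$34$.
\end{enumerate}
Indeed, an even-Parikh non-shuffle-square $w$ of length at least~$36$ then cannot lie in the tree, so it has a nonempty shuffle-square prefix.

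Since the tree is infinite (it contains the prefixes of $Z$ in~\eqref{eq:C1-infinite-ray}), a finite search alone will not do, and I would split it into a finite part and a periodic part. First I run Listing~\ref{lst:bad-prefix-tree} with $k=1$ up to some length $N$ (say $N$ around $60$), large enough that the level sizes have settled at $422$. By inspecting the surviving words I then write each node of level $N$ as $x\,p^{j}y$, with $p\in\{\word{1122},\word{2211}\}$, $|y|<4$ and $x$ a short fixed stem. This gives $422$ candidate rays $x\,p^\omega$, listed explicitly.

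Next I prove a parity fact along each ray. Appending a period $p$ adds two letters of each kind, so the parity pattern of the prefixes of a ray is $4$-periodic beyond the stem. I check for each of the $422$ rays that no prefix of length greater than~$34$ has even Parikh vector; as with~$Z$, the even-length prefixes have both multiplicities odd. Consequently no prefix of a ray beyond the stem is a shuffle square (Lemma~\ref{lem:even-necessary}), so each ray genuinely stays inside the tree, and each contributes no even-Parikh words past the finite region.

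The main obstacle is to show that the tree beyond level $N$ consists \emph{only} of these rays. Every exit $x\,p^{j}y\word{a}$ that leaves the period and is still in $\cC^1$ must acquire a shuffle-square prefix for every~$j$, not just for the $j$ checked numerically; stabilization of the counts alone does not prove this. I would attack it with the residual-state recognizer in the form of Proposition~\ref{prop:residual-test}, run along the ray. Concretely, I would try to show that the family of sets $\cQ_i$ obtained after reading $x\,p^{j}$ is eventually periodic up to a shift in~$j$, for instance that reading $p^2$ maps the relevant residual set onto itself in the appropriate sense. If so, the question of whether a fixed continuation $y\word{a}z$ of bounded length reaches~$\eps$ does not depend on~$j$ once $j\ge j_0$. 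Then it is enough to verify each of the finitely many exits for $j\le j_0+2$, which is a pumping argument over the exit transitions. If exact periodicity of the residual sets fails, the fallback is a monotonicity inclusion: show that inserting $p^2$, which is itself a shuffle square, never destroys a successful path, so that a shuffle-square prefix found at $j_0$ propagates to all larger~$j$.

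With the tree thus described as a finite part plus $422$ verified periodic rays, claim~(a) follows. The count $230$ and the maximal length $34$ are read off from the finite part of the tree, combined with the parity check on the rays. Sharpness, claim~(b), is witnessed by any length-$34$ even-Parikh node of the tree.
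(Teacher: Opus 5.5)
Your outline is the paper's: a finite computation of the $C^1$ bad-prefix tree, identification of the $422$ periodic rays with periods $\word{1122}$ and $\word{2211}$, parity along the rays, a pumping argument in the residual recognizer, and reversal for suffixes. The gap is the pumping step. It carries all the content, and you leave it as a direction whose two proposed mechanisms fail as stated.

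First, the sets $\cQ_i$ after $xp^j$ are never eventually periodic. Sending every letter to one copy shows that the whole prefix read so far is always a residual, so these sets contain residuals of unbounded length. Truncating to residuals of length at most $L$ does not give a closed finite system either, because the short residuals after $xp^{j+1}$ depend on residuals of length up to $L+4$ after $xp^j$.

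Second, your fallback is false in general. $\word{11}$ and $\word{22}$ are shuffle squares, yet inserting $\word{22}$ into the middle of $\word{11}$ gives $\word{1221}$, which is not one (Example~\ref{ex:1221}). Being a shuffle square is not the property of $p^2$ that matters.

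What is needed is a certificate consisting of a single residual. Such a certificate propagates monotonically and requires no knowledge of the rest of the residual set. For each seed $u$ (the paper uses the $422$ words of $\cB_{38}^{(1)}$), exhibit one residual $q$ reachable after $u$ such that $q\xrightarrow{p}q$ and $q\xrightarrow{z}\eps$ for every $C^1$-admissible exit word~$z$.

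When $p=\word{1122}$ the seed ends in $\word{22}$, so the $C^1$ condition forces the first and third letters of each period. The only exits are then $z=\word{12}$ and $z=\word{1121}$. The residual $q=\word{12}$ satisfies $\word{12}\xrightarrow{\word{1122}}\word{12}$, $\word{12}\xrightarrow{\word{12}}\eps$ and $\word{12}\xrightarrow{\word{1121}}\eps$, with the complementary statements for $\word{2211}$.

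Induction on $j$ then shows three things: $q$ is reachable after $up^j$ for every $j$; each deviation from the ray is itself a shuffle square and is pruned; hence $up^\omega$ is the unique bad continuation. Together with your parity observation, this gives $|\cB_n^{(1)}|=422$ for every $n\geqslant38$ and no even-Parikh node of length greater than~$34$.
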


\begin{proof}
Apply recurrence~\eqref{eq:Ck-bad-tree} with $k=1$. The~complete level sizes through the~onset of~periodicity are
\begin{align}\label{eq:C1-bad-counts}
(|\cB_n^{(1)}|)_{n=0}^{42}
={}&(1,2,2,4,4,6,6,10,10,14,16,26,26,36,38,56,52,72,76,104,\\ \nonumber
&108,150,146,202,186,244,242,316,298,372,346,432,390,454,\\ \nonumber
&418,470,422,430,422,422,422,422,422).
\end{align}
The~nonzero counts of~even-Parikh words in these levels are given in Table~\ref{tab:C1-even-bad}. They sum to~$230$, and the~last occurs at length~$34$.

\begin{table}[ht]
\centering
\caption{Nonempty even-Parikh $C^1$-words with no nonempty shuffle-square prefix}
\label{tab:C1-even-bad}
\small
\begin{tabular}{@{}rr@{\qquad}rr@{\qquad}rr@{}}
\toprule
$n$ & number & $n$ & number & $n$ & number\\
\midrule
 4 & 2 & 16 & 10 & 26 & 28\\
 8 & 4 & 18 & 14 & 28 & 38\\
10 & 2 & 20 & 24 & 30 & 20\\
12 & 8 & 22 & 18 & 32 & 24\\
14 & 6 & 24 & 28 & 34 & 4\\
\midrule
\multicolumn{6}{c}{$\displaystyle\text{total}=230$}\\
\bottomrule
\end{tabular}
\end{table}

It remains to~prove that no later even-Parikh level can reappear. For a~residual~$q$ and a~word~$z$ whose first run starts after a~run boundary, write
\begin{align*}
 q\xrightarrow{z}r
\end{align*}
when the~unpruned run-capacity transitions of Listing~\ref{lst:run-test}, initialized at~$q$, can finish at~$r$. The~finite calculation partitions the~$422$ words in $\cB_{38}^{(1)}$ into two classes of~size~$211$. For~each seed~$u$ in the~first class, $u$ ends in~$\word{22}$, both its multiplicities are odd, the~residual~$\word{12}$ is reachable after~$u$, and
\begin{align}\label{eq:C1-pump-one}
 \word{12}\xrightarrow{\word{1122}}\word{12},
 \qquad \word{12}\xrightarrow{\word{12}}\eps,
 \qquad \word{12}\xrightarrow{\word{1121}}\eps.
\end{align}
For~each seed in~the~second class, the~complementary assertions hold: $u$ ends in~$\word{11}$, residual~$\word{21}$ is reachable, and
\begin{align}\label{eq:C1-pump-two}
 \word{21}\xrightarrow{\word{2211}}\word{21},
 \qquad \word{21}\xrightarrow{\word{21}}\eps,
 \qquad \word{21}\xrightarrow{\word{2212}}\eps.
\end{align}

For~a~concrete illustration, consider the~seed
\begin{align*}
 U=\word{12}(\word{1122})^9\in\cB_{38}^{(1)}.
\end{align*}
It contains $19$ copies of~each letter and reaches residual~$\word{12}$. Appending two periods and then the~exit word~$\word{12}$ gives the~sequence of~transitions
\begin{align*}
 \eps\xrightarrow{U}\word{12}
 \xrightarrow{\word{1122}}\word{12}
 \xrightarrow{\word{1122}}\word{12}
 \xrightarrow{\word{12}}\eps.
\end{align*}
Consequently, the~word
\begin{align*}
 W=U(\word{1122})^2\word{12}
   =\word{12}(\word{1122})^{11}\word{12}
\end{align*}
is a~shuffle square of~length~$48$, with root~$(\word{12})^{12}$. An~explicit decomposition assigns the~initial~$\word{12}$ to~the~first copy, the~first and third positions of~each block~$\word{1122}$ to~the~first copy, the~second and fourth positions to~the~second copy, and the~final~$\word{12}$ to~the~second copy. After~$U$ and after each appended period, the~first partial copy exceeds the~second by~the~same suffix~$\word{12}$; the~exit word supplies precisely this missing suffix.

Returning to~the~general argument, consider any seed~$u$ in~the~first class. The~$C^1$ condition forces the~next letter after~$\word{22}$ to~be~$\word{1}$. Choosing~$\word{2}$ next completes the~exit word~$\word{12}$ and hence a~shuffle-square prefix by~\eqref{eq:C1-pump-one}. Choosing $\word{1}$ is therefore the~only bad continuation. The~next letter is then forced to~be~$\word{2}$. Choosing~$\word{1}$ after it completes the~other exit $\word{1121}$ and gives a~shuffle-square prefix. Hence the~unique bad four-letter continuation is~$\word{1122}$. This block returns both the~last run and the~reachable residual to~their initial states. Induction makes $u(\word{1122})^\omega$ the~unique bad ray from~$u$. Every finite prefix along this ray has at least one odd multiplicity, while each block endpoint has both multiplicities odd, so no full word on the~ray is itself a~shuffle square. The~same argument with~\eqref{eq:C1-pump-two} gives the unique ray $u(\word{2211})^\omega$ from every seed in~the~second class. Thus $|\cB_n^{(1)}|=422$ for~every $n\geqslant38$, and none of~these words has even Parikh vector.

It follows from Table~\ref{tab:C1-even-bad} that an~even-Parikh $C^1$-word of~length at least~$36$ with no shuffle-square prefix does not exist. A~non-shuffle-square word satisfying the~hypotheses must therefore have a~proper such prefix. Applying the~same conclusion to $\rev(w)$ and using reversal closure gives the~suffix. Finally, \begin{align}\label{eq:C1-sharp34}
 \word{1221121121221221212211221122112212}
 =\word{122}(\word{112})^2(\word{122})^2\word{12}(\word{1221})^4\word{2}
\end{align}
has length~$34$, even Parikh vector, and belongs to $\cB_{34}^{(1)}$, proving the~sharpness assertion.

Listing~\ref{lst:C1-verification} records all finite assertions needed for the~count, the~pumping step, and the~example establishing sharpness.

\begin{lstlisting}[style=pseudocodestyle,caption={The finite count and pumping argument for $C^1$},label={lst:C1-verification}]
B <- BAD-PREFIX-TREE(1, 42)
assert (size(B[n]))[n=0,...,42] equals the vector in (C1-bad-counts)
assert the even-Parikh counts equal Table C1-even-bad and sum to 230

for u in B[38] do
  V <- {v in B[42] : u is a prefix of v}
  assert size(V) = 1
  p <- the last four letters of the unique word in V
  if p = (*@$\word{1122}$@*) then
    assert u ends in (*@$\word{22}$@*)
    (q, exits) <- ((*@$\word{12}$@*), {(*@$\word{12}$@*),(*@$\word{1121}$@*)})
  else
    assert p = (*@$\word{2211}$@*) and u ends in (*@$\word{11}$@*)
    (q, exits) <- ((*@$\word{21}$@*), {(*@$\word{21}$@*),(*@$\word{2212}$@*)})
  assert both multiplicities of u are odd
  assert q is reachable after u
  assert q --p--> q and q --z--> epsilon for every z in exits
assert 211 seeds have p = (*@$\word{1122}$@*) and 211 have p = (*@$\word{2211}$@*)
assert the word in (C1-sharp34) belongs to B[34]
\end{lstlisting}

The~recurrence, complete level vector, Table~\ref{tab:C1-even-bad}, transition identities \eqref{eq:C1-pump-one}--\eqref{eq:C1-pump-two}, and Listing~\ref{lst:C1-verification} together give a~finite verification independent of supplementary source code.
\end{proof}

The~theorem yields a~finite terminal core, though unlike at levels $C^2$ and~$C^3$ the~individual shuffle-square blocks cannot be bounded.

\begin{corollary}\label{cor:C1-factorization}
Every nonempty even-Parikh $C^1$-word~$w$ has a~greedy factorization
\begin{align*}
 w=S_1S_2\cdots S_k r,
\end{align*}
where every $S_i$ is a~$C^1$ shuffle square and the~terminal remainder is either empty or one of~the~$230$ words counted in Table~\ref{tab:C1-even-bad}. In~particular, $|r|\leqslant34$. If $w$ is not a~shuffle square, then $r\ne\eps$. There is no uniform upper bound on the~lengths of~the~blocks~$S_i$.
\end{corollary}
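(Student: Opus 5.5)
The plan is a greedy construction driven by Theorem~\ref{thm:C1-reduction}, with the properties checked by induction on $|w|$.

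\emph{Construction.} Let $w$ be a nonempty even-Parikh $C^1$-word. If $w$ has no nonempty shuffle-square prefix, put $k=0$ and $r=w$; by definition $r$ is one of the $230$ words of Table~\ref{tab:C1-even-bad}. Otherwise let $S_1$ be the shortest nonempty shuffle-square prefix and write $w=S_1w'$. By Lemma~\ref{lem:even-necessary}, $S_1$ has even Parikh vector, so $w'$ does too. By Lemma~\ref{lem:smooth-closure}(i), $S_1$ and $w'$ lie in $\cC^1$. If $w'=\eps$, stop with $r=\eps$. Otherwise apply the same step to $w'$. The length strictly decreases, so the process terminates.

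\emph{The remainder.} The terminal remainder is either empty or a nonempty even-Parikh $C^1$-word with no nonempty shuffle-square prefix. Theorem~\ref{thm:C1-reduction} shows that every such word is among the $230$ counted words, all of length at most~$34$, which gives $|r|\leqslant34$. If $r=\eps$, then $w=S_1\cdots S_k$ is a shuffle square by Lemma~\ref{lem:closure-shuffle}(i). Contrapositively, if $w$ is not a shuffle square then $r\neq\eps$.

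\emph{No uniform block bound.} This is the only genuinely new point. The natural source of long minimal blocks is the periodic rays in the proof of Theorem~\ref{thm:C1-reduction}. Take $W_j=\word{12}(\word{1122})^{j}\word{12}$. It lies in $\cC^1$, since its runs are $1,1,2,2,\ldots,2,1,1$, all of length at most~$2$. It has root $(\word{12})^{j+1}$ by the decomposition described there, so it is a shuffle square. Its proper prefixes are prefixes of $\word{12}(\word{1122})^\omega$ followed possibly by a final $\word{1}$. By the argument for $Z$ in~\eqref{eq:C1-infinite-ray}, every even-length prefix of $Z$ has odd multiplicities, so none of these prefixes is a shuffle square. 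The only prefix not contained in $Z$ is $W_j$ itself, once the final $\word{12}$ is appended.

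So the greedy factorization of $W_j$ is $S_1=W_j$, $r=\eps$, and $|S_1|=4j+4$ is unbounded. The one thing I would check carefully is the parity claim along $Z$ together with the single prefix ending in $\word{21}$. The prefix $\word{12}(\word{1122})^{j}\word{1}$ has odd length, so it cannot be a shuffle square, and the minimality claim therefore holds.
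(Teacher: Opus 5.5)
Your proposal is correct. The greedy construction and the remainder analysis match the paper's proof. Both read off the $230$-word list and $|r|\leqslant34$ from Theorem~\ref{thm:C1-reduction}, and both get $r\ne\eps$ from Lemma~\ref{lem:closure-shuffle}(i).

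The difference is in the unboundedness claim. The paper takes $Y_m=U(\word{1122})^m\word{12}$ with the seed $U=\word{12}(\word{1122})^9\in\cB_{38}^{(1)}$, which is exactly your $W_{m+9}$. It certifies the shuffle-square property and minimality by appeal to computer-verified facts: $U$ lies in the bad tree and reaches residual $\word{12}$, and the pumping and exit identities in \eqref{eq:C1-pump-one} hold.

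You check everything by hand instead. The explicit coloring gives the root $(\word{12})^{j+1}$. Every proper prefix of $W_j$ is a prefix of $Z$, so it has odd length or odd multiplicities. Your route is elementary and independent of the finite search, and it produces minimal blocks of every length $4j+4$ with $j\geqslant0$, not only from $40$ on. The paper's route instead places the example inside the bad-ray structure, which it reuses for long minimal prefixes in Theorem~\ref{repair:thm:distance}.

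One small remark: the prefix $\word{12}(\word{1122})^j\word{1}$ is itself a prefix of $Z$, so your separate odd-length check at the end is harmless but unnecessary.
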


\begin{proof}
Repeatedly remove the~shortest nonempty shuffle-square prefix. Every remainder is a~$C^1$ factor and retains an~even Parikh vector. At termination, a~nonempty remainder has no nonempty shuffle-square prefix and is therefore one of the~$230$ words in Table~\ref{tab:C1-even-bad}. Concatenation of~the~$S_i$ proves that $r=\eps$ would make~$w$ a~shuffle square.

For~the~absence of~a block bound, put
\begin{align*}
 U=\word{12}(\word{1122})^9,
 \qquad Y_m=U(\word{1122})^m\word{12}\quad(m\geqslant0).
\end{align*}
The~word~$U$ is one of~the first-class seeds above and reaches residual~$\word{12}$. The~pumping and exit identities in \eqref{eq:C1-pump-one} show that every~$Y_m$ is a~$C^1$ shuffle square, whereas every proper nonempty prefix lies on the~bad ray from~$U$. Thus $Y_m$ is a~minimal shuffle-square prefix of~length~$40+4m$.
\end{proof}

\begin{example}
\label{ex:odd-ABBA-reduction}
Consider the~odd members of~the~family $W_n=(\word{2112})^n$ introduced above. Grytczuk, Pawlik, and Ruci\'nski proved that none of~these words is a~shuffle square~\cite{GrytczukPawlikRucinskiPreprint}. Put
\begin{align*}
  S=(\word{2112})^2=\word{21122112}.
\end{align*}
For~$j\geqslant1$, we have $W_{2j+1}=S W_{2j-1}$. The~word~$S$ is an~ordinary square and hence a~shuffle square. It is the~shortest nonempty shuffle-square prefix: the~prefixes of~lengths~$2$ and~$6$ have odd letter multiplicities, while the~length-$4$ prefix is~$\word{2112}$, the~complement of~the~obstruction in~Example~\ref{ex:1221}. The~greedy factorization therefore removes~$S$ exactly~$j$ times and leaves~$W_1=\word{2112}$. Thus this infinite family of~$C^1$ exceptions reduces to~a~single irreducible word of~length~$4$.
\end{example}

These $C^1$ statements do not characterize all doubly binary even-Parikh non-shuffle-squares, but they separate a~finite irreducible core from the~remaining words and locate exactly where the~bounded local method valid for~$C^2$ and above breaks down.

\section{Bounded local factorizations}

\noindent
The~finite length-$24$ lemma also has consequences which do not assume an~even Parikh vector. We state them at the~$C^3$ level, so all stronger differentiability classes follow at once.

\begin{proposition}
\label{prop:boundary-local}
Let $w=\word{w}_1\cdots \word{w}_n$ be a~finite $C^3$-word.
\begin{enumerate}[(i)]
\item For~every $1\leqslant i\leqslant n-23$, some factor beginning at~position~$i$ is a~shuffle square of~length in~$\{2,4,\ldots,24\}$.
\item For~every $24\leqslant j\leqslant n$, some factor ending at~position~$j$ is a~shuffle square of~length in~$\{2,4,\ldots,24\}$.
\end{enumerate}
Consequently, a~$C^3$-word of~length~$n\geqslant24$ contains at~least $n-23$ shuffle-square occurrences with~distinct starting positions.
\end{proposition}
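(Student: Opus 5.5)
The plan is to apply Lemma~\ref{lem:C3-local} to the length-$24$ windows of~$w$, using factoriality to place every such window in~$\cC^3$.

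For~(i), fix $1\leqslant i\leqslant n-23$ and consider the factor $x=\word{w}_i\word{w}_{i+1}\cdots\word{w}_{i+23}$ of length exactly~$24$. By Lemma~\ref{lem:smooth-closure}(i), $x\in\cC^3$. Lemma~\ref{lem:C3-local} then gives $x$ a nonempty shuffle-square prefix~$U$. A prefix of~$x$ is a factor of~$w$ beginning at position~$i$, and $|U|\leqslant24$. By Lemma~\ref{lem:even-necessary}, $|U|$ is even and nonzero, so $|U|\in\{2,4,\ldots,24\}$.

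For~(ii), fix $24\leqslant j\leqslant n$ and take the window $y=\word{w}_{j-23}\cdots\word{w}_j$, which again lies in~$\cC^3$ by factoriality. Lemma~\ref{lem:C3-local} gives $y$ a nonempty shuffle-square suffix. This suffix is a factor of~$w$ ending at~$j$, and the same parity argument restricts its length to $\{2,\ldots,24\}$. The suffix half of Lemma~\ref{lem:C3-local} was already derived there by reversal, so no separate argument is needed.

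For the final count, assume $n\geqslant24$. Part~(i) supplies, for each of the $n-23$ starting positions $i=1,\ldots,n-23$, at least one shuffle-square occurrence starting at~$i$. Occurrences with different starting positions are distinct, so there are at least $n-23$ of them.

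There is no real obstacle here. The only points to check are that the window indices fit inside $\{1,\ldots,n\}$ exactly when $i\leqslant n-23$ (respectively $j\geqslant24$), and that the length bound comes directly from the window length together with the evenness given by Lemma~\ref{lem:even-necessary}.
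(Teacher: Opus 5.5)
Your proposal is correct and follows the paper's proof essentially step for step: factoriality puts each length-$24$ window in $\cC^3$, Lemma~\ref{lem:C3-local} supplies the shuffle-square prefix (respectively suffix), and choosing one occurrence per starting position gives the count. Your use of Lemma~\ref{lem:even-necessary} to place the length in $\{2,4,\ldots,24\}$ just makes explicit a step the paper leaves implicit.
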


\begin{proof}
For~(i), the~length-$24$ factor $\word{w}_i\word{w}_{i+1}\cdots \word{w}_{i+23}$ belongs to~$\cC^3$ by~Lemma~\ref{lem:smooth-closure}(i). Apply the~prefix part of~Lemma~\ref{lem:C3-local}. Part~(ii) follows from~the~suffix part applied to~$\word{w}_{j-23}\cdots \word{w}_j$. The~final count chooses one occurrence for~each of~the~$n-23$ possible starting positions in~(i). Occurrences with~different starting positions are different intervals.
\end{proof}

\begin{theorem}
\label{thm:finite-factorization}
Every finite $C^3$-word~$w$ has a~factorization
\begin{align*}
  w=S_1S_2\cdots S_k r,
\end{align*}
obtained by~repeatedly removing the~shortest nonempty shuffle-square prefix. Here
\begin{enumerate}[(i)]
\item each $S_i$ is a~$C^3$ shuffle square and $2\leqslant|S_i|\leqslant24$.
\item the~terminal remainder~$r$ belongs to~$\cC^3$, has no nonempty shuffle-square prefix, satisfies $r\in\cB_{|r|}^{(3)}$, and has $|r|\leqslant23$.
\item $S_1\cdots S_k$ is a~shuffle square whenever $k\geqslant1$.
\end{enumerate}
If $w\in\cC^h$ for~some $h\geqslant3$, then all blocks and the~remainder belong to~$\cC^h$. If $w$ is smooth, they are smooth. In~particular, every $C^3$-word of~length~$n\geqslant24$ has a~shuffle-square prefix and suffix of~length at~least $n-23$.
\end{theorem}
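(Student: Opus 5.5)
The plan follows the proof of Corollary~\ref{cor:C2-factorization}, with Lemma~\ref{lem:C3-local} in place of Lemma~\ref{lem:C2-local}. Put $w_0=w$. While the current remainder $w_{i-1}$ has a nonempty shuffle-square prefix, let $S_i$ be the shortest one and write $w_{i-1}=S_iw_i$. Each step shortens the remainder, so the procedure terminates, and we set $r=w_k$. By Lemma~\ref{lem:smooth-closure}(i), every remainder and every block is a factor of $w$, hence a $C^3$-word. The same argument applies inside $\cC^h$ for $h\geqslant3$, and for smooth words via the final assertion of that lemma.

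For (i), suppose $|S_i|>24$. The length-$24$ prefix of $w_{i-1}$ lies in $\cC^3$, so by Lemma~\ref{lem:C3-local} it has a nonempty shuffle-square prefix of length at most $24$. That prefix is also a shorter nonempty shuffle-square prefix of $w_{i-1}$, which contradicts the minimality of $S_i$. Lemma~\ref{lem:even-necessary} gives $|S_i|\geqslant2$.

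For (ii), termination means that $r$ has no nonempty shuffle-square prefix. Then every prefix of $r$ also lacks one. Together with $r\in\cC^3$ and recurrence~\eqref{eq:Ck-bad-tree}, this gives $r\in\cB_{|r|}^{(3)}$ by induction along the prefixes of $r$. Since $\cB_{24}^{(3)}=\varnothing$, every longer level of the tree is empty as well, so $|r|\leqslant23$.

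For (iii), apply Lemma~\ref{lem:closure-shuffle}(i) repeatedly.

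For the last claim, assume $n\geqslant24$.
\begin{itemize}
\item \emph{Prefix.} The prefix $S_1\cdots S_k$ has length $n-|r|\geqslant n-23$. It is nonempty, because $|r|\leqslant23<n$ forces $k\geqslant1$, and by (iii) it is a shuffle square.
\item \emph{Suffix.} Apply the same construction to $\rev(w)$, which lies in $\cC^3$ by Lemma~\ref{lem:smooth-closure}(ii). This gives a shuffle-square prefix of $\rev(w)$ of length at least $n-23$. Reversing it, using Lemma~\ref{lem:closure-shuffle}(ii), gives the required shuffle-square suffix of $w$.
\end{itemize}

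The whole proof is routine once Lemma~\ref{lem:C3-local} is available. The only points that need care are the minimality argument behind the bound $24$ and the fact that membership in $\cB^{(3)}$ is inherited by prefixes.
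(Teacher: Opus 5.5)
Your proof is correct and follows the paper's argument essentially step for step. The steps are greedy removal of shortest shuffle-square prefixes, factoriality for class membership, Lemma~\ref{lem:C3-local} for the bounds $24$ and $23$, concatenation closure for (iii), and reversal for the suffix claim. The only cosmetic difference is your route through recurrence~\eqref{eq:Ck-bad-tree} and $\cB_{24}^{(3)}=\varnothing$: membership $r\in\cB_{|r|}^{(3)}$ is already immediate from the definition of $\cB_n^{(3)}$, and the paper gets $|r|\leqslant 23$ by applying Lemma~\ref{lem:C3-local} directly to the first $24$ letters of a remainder.
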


\begin{proof}
Start with~the~remainder $r_0=w$. If $r_{i-1}$ has a~nonempty shuffle-square prefix, let $S_i$ be its shortest such prefix and write $r_{i-1}=S_ir_i$. The~choices are deterministic, and the~length strictly decreases, so the~process terminates in~a~remainder~$r$ with~no such prefix. Every intermediate remainder and every selected block is a~factor of~$w$, hence belongs to~$\cC^3$ by~Lemma~\ref{lem:smooth-closure}(i). The~same lemma preserves any stronger class~$\cC^k$ and smoothness.

If an~intermediate remainder has length at~least~$24$, its first~$24$ letters form a~$C^3$-word and Lemma~\ref{lem:C3-local} supplies a~shuffle-square prefix of~length at~most~$24$. Thus every chosen shortest block has length at~most~$24$, and a~terminal remainder cannot have length at~least~$24$. The~lower bound~$2$ follows from~Lemma~\ref{lem:even-necessary}. Repeated use of~Lemma~\ref{lem:closure-shuffle}(i) proves~(iii).

When $n\geqslant24$, at~least one block is removed and $|S_1\cdots S_k|=n-|r|\geqslant n-23$. Applying the~established prefix statement to~$\rev(w)$, then using Lemmas~\ref{lem:smooth-closure}(ii) and~\ref{lem:closure-shuffle}(ii), gives the~suffix statement.
\end{proof}

\begin{remark}
The~constant~$23$ in~Theorem~\ref{thm:finite-factorization} is optimal even for~smooth words. Each word in~\eqref{eq:bad23} is smooth and has no nonempty shuffle-square prefix, so its greedy factorization has no block and leaves the~entire length-$23$ word as~its remainder.
\end{remark}

\begin{example}
\label{ex:greedy-P20}
For~the~smooth word $P_{20}$, Theorem~\ref{thm:main-intro} shows that the~shortest shuffle-square prefix is $P_{16}$, and the~greedy factorization is
\begin{align*}
  P_{20}=P_{16}\,\word{1221}.
\end{align*}
The~remainder $\word{1221}$ has no nonempty shuffle-square prefix, by~Example~\ref{ex:1221}.  Nevertheless, $P_{20}$ itself is a~shuffle square, as~Table~\ref{tab:base} shows.  Thus Theorem~\ref{thm:finite-factorization} is a~canonical bounded-block construction, not a~claim that its terminal remainder characterizes whether the~original word is a~shuffle square.
\end{example}

The~preceding construction needs only a~local hypothesis in~the~infinite setting.

\begin{definition}
\label{def:24-local}
A~right-infinite or bi-infinite binary word~$X$ is \emph{length-$24$ locally~$C^3$} if every factor of~$X$ of~length~$24$ belongs to~$\cC^3$. Any word whose length-$24$ factors belong to~$\cC^k$ for~some $k\geqslant3$ has this property. Proposition~\ref{prop:infinite-factors-smooth} shows the~same for~every smooth right-infinite word.
\end{definition}

\begin{theorem}
\label{thm:infinite-factorization}
Let $X=X_1X_2\cdots$ be a~length-$24$ locally~$C^3$ right-infinite word, and fix a~starting position $t\geqslant1$.  Put $s_0=t-1$.  Recursively, let $d_k$ be the~least positive integer~$d$ for~which
\begin{align*}
  X_{s_{k-1}+1}X_{s_{k-1}+2}\cdots X_{s_{k-1}+d}
\end{align*}
is a~shuffle square, and put $s_k=s_{k-1}+d_k$.  Then the~recursion is defined for~all $k\geqslant1$ and
\begin{align*}
  2\leqslant d_k\leqslant24.
\end{align*}
Every cumulative factor $X_tX_{t+1}\cdots X_{s_k}$ is a~shuffle square. Moreover, if
\begin{align*}
 A_{X,t}(N)=\#\{1\leqslant\ell\leqslant N:
 X_tX_{t+1}\cdots X_{t+\ell-1}\text{ is a~shuffle square}\},
\end{align*}
then
\begin{align}\label{eq:local-density}
  \left\lfloor\frac{N}{24}\right\rfloor
  \leqslant A_{X,t}(N)\leqslant\left\lfloor\frac{N}{2}\right\rfloor
  \qquad(N\geqslant1),
\end{align}
and hence
\begin{align*}
  \frac1{24}
  \leqslant\liminf_{N\to\infty}\frac{A_{X,t}(N)}{N}
  \leqslant\limsup_{N\to\infty}\frac{A_{X,t}(N)}{N}
  \leqslant\frac12.
\end{align*}
\end{theorem}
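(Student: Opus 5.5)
The plan is to mimic the proof of Theorem~\ref{thm:finite-factorization}, but starting at position~$t$ of the infinite word~$X$ and using only the local hypothesis. First I will show that each $d_k$ exists and satisfies $2\leqslant d_k\leqslant24$. Fix $k\geqslant1$ and suppose $s_{k-1}$ is defined. The factor $F=X_{s_{k-1}+1}\cdots X_{s_{k-1}+24}$ has length~$24$, so it lies in~$\cC^3$ by the definition of length-$24$ locally~$C^3$. Lemma~\ref{lem:C3-local} then gives $F$ a nonempty shuffle-square prefix. Hence some $d\leqslant24$ works, the least one $d_k$ exists, and $d_k\leqslant 24$. Lemma~\ref{lem:even-necessary} excludes $d_k=1$, so $d_k\geqslant2$. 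Induction on~$k$ shows the recursion is defined for all~$k$. Since $X$ is right-infinite, there is no boundary issue, unlike Proposition~\ref{prop:boundary-local}.

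Second, I will show that each cumulative factor $X_t\cdots X_{s_k}$ is a shuffle square. It is the concatenation of the blocks $X_{s_{i-1}+1}\cdots X_{s_i}$ for $i=1,\ldots,k$. Each block is a shuffle square by the choice of $d_i$, so repeated use of Lemma~\ref{lem:closure-shuffle}(i) gives the claim.

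Third, the density bounds \eqref{eq:local-density}. For the lower bound, the lengths $\ell_k=s_k-t+1=d_1+\cdots+d_k$ are distinct, and each satisfies $\ell_k\leqslant24k$. So for $k\leqslant\lfloor N/24\rfloor$ we get $\ell_k\leqslant N$, and each such $\ell_k$ is counted by $A_{X,t}(N)$. This gives $A_{X,t}(N)\geqslant\lfloor N/24\rfloor$. For the upper bound, Lemma~\ref{lem:even-necessary} forces every counted $\ell$ to be even. There are exactly $\lfloor N/2\rfloor$ even integers in $\{1,\ldots,N\}$.

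Finally, dividing \eqref{eq:local-density} by~$N$ and using $\lfloor N/24\rfloor/N\to1/24$ and $\lfloor N/2\rfloor/N\to1/2$ gives the liminf and limsup inequalities. No step is a serious obstacle. The only point needing care is the first step: the local hypothesis must be applied to the window starting at $s_{k-1}+1$, which is exactly what ``every factor of length~$24$'' provides. The bounded-gap counting in the lower bound must also use the sum bound $\ell_k\leqslant 24k$ rather than anything about individual positions.
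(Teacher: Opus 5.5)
Your proposal is correct and follows the paper's proof essentially step for step. As in the paper, you apply Lemma~\ref{lem:C3-local} to the length-$24$ window at $s_{k-1}+1$ to get $d_k\leqslant24$, use parity for $d_k\geqslant2$, get the cumulative shuffle squares by concatenation, and obtain the density bounds from the distinct lengths $L_k\leqslant24k$ together with the count of even lengths.
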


\begin{proof}
Once $s_{k-1}$ is defined, the~length-$24$ factor beginning at~$s_{k-1}+1$ belongs to~$\cC^3$. Lemma~\ref{lem:C3-local} gives a~nonempty shuffle-square prefix, so the~least~$d_k$ exists and is at~most~$24$. It is at~least~$2$ by~parity. The~cumulative factor is the~concatenation of~the~first $k$ shuffle-square blocks, hence is a~shuffle square by~Lemma~\ref{lem:closure-shuffle}(i).

Writing $L_k=s_k-t+1=d_1+\cdots+d_k$, we have $L_k\leqslant24k$. For~$k=\lfloor N/24\rfloor$, the~$k$ distinct lengths $L_1,\ldots,L_k$ are all at~most~$N$ (with~the~assertion empty when $k=0$). This proves the~lower bound in~\eqref{eq:local-density}.  Its upper bound follows from~Lemma~\ref{lem:even-necessary}, since only even lengths can be shuffle-square lengths.  Dividing by~$N$ and taking lower and upper limits gives the~final chain of~inequalities.
\end{proof}

\begin{corollary}
\label{cor:bi-infinite}
Let $X=(X_i)_{i\in\mathbb Z}$ be a~length-$24$ locally~$C^3$ bi-infinite word, and fix any cut $c_0\in\mathbb Z$.  There is a~strictly increasing sequence of~cuts $(c_k)_{k\in\mathbb Z}$ such that
\begin{align*}
  2\leqslant c_{k+1}-c_k\leqslant24
\end{align*}
and every block $X_{c_k+1}\cdots X_{c_{k+1}}$ is a~shuffle square.
\end{corollary}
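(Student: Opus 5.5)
The plan is to build the cuts separately to the right and to the left of $c_0$, using the greedy procedure of Theorem~\ref{thm:infinite-factorization} in each direction.

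For the right-hand part ($k\geqslant0$), I would apply that theorem to the right-infinite word $Y=X_{c_0+1}X_{c_0+2}\cdots$ with starting position $t=1$. Every length-$24$ factor of~$Y$ is a factor of~$X$, so $Y$ is length-$24$ locally~$C^3$. The theorem then supplies block lengths $d_k$ with $2\leqslant d_k\leqslant24$, each block being a shuffle square. I would set $c_k=c_0+d_1+\cdots+d_k$ for $k\geqslant1$.

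For the left-hand part ($k<0$), I would use reversal. Consider the right-infinite word
\begin{align*}
 Y'=X_{c_0}X_{c_0-1}X_{c_0-2}\cdots.
\end{align*}
Each length-$24$ factor of~$Y'$ is the reversal of a length-$24$ factor of~$X$. By Lemma~\ref{lem:smooth-closure}(ii) it therefore lies in~$\cC^3$, so $Y'$ is again length-$24$ locally~$C^3$. Theorem~\ref{thm:infinite-factorization} applied to~$Y'$ with $t=1$ gives lengths $d'_k\in\{2,\ldots,24\}$ such that each block of~$Y'$ is a shuffle square. I would put $c_{-k}=c_0-(d'_1+\cdots+d'_k)$ for $k\geqslant1$. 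The block $X_{c_{-k}+1}\cdots X_{c_{-k+1}}$ is the reversal of the $k$-th block of~$Y'$, so it is a shuffle square by Lemma~\ref{lem:closure-shuffle}(ii).

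It remains to check that the two parts fit together. Both are strictly monotone away from $c_0$, and all gaps lie in $[2,24]$. Hence $(c_k)_{k\in\mathbb Z}$ is strictly increasing and unbounded in both directions. The only care needed is the index bookkeeping in the reversal: position~$j$ of~$Y'$ corresponds to position $c_0+1-j$ of~$X$, and the cut conventions must be consistent. There is no genuine obstacle beyond that.
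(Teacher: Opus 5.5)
Your proof is correct and is essentially the paper's argument. The right half is built exactly as in Theorem~\ref{thm:infinite-factorization}. On the left, you reverse the word and take greedy prefixes, while the paper repeatedly removes a shortest shuffle-square suffix using the suffix part of Lemma~\ref{lem:C3-local}; that suffix part is itself obtained by reversal, so the two constructions produce the same cuts.
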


\begin{proof}
Starting at~$c_0$, construct the~cuts to~the~right by~the~proof of~Theorem~\ref{thm:infinite-factorization}. Given a~cut on~the~leftward side, apply the~suffix part of~Lemma~\ref{lem:C3-local} to~the~length-$24$ factor ending at~that cut and move left by~the~length of~a~shortest shuffle-square suffix. This constructs all negative-indexed cuts, with~the~same bounds.
\end{proof}

\begin{corollary}\label{cor:C2-infinite-factorization}
Call a~right-infinite or bi-infinite word \emph{length-$40$ locally~$C^2$} when each of~its length-$40$ factors belongs to~$\cC^2$. Then all conclusions of~Theorem~\ref{thm:infinite-factorization} and Corollary~\ref{cor:bi-infinite} remain valid after replacing ``length-$24$ locally~$C^3$'' by~``length-$40$ locally~$C^2$'', the~upper block bound~$24$ by~$40$, and the~lower density bound~$1/24$ by~$1/40$.
\end{corollary}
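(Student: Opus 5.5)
The plan is to repeat, word for word, the proofs of Theorem~\ref{thm:infinite-factorization} and Corollary~\ref{cor:bi-infinite}, with Lemma~\ref{lem:C2-local} used wherever Lemma~\ref{lem:C3-local} was used. Those two proofs use exactly four facts.
\begin{enumerate}[(a)]
\item Every factor of length $24$ beginning (or ending) at a given position lies in $\cC^3$. This is the local hypothesis.
\item Every $C^3$-word of length $24$ has a nonempty shuffle-square prefix and a nonempty shuffle-square suffix. This is Lemma~\ref{lem:C3-local}.
\item Shuffle squares have even length. This is Lemma~\ref{lem:even-necessary}.
\item Concatenations of shuffle squares are shuffle squares. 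This is Lemma~\ref{lem:closure-shuffle}(i).
\end{enumerate}
Facts (c) and (d) do not depend on differentiability. So it suffices to replace (a) and (b) by their $C^2$ analogues with the constant $40$.

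For the right-infinite statement, let $X$ be length-$40$ locally $C^2$ and fix $t$. Suppose the cut $s_{k-1}$ has been constructed. The length-$40$ factor beginning at $s_{k-1}+1$ lies in $\cC^2$ by hypothesis. Lemma~\ref{lem:C2-local} then gives it a nonempty shuffle-square prefix. Hence the least $d_k$ exists and $d_k\leqslant 40$, and parity gives $d_k\geqslant 2$. Each cumulative factor $X_t\cdots X_{s_k}$ is a concatenation of shuffle squares, so it is a shuffle square by Lemma~\ref{lem:closure-shuffle}(i).

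For the density bounds, write $L_k=d_1+\cdots+d_k$, so that $L_k\leqslant 40k$. Taking $k=\lfloor N/40\rfloor$ gives at least $\lfloor N/40\rfloor$ distinct shuffle-square prefix lengths up to $N$. The upper bound $\lfloor N/2\rfloor$ is unchanged, since only even lengths can occur. Passing to $\liminf$ and $\limsup$ gives the bounds $1/40$ and $1/2$.

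For the bi-infinite statement, the rightward cuts from $c_0$ are built exactly as above. The leftward cuts use the suffix part of Lemma~\ref{lem:C2-local}. At a cut $c$, the length-$40$ factor $X_{c-39}\cdots X_c$ lies in $\cC^2$, so it has a shortest nonempty shuffle-square suffix of length between $2$ and $40$. We set the previous cut at $c$ minus that length and iterate. This gives $2\leqslant c_{k+1}-c_k\leqslant 40$ for every $k\in\mathbb Z$.

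There is no real obstacle here. The one point to check is that the argument never uses factoriality beyond the window length: every block chosen has length at most $40$. Each application of Lemma~\ref{lem:C2-local} therefore concerns a single length-$40$ factor, and that factor is in $\cC^2$ directly by the local hypothesis, so Lemma~\ref{lem:smooth-closure}(i) is not even needed.
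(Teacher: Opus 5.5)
Your proposal is correct and follows the paper's own argument: the paper's proof likewise says that the proofs of Theorem~\ref{thm:infinite-factorization} and Corollary~\ref{cor:bi-infinite} go through unchanged, with Lemma~\ref{lem:C2-local} supplying a shuffle-square prefix or suffix of length at most $40$ at each step. You spell out those steps explicitly, including the rightward recursion, the leftward cuts via suffixes, and the adjusted density constant $1/40$.
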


\begin{proof}
The~proofs are unchanged, with Lemma~\ref{lem:C2-local} supplying a~shuffle-square prefix or suffix of~length at~most~$40$ at~each step.
\end{proof}

\section{Characterization of~all shuffle-square prefixes}\label{sec:kolakoski-prefixes}

\noindent
Example~\ref{ex:first-decomposition} gives an~explicit shuffle decomposition of~the~first shuffle-square prefix, of~length~$16$.  Table~\ref{tab:base} gives such decompositions for~the~next three. In~each row, $I$ is a~support of~one copy and its complement in~$\{1,\ldots,m\}$ supports the~other copy.  The~two induced subwords both equal the~displayed root.  These three decompositions are illustrative direct checks, since the~universal theorem already covers their lengths.

\begin{table}[ht]
\centering
\caption{Further explicit shuffle decompositions of~Kolakoski prefixes}
\label{tab:base}
\small
\begin{tabular}{@{}ccl@{}}
\toprule
$m$ & root & support $I$\\
\midrule
$20$ & $\word{1221121221}$
 & $\{1,2,3,4,5,6,7,8,9,16\}$\\
$24$ & $\word{122112122112}$
 & $\{1,2,3,4,5,6,7,8,9,16,20,21\}$\\
$28$ & $\word{12211212211221}$
 & $\{1,2,3,4,5,6,7,8,9,16,20,21,24,25\}$\\
\bottomrule
\end{tabular}
\end{table}

Direct counting in~\eqref{eq:K} gives
\begin{align}\label{eq:small-EPK}
  \EPK\cap\{1,\ldots,33\}=\{4,8,16,20,24,28\}.
\end{align}
Examples~\ref{ex:1221} and~\ref{ex:length8} show that the~first two words in~\eqref{eq:small-EPK} are not shuffle squares.  Example~\ref{ex:first-decomposition} and Table~\ref{tab:base} handle the~other four.

We can now prove the~exact characterization announced in~Theorem~\ref{thm:main-intro}. This characterization was also obtained independently by~Kilpatrick~\cite[Theorem~1.3]{Kilpatrick}.

\begin{proof}[Proof of~Theorem~\ref{thm:main-intro}]
If $P_m$ is a~shuffle square, then $m\in\EPK$ by~Lemma~\ref{lem:even-necessary}.  The~two exceptional values are established in~Examples~\ref{ex:1221} and~\ref{ex:length8}.

Conversely, let $m\in\EPK\setminus\{4,8\}$. Equation~\eqref{eq:small-EPK} shows that either $m=16$ or $m\geqslant20$. The~case $m=16$ is Example~\ref{ex:first-decomposition}. In~the~remaining cases, $P_m$ is a~smooth factor of~$\K$ by~Corollary~\ref{cor:K-factors-smooth}, hence belongs to~$\cC^3$. It has length greater than~$16$ and even Parikh vector. It is therefore a~shuffle square by~Theorem~\ref{thm:C3-parity}.
\end{proof}

\begin{corollary}\label{cor:kolakoski-density}
The~increasing enumeration of~$\EPK\setminus\{4,8\}$, equivalently the~sequence of~shuffle-square prefix lengths of~$\K$, begins
\begin{align*}
  16,20,24,28,34,36,40,44,48,56,60,64,66,68,72,76,80,\ldots.
\end{align*}
It is infinite, and consecutive terms differ by~one of~$2,4,6,8$. If
\begin{align*}
  H_{\K}(N)=\#\{1\leqslant m\leqslant N:P_m\text{ is a~shuffle square}\},
\end{align*}
then, for~every $N\geqslant16$,
\begin{align}\label{eq:K-density}
  H_{\K}(N)\geqslant
  1+\left\lfloor\frac{N-16}{8}\right\rfloor,
\end{align}
and consequently
\begin{align*}
  \liminf_{N\to\infty}\frac{H_{\K}(N)}N\geqslant\frac18.
\end{align*}
\end{corollary}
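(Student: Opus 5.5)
By Theorem~\ref{thm:main-intro}, the set of shuffle-square prefix lengths is exactly $\EPK\setminus\{4,8\}$. So the plan is to translate every claim about the sequence into a statement about $\EPK$, and then invoke Proposition~\ref{prop:even-gap}.

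The displayed initial segment comes from two sources. The first is the enumeration \eqref{eq:EPKterms} of $\EPK$ with $4$ and $8$ removed. The second is \eqref{eq:small-EPK}, which confirms that no additional small terms occur below $34$. Infinitude is immediate, since $\EPK$ is infinite by Proposition~\ref{prop:even-gap} and removing two elements keeps it infinite. For the gap statement, note that $4$ and $8$ are the two smallest elements of $\EPK$. Hence any two consecutive terms of $\EPK\setminus\{4,8\}$ are also consecutive in $\EPK$. Proposition~\ref{prop:even-gap} then gives differences in $\{2,4,6,8\}$.

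For the density bound \eqref{eq:K-density}, list the terms as $e_1=16<e_2<\cdots$. Since every gap is at most $8$, induction gives $e_j\leqslant 16+8(j-1)$. Fix $N\geqslant16$ and put $j=1+\lfloor (N-16)/8\rfloor$. Then $e_j\leqslant 16+8\lfloor (N-16)/8\rfloor\leqslant N$, so the terms $e_1,\ldots,e_j$ are all at most $N$. This gives $H_{\K}(N)\geqslant j$. Dividing by $N$ and letting $N\to\infty$ yields $\liminf H_{\K}(N)/N\geqslant 1/8$, because $\lfloor (N-16)/8\rfloor/N\to1/8$.

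There is no genuine obstacle here. The only point needing care is that deleting $4$ and $8$ introduces no new gap. This holds because they are the initial elements of $\EPK$, so the first retained term $16$ starts the chain and every later gap is an original $\EPK$ gap.
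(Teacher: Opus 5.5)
Your proposal is correct and follows the paper's proof essentially step for step. You identify the prefix lengths with $\EPK\setminus\{4,8\}$ via Theorem~\ref{thm:main-intro}, read off the initial terms from \eqref{eq:EPKterms}, obtain infinitude and the gap set from Proposition~\ref{prop:even-gap}, and bound $e_j\leqslant 16+8(j-1)$ with $j=1+\lfloor (N-16)/8\rfloor$. Your explicit check that deleting the two smallest elements $4$ and $8$ creates no new gaps is a detail the paper leaves implicit.
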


\begin{proof}
The~displayed sequence follows from~Theorem~\ref{thm:main-intro}, and~\eqref{eq:EPKterms}. Infinitude and the~gap bound follow from~Proposition~\ref{prop:even-gap}. If the~displayed lengths are $\word{a}_1<\word{a}_2<\cdots$, then $\word{a}_1=16$ and $\word{a}_j\leqslant16+8(j-1)$.  Taking $j=1+\lfloor(N-16)/8\rfloor$ proves~\eqref{eq:K-density}.  The~lower-density bound follows after division by~$N$.
\end{proof}

\begin{corollary}
\label{cor:constructed-subsequence}
Put $s_0=0$.  Once $s_{k-1}$ is defined, let $d_k$ be the~least positive integer~$d$ for~which
\begin{align*}
  \K_{s_{k-1}+1}\K_{s_{k-1}+2}\cdots\K_{s_{k-1}+d}
\end{align*}
is a~shuffle square, and put $s_k=s_{k-1}+d_k$.  This recursion is well-defined for~every $k\geqslant1$, and
\begin{align*}
  2\leqslant d_k\leqslant24.
\end{align*}
Every $P_{s_k}$ is a~shuffle square.  The~initial cumulative endpoints are
\begin{align*}
  16,34,36,48,56,66,68,88,90,104,110,116,124,134,136,\ldots.
\end{align*}
They form an~infinite proper subsequence of~the~increasing enumeration of~$\EPK\setminus\{4,8\}$.  The~omitted terms, such as~$20,24,28,40$, are shuffle-square prefixes but are not endpoints of~that particular recursive factorization.
\end{corollary}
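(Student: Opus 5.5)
This is the Kolakoski specialization of Theorem~\ref{thm:infinite-factorization}, so the plan is to verify its hypothesis for $\K$ with $t=1$, read off the conclusions, and then settle the finite data and the properness claim.

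\textbf{Well-definedness and the bound on $d_k$.} By Corollary~\ref{cor:K-factors-smooth}, every finite factor of~$\K$ is smooth. In particular every length-$24$ factor lies in~$\cC^3$, so $\K$ is length-$24$ locally~$C^3$ in the sense of Definition~\ref{def:24-local}. The recursion in the statement is exactly the one in Theorem~\ref{thm:infinite-factorization} with $t=1$ and $s_0=0$. That theorem then gives the following directly:
\begin{enumerate}[(i)]
\item the recursion is defined for every $k$, with $2\leqslant d_k\leqslant24$;
\item every cumulative factor $\K_1\cdots\K_{s_k}=P_{s_k}$ is a shuffle square.
\end{enumerate}
By Theorem~\ref{thm:main-intro}, each $s_k$ therefore lies in $\EPK\setminus\{4,8\}$. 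Since $d_k\geqslant2$, the endpoints $s_k$ strictly increase, so they form an infinite subsequence of the increasing enumeration of $\EPK\setminus\{4,8\}$.

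\textbf{The initial endpoints.} These are a finite computation. Run Listing~\ref{lst:residual-test} on successive factors of $\K$, each starting at $s_{k-1}+1$, with lengths $d=2,4,\ldots,24$; only even $d$ need testing, by Lemma~\ref{lem:even-necessary}.

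The first step is already determined by the theory. Theorem~\ref{thm:main-intro} makes $P_{16}$ the shortest shuffle-square prefix, so $d_1=16$ and $s_1=16$. Every later $d_k$ needs a genuine recognizer call on a factor of $\K$. No earlier result predicts whether that factor's shortest shuffle-square prefix has length $18$ or $2$. For instance, the factor after position $16$ begins $\word{22}$, and $\word{22}$ is itself a shuffle square.

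This is the step I expect to be the main obstacle. It is not conceptually hard, but it depends on correctly extracting the letters of $\K$ up to position about $136$ and checking each minimal block. Getting $s_2=34$ means exhibiting an $18$-letter block, $\K_{17}\cdots\K_{34}$, with no shorter even shuffle-square prefix. The block $\word{22}$ cannot appear at position~$17$, so I must first confirm that $\K_{17}\K_{18}\ne\word{22}$ from the expansion of~$\K$.

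\textbf{Properness.} It suffices to exhibit one omitted term. By Theorem~\ref{thm:main-intro}, $20\in\EPK\setminus\{4,8\}$. The listed endpoints jump from $16$ to $34$, so $20$ is a shuffle-square prefix length that is not an endpoint. The same holds for $24$, $28$ and $40$. This gives properness and the closing remark.
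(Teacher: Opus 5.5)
Your proposal is correct and follows the paper's proof: Corollary~\ref{cor:K-factors-smooth} with Theorem~\ref{thm:infinite-factorization} at $t=1$, then Theorem~\ref{thm:main-intro} to place each $s_k$ in $\EPK\setminus\{4,8\}$, and a recognizer computation for the listed endpoints. The one difference is properness: the paper obtains it without the computed value $s_2=34$, since $P_{20}=P_{16}\word{1221}$ and $\word{1221}$ has no nonempty shuffle-square prefix (Example~\ref{ex:1221} plus parity), whereas you read it off the jump $16\to34$. Also correct your illustrative remark: $\K_{17}\K_{18}=\word{12}$, not $\word{22}$ (the block $\word{22}$ is $\K_{35}\K_{36}$, giving $d_3=2$), though this slip does not affect the argument.
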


\begin{proof}
Corollary~\ref{cor:K-factors-smooth} makes~$\K$ length-$24$ locally~$C^3$. Theorem~\ref{thm:infinite-factorization}, with~$X=\K$ and $t=1$, proves that the~recursion is defined, gives the~block-length bounds, and makes every cumulative prefix a~shuffle square. Hence $s_k\in\EPK\setminus\{4,8\}$ for~every $k\geqslant1$ by~Theorem~\ref{thm:main-intro}.

The~initial values above are obtained by~successively applying the~exact recognizer in~Listing~\ref{lst:residual-test} to~the~even-length candidate blocks.

Finally, $s_1=16$ and $P_{20}=P_{16}\word{1221}$. The~word $\word{1221}$ has no nonempty shuffle-square prefix by~Example~\ref{ex:1221}, so~$20$ is not a~recursive endpoint, although $P_{20}$ is a~shuffle square by~Theorem~\ref{thm:main-intro}. Hence the~subsequence is proper.
\end{proof}

\begin{example}
The~first block is $P_{16}$ from~Example~\ref{ex:first-decomposition}.  The~next block, written with~positions relative to~that block, is
\begin{align*}
  \K_{17}\cdots\K_{34}=\word{122121121221121121}.
\end{align*}
The~support $I=\{1,2,3,4,5,6,7,8,12\}$ and its complement both induce the~root $\word{122121121}$. The~third block is simply $\K_{35}\K_{36}=\word{22}$, with~root~$\word{2}$. Hence the~first three cumulative endpoints are $16,34,36$.

Applying Listing~\ref{lst:residual-test} successively to~each shorter even-length candidate verifies that none works at~any displayed step.

Together with~Lemma~\ref{lem:even-necessary}, this excludes every shorter block.
\end{example}

\section{Morphisms reflecting shuffle squares}\label{sec:morphisms}

\noindent
In~this section the~alphabets need not be binary. Let $A,B$ be finite alphabets, and extend the~notation $\SSq$ to~the~nonempty shuffle squares over whichever alphabet is under consideration. A~morphism $h:A^*\to B^*$ is \emph{nonerasing} if $h(\word{a})\ne\eps$ for~every $\word{a}\in A$. Every such morphism preserves shuffle squares: applying $h$ to~the~two copies of~a~root gives a~decomposition of~the~image. We say that $h$ \emph{reflects shuffle squares} if the~converse also holds, that is,
\begin{align}\label{eq:morph:reflection}
 h(w)\in\SSq\quad\Longleftrightarrow\quad w\in\SSq
 \qquad(w\in A^*).
\end{align}
The~empty word causes no exception to~this convention, since neither side holds for~$w=\eps$.

We first obtain a~necessary condition and a~complete answer when the~alphabets have the~same cardinality. For~larger target alphabets, we then derive criteria using the~ordered-graph method of~Grytczuk, Pawlik, and Ruci\'nski~\cite{GrytczukPawlikRucinskiPreprint}. The~graph representation and the~path--cycle lemma used below are taken from~that work. Regular bipartite matchings and odd-power reduction will allow us to~extract a~shuffle decomposition of~the~source word.

\subsection{Odd powers and private symbols}

\begin{lemma}\label{lem:morph:odd-powers}
Let $k_{\word{a}}$ be a~positive odd integer for~each $\word{a}\in A$. The~morphism $D:A^*\to A^*$ given by~$D(\word{a})=\word{a}^{k_{\word{a}}}$ reflects shuffle squares.
\end{lemma}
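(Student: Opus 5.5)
The forward implication needs no work: $D$ is nonerasing, so $w\in\SSq$ gives $D(w)\in\SSq$ by the remark preceding the statement. For the converse, suppose $D(w)\in\SSq$ with a partition of positions into supports $I\mathbin{\dot\cup}J$ inducing a common root. Write $w=\word{a}_1\cdots\word{a}_n$. Then $D(w)$ is the concatenation of blocks $\word{a}_p^{k_{\word{a}_p}}$, one block for each source position $p$. The aim is to color each source position $p$ red or blue so that the red and blue subwords of~$w$ coincide.

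The key construction is to use the canonical form from the proof of Proposition~\ref{prop:run-recognizer}. We may assume $i_\ell<j_\ell$ for all $\ell$, so the $\ell$-th letter of the first copy lies to the left of the $\ell$-th letter of the second copy. We may also assume that, within each run of $D(w)$, first-copy positions precede second-copy positions. A run of $D(w)$ is a maximal union of consecutive equal source blocks, so each block is a contiguous subinterval of a run.

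I would then color each block by majority. Since $k_{\word{a}_p}$ is odd, block $p$ contains strictly more positions of one copy than of the other. Put $p\in I'$ if the first copy has the majority in block $p$, and $p\in J'$ otherwise. We then need $w|_{I'}=w|_{J'}$. A cleaner route is to encode a root occurrence letter by letter: the root $x$ of $D(w)$ decomposes by letters $\word{b}$, and for each letter the $\word{b}$-positions of the two copies are matched in order. Because the image alphabet equals the source alphabet and letter $\word{b}$ only comes from blocks of $\word{b}$, we can track, for each letter $\word{b}$ and each $\word{b}$-block, how many of its positions go to each copy.

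The main obstacle is to prove that the majority coloring gives equal subwords, or, failing that, to find a different extraction. The fallback is a parity or odd-power reduction: let $x$ be the root and consider the subword of $D(w)$ formed by first-copy positions in blocks where the two copies are unbalanced. I would try to cancel matched pairs within a block: a pair consisting of one first-copy and one second-copy position in the same block can be removed from both copies simultaneously only if they correspond in the root, which is not automatic. Instead I would use the ordered matching of all $\word{b}$-positions (first copy's $t$-th $\word{b}$ to second copy's $t$-th $\word{b}$) and view each block as a vertex. This gives a bipartite multigraph between first-copy and second-copy incidences, in which every block has odd degree in total.

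I would then extract a sub-multigraph in which each block keeps exactly one edge endpoint, by peeling off pairs of edges, and use the ordering to check that the induced source words agree. This is where the \emph{regular bipartite matchings and odd-power reduction} announced at the start of the section enter. Concretely, I would first reduce to all $k_{\word{a}}=3$ or to the general case by induction on $\sum(k_{\word{a}}-1)$: show that $\word{a}\mapsto\word{a}^{k}$ reflects shuffle squares if $\word{a}\mapsto\word{a}^{k-2}$ does, by deleting two positions of one copy-pair within a single block while preserving equality. That deletion step is the crux, and I would attempt it first using the canonical-twins normal form.
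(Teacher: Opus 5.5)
The forward direction is fine, but the reflection direction is not proved. The majority coloring is only proposed, and it is in fact false, even for decompositions in your canonical form.

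Take $w=(\word{ab})^4$ with $k_{\word{a}}=k_{\word{b}}=3$, so that $D(w)=(\word{a}^3\word{b}^3)^4$ consists of eight blocks of length~$3$. The supports $I=\{1,2,4,5,6,7,8,13,14,16,17,18\}$ and $J=\{3,9,10,11,12,15,19,20,21,22,23,24\}$ both induce $\word{a}^2\word{b}^3\word{a}^4\word{b}^3$. They satisfy $i_\ell<j_\ell$ and place first-copy positions before second-copy positions in every run. The block splits (first copy, second copy) are $(2,1),(3,0),(2,1),(0,3),(2,1),(3,0),(0,3),(0,3)$. Majority therefore sends source positions $\{1,2,3,5,6\}$ to one copy and $\{4,7,8\}$ to the other, which do not even have equal sizes. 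The rule ignores the offset between the copies inherited from earlier blocks of the same letter.

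Your fallback stalls at the step you yourself call the crux. In the block at positions $7,8,9$, positions $7,8$ carry indices $6,7$ of the first copy, inside the run $\word{a}^4$ of the root. Position~$9$ carries index~$2$ of the second copy, inside the run $\word{a}^2$. Hence no two positions of this block can be deleted with the two copies remaining equal, and deletions in different blocks must be coordinated. Selecting whole matched pairs with exactly one endpoint per block would achieve this. However, in the multigraph whose vertices are blocks and whose edges are matched pairs, every vertex has degree $k_{\word{a}}$, and regularity alone does not guarantee such a perfect selection. Your plan does not say how the linear order would be used to obtain it.

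The missing idea is a selection rule applied identically to both copies. Write $k_{\word{a}}=2s_{\word{a}}+1$ and retain, in each copy, the occurrences of~$\word{a}$ whose rank among the $\word{a}$'s of that copy is congruent to $s_{\word{a}}+1$ modulo~$k_{\word{a}}$. Since both copies spell the same root, the retained subwords coincide automatically; equivalently, you keep whole matched pairs.

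A prefix of a copy containing $t$ occurrences of~$\word{a}$ contains $F(t)=\lfloor (t+s_{\word{a}})/k_{\word{a}}\rfloor$ retained ones. Consider a boundary after $j$ blocks of~$\word{a}$, where the copies have used $u+v=k_{\word{a}}j$ occurrences. Write $u=k_{\word{a}}t+r$ with $0\leqslant r<k_{\word{a}}$; separating the cases $r\leqslant s_{\word{a}}$ and $r>s_{\word{a}}$ gives $F(u)+F(v)=j$. Hence exactly one retained position lies in each block, and the retained positions form a shuffle decomposition of~$w$. No canonical form, graph, or induction on $\sum(k_{\word{a}}-1)$ is needed. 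In the example above, the rule keeps ranks $2$ and~$5$ of each letter and yields the supports $\{1,2,5,6\}$ and $\{3,4,7,8\}$ of~$w$.
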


\begin{proof}
Only reflection requires proof. Suppose that $D(w)$ is partitioned into two copies of~a~word~$R$. Write $k_{\word{a}}=2s_{\word{a}}+1$. In~each copy of~$R$, number the~occurrences of~$\word{a}$ separately and retain exactly those whose numbers are congruent to~$s_{\word{a}}+1$ modulo~$k_{\word{a}}$. Make this selection for~every letter. The~two retained subwords are identical.

We show that exactly one position survives in~each original block~$\word{a}^{k_{\word{a}}}$. At~a~boundary between original blocks, suppose that $j$ blocks of~letter~$\word{a}$ have been processed. If the~two copies have used $u$ and~$v$ occurrences of~$\word{a}$, then $u+v=k_{\word{a}} j$. The~number of~retained occurrences in~a~prefix containing $t$ occurrences is
\begin{align*}
 F_{\word{a}}(t)=\left\lfloor\frac{t+s_{\word{a}}}{k_{\word{a}}}\right\rfloor.
\end{align*}
For~all nonnegative $u,v$ with $u+v=k_{\word{a}} j$, one has
\begin{align}\label{eq:morph:rounding}
 F_{\word{a}}(u)+F_{\word{a}}(v)=j.
\end{align}
Indeed, write $u=k_{\word{a}} t+r$ with $0\le r<k_{\word{a}}$. If $r\le s_{\word{a}}$, the~two remaining floor terms are both zero. If $r>s_{\word{a}}$, they are $1$ and~$-1$. Thus the~total number of~retained positions increases by~exactly one while an~original block of~letter~$\word{a}$ is read. The~retained word is therefore~$w$, with a~partition into two identical subwords.
\end{proof}

\begin{definition}\label{def:morph:private}
A~symbol $\word{b}\in B$ is \emph{private to~$\word{a}$} for~a~morphism~$h$ if $\word{b}$ occurs in~$h(\word{a})$ and does not occur in~$h(\word{c})$ for~any $\word{c}\in A\setminus\{\word{a}\}$.
\end{definition}

\begin{proposition}\label{prop:morph:private-necessary}
If a~nonerasing morphism $h:A^*\to B^*$ reflects shuffle squares, then every $\word{a}\in A$ has a~private symbol. In~particular, $|B|\ge|A|$.
\end{proposition}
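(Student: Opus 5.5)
We argue by contraposition: assuming that some letter $\word{a}\in A$ has no private symbol, we shall produce a word $w\in A^*$ that is not a shuffle square while $h(w)$ is one. The inequality $|B|\ge|A|$ then follows at once, since distinct letters of~$A$ cannot share a private symbol; indeed, a symbol private to~$\word{a}$ occurs in no image $h(\word{c})$ with $\word{c}\ne\word{a}$.

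The natural witness is built from letters that occur an odd number of times. Since $\word{a}$ has no private symbol, every symbol $\word{b}$ occurring in $h(\word{a})$ also occurs in $h(\word{c}_{\word{b}})$ for some $\word{c}_{\word{b}}\ne\word{a}$. The plan is to choose $w$ containing $\word{a}$ exactly once. Then $w\notin\SSq$ by Lemma~\ref{lem:even-necessary}, whatever else $w$ contains. The remaining letters of~$w$ are chosen so that $h(w)$ has an even Parikh vector and actually decomposes into two equal copies.

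A concrete construction is
\begin{align*}
 w=\word{a}\,v\,v,
\end{align*}
where $v$ is a suitable word over $A\setminus\{\word{a}\}$. Even this is not immediately a shuffle square of the image, since the single $h(\word{a})$ must be split between the two copies. A cleaner route is to work with ordinary squares. Consider
\begin{align*}
 w=\word{a}\,\word{c}_1\word{c}_2\cdots\word{c}_m\,\word{a}\,\word{c}_1\cdots\word{c}_m
\end{align*}
with $\word{a}$ appearing twice. This does not give an odd count, so the single-occurrence idea is the right one. We therefore seek $w$ with one $\word{a}$ and several other letters such that $h(w)$ is a shuffle square.

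To realize this, put $h(\word{a})=\word{b}_1\cdots\word{b}_p$ and let $w=\word{a}\,\word{c}_{\word{b}_1}\cdots\word{c}_{\word{b}_p}\,x$. The idea is to match each letter $\word{b}_i$ of $h(\word{a})$ against an occurrence of $\word{b}_i$ inside $h(\word{c}_{\word{b}_i})$. Interleaving then supplies the remaining letters in pairs, for instance by taking $x$ so that each $\word{c}$ appears an odd number of times.

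The main obstacle is arranging the shuffle decomposition of $h(w)$ explicitly while keeping $\word{a}$ at odd multiplicity. The first thing to try is the simplest witness
\begin{align*}
 w=\word{a}\,u\,\word{a}\,u'
\end{align*}
with $\word{a}$ occurring twice but placed so that $w$ fails only through an order obstruction, in the style of Example~\ref{ex:1221}. Failing that, we would fall back on a parity argument with three copies, $w=\word{a}\,u^{3}$, tuned so that $h(\word{a})$ is absorbed into $h(u)$. If these explicit attempts fail, we would invoke the ordered-graph path--cycle lemma of \cite{GrytczukPawlikRucinskiPreprint}, promised in the text, to certify $h(w)\in\SSq$.
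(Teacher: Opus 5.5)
There is a genuine gap: you never construct a witness, and your main strategy cannot work in general. You want $\word{a}$ to occur exactly once in $w$, and your fallback $w=\word{a}u^3$ also has a single $\word{a}$. But then $h(w)$ can be forced to have an odd letter multiplicity. Take $A=\{\word{a},\word{c}\}$, $h(\word{a})=\word{1}$, $h(\word{c})=\word{11}$. Here $\word{a}$ has no private symbol. Yet $|h(w)|_{\word{1}}=|w|_{\word{a}}+2|w|_{\word{c}}$ is odd whenever $|w|_{\word{a}}$ is odd, so $h(w)\notin\SSq$ by Lemma~\ref{lem:even-necessary}. So in general the witness must contain an even number of $\word{a}$'s, and the fact that $w\notin\SSq$ has to come from an order obstruction, not from parity. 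Your other candidate, $\word{a}c_1\cdots c_m\word{a}c_1\cdots c_m$, is an ordinary square, hence itself a shuffle square. Finally, the path--cycle lemma (Lemma~\ref{lem:morph:path-cycle}) is a necessary condition on nest-free matchings. It is used to show words are \emph{not} shuffle squares and cannot certify $h(w)\in\SSq$.

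The idea you list only as the ``first thing to try'' --- two occurrences of $\word{a}$ with an obstruction in the style of Example~\ref{ex:1221} --- is the right one, and carrying it out is the whole proof. Put $V=h(\word{a})$. Choose a nonempty $z$ over $A\setminus\{\word{a}\}$ such that every symbol of $V$ occurs in $h(z)$; this is exactly where the absence of a private symbol is used. Then $V$ is a subword of $U=h(z)^k$ for $k=|V|$, taking one needed symbol from each successive copy of $h(z)$. Let $T$ be the complementary subword of $U$. The word $w=\word{a}z^{2k}\word{a}$ has $h(w)=VUUV$, which is a shuffle square with root $VTV$:
\begin{itemize}
\item one copy takes the initial $V$, the $T$-positions of the first $U$, and the $V$-positions of the second $U$;
\item the other copy takes the $V$-positions of the first $U$, the $T$-positions of the second $U$, and the final $V$.
\end{itemize}
Meanwhile $w\notin\SSq$. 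A root would contain exactly one $\word{a}$. Since the two $\word{a}$'s are the first and last letters of $w$, the root would begin and end with that single $\word{a}$, forcing length $1$. But the root would have length $1+k|z|>1$. Your deduction of $|B|\ge|A|$ is fine.
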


\begin{proof}
Suppose that $\word{a}$ has no private symbol, and put $V=h(\word{a})$. Choose a~nonempty word $z\in(A\setminus\{\word{a}\})^*$ such that every symbol occurring in~$V$ occurs in~$h(z)$. For~some positive~$k$, the~word~$V$ is a~subword of~$U=h(z)^k$: for~example, one can use at~most one required symbol from~each successive copy of~$h(z)$. Let~$T$ be the~complementary subword, so that $U$ is a~shuffle of~$V$ and~$T$.

The~word $VUUV$ is a~shuffle square with root~$VTV$. One copy uses the~initial~$V$, the~$T$-positions of~the~first~$U$, and the~$V$-positions of~the~second~$U$. The~other uses the~$V$-positions of~the~first~$U$, the~$T$-positions of~the~second~$U$, and the~terminal~$V$. However,
\begin{align*}
 VUUV=h(\word{a}z^{2k}\word{a}),
\end{align*}
and $\word{a}z^{2k}\word{a}$ is not a~shuffle square. Its only two occurrences of~$\word{a}$ are at~its ends. A~root would contain exactly one~$\word{a}$, which would have to~be both its first and its last letter, although the~root has length greater than~one. This is a~contradiction. Private symbols of~different letters are distinct, giving the~cardinality assertion.
\end{proof}

\begin{proposition}\label{prop:morph:private-odd}
Suppose that for~each $\word{a}\in A$ there is a~private symbol $\word{b}_{\word{a}}$ whose multiplicity $k_{\word{a}}=|h(\word{a})|_{\word{b}_{\word{a}}}$ is odd. Then $h$ reflects shuffle squares.
\end{proposition}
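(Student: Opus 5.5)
Only reflection requires proof, since every nonerasing morphism preserves shuffle squares. The plan is to reduce to Lemma~\ref{lem:morph:odd-powers} by projecting onto the private symbols.

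Let $P=\{\word{b}_{\word{a}}:\word{a}\in A\}\subseteq B$. By Definition~\ref{def:morph:private} these symbols are pairwise distinct: $\word{b}_{\word{a}}$ occurs in $h(\word{a})$ and in no other image. Let $\pi:B^*\to P^*$ be the erasing projection that deletes every symbol outside~$P$. Then $\pi(h(\word{c}))=\word{b}_{\word{c}}^{k_{\word{c}}}$ for every $\word{c}\in A$, because $h(\word{c})$ contains no $\word{b}_{\word{a}}$ with $\word{a}\ne\word{c}$. Let $\iota:A^*\to P^*$ be the bijective renaming $\word{a}\mapsto\word{b}_{\word{a}}$. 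Then
\begin{align*}
 \pi\circ h=\iota\circ D,
\end{align*}
where $D(\word{a})=\word{a}^{k_{\word{a}}}$ is the odd-power morphism of Lemma~\ref{lem:morph:odd-powers}.

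Now suppose $h(w)\in\SSq$, with positions split into two supports $I,J$ inducing the same root~$R$. Restrict both supports to the positions carrying symbols of~$P$. Deleting the same symbols from two equal words leaves equal words, so $I\cap\pi^{-1}$-positions and $J\cap\pi^{-1}$-positions induce the common word $\pi(R)$ and partition the positions of $\pi(h(w))$.

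The one point to check is nonemptiness. Since $w\ne\eps$ (the empty word is excluded by convention) and every $k_{\word{a}}\geqslant1$, the word $\pi(h(w))=\iota(D(w))$ is nonempty. Hence $\iota(D(w))$ is a shuffle square. Renaming letters bijectively preserves shuffle squares, as in Lemma~\ref{lem:closure-shuffle}(iii), so applying $\iota^{-1}$ shows that $D(w)\in\SSq$. Lemma~\ref{lem:morph:odd-powers} then gives $w\in\SSq$.

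I expect no real obstacle here. The substantive work lies in Lemma~\ref{lem:morph:odd-powers}, and the only things to verify are that the private symbols are distinct and that projection commutes with taking subwords, both of which are immediate.
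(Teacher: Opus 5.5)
Your proof is correct and is essentially the paper's argument. The paper uses the single erasing morphism $\rho(\word{b}_{\word{a}})=\word{a}$, which is exactly your composite $\iota^{-1}\circ\pi$, notes that $\rho\circ h$ is the odd-power morphism, and concludes by projecting the decomposition of $h(w)$ and applying Lemma~\ref{lem:morph:odd-powers}; your checks that the private symbols are distinct and that the projection is nonempty are precisely the details that argument uses implicitly.
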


\begin{proof}
Define a~morphism $\rho:B^*\to A^*$ by~$\rho(\word{b}_{\word{a}})=\word{a}$ and by~erasing the~remaining symbols. Then $(\rho\circ h)(\word{a})=\word{a}^{k_{\word{a}}}$. If $h(w)$ is a~shuffle square for~nonempty~$w$, its projection $\rho(h(w))$ is a~nonempty shuffle square. Lemma~\ref{lem:morph:odd-powers} gives $w\in\SSq$.

We record a~stronger feature of~this proof for~later use. Fix any decomposition of~$h(w)$ into two equal copies and pair corresponding positions. Within each copy retain the~occurrences of~$\word{b}_{\word{a}}$ of~ranks congruent to~$(k_{\word{a}}+1)/2$ modulo~$k_{\word{a}}$. Formula~\eqref{eq:morph:rounding}, applied at~the~boundaries of~the~images $h(\word{c})$, shows that this retains exactly one position from~every image of~$\word{a}$ and none from~other images. The~retained positions are unions of~whole pairs from~the~fixed decomposition. This selection can be performed for~any chosen subset of~the~source letters.
\end{proof}

\begin{corollary}\label{cor:morph:equal-alphabets}
Let $|A|=|B|$. A~nonerasing morphism $h:A^*\to B^*$ reflects shuffle squares if and only if
\begin{align*}
 h(\word{a})=\pi(\word{a})^{k_{\word{a}}}\qquad(\word{a}\in A),
\end{align*}
where $\pi:A\to B$ is a~bijection and every $k_{\word{a}}$ is positive and odd.
\end{corollary}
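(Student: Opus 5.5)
The proof splits into the two directions. Sufficiency (``if'') follows from Proposition~\ref{prop:morph:private-odd}. Under a bijection $\pi$ the symbol $\pi(\word{a})$ occurs only in $h(\word{a})=\pi(\word{a})^{k_{\word{a}}}$, so it is private to~$\word{a}$, and its multiplicity $k_{\word{a}}$ is odd. Alternatively, $h=\pi\circ D$ with $D$ as in Lemma~\ref{lem:morph:odd-powers}, and a letter renaming obviously reflects shuffle squares.

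For necessity, assume $h$ is nonerasing and reflects shuffle squares. The first step shows $h(\word{a})$ is a power of a single letter. By Proposition~\ref{prop:morph:private-necessary}, every $\word{a}$ has a private symbol $\word{b}_{\word{a}}$. Private symbols of distinct letters are distinct, and $|A|=|B|$, so $\word{a}\mapsto\word{b}_{\word{a}}$ is a bijection $\pi:A\to B$. Hence every symbol of $B$ is private to exactly one letter of~$A$. If $h(\word{a})$ contained some symbol other than $\pi(\word{a})$, that symbol would be $\pi(\word{c})$ for some $\word{c}\neq\word{a}$. It would then occur in $h(\word{a})$, contradicting privacy for~$\word{c}$. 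Therefore $h(\word{a})=\pi(\word{a})^{k_{\word{a}}}$ with $k_{\word{a}}\geqslant1$.

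The second step, the only one needing an idea, shows each $k_{\word{a}}$ is odd. Suppose $k_{\word{a}}$ is even for some~$\word{a}$. I want a source word $w$ that is not a shuffle square although $h(w)$ is. The natural choice is $w=\word{a}$ itself. It is not a shuffle square, since it has odd length. Its image $\pi(\word{a})^{k_{\word{a}}}$ has even length, so it is the ordinary square of $\pi(\word{a})^{k_{\word{a}}/2}$, hence a shuffle square. This contradicts~\eqref{eq:morph:reflection}. So every $k_{\word{a}}$ is odd, which completes the characterization.

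I expect no real obstacle. The main point to check is that Proposition~\ref{prop:morph:private-necessary} indeed gives distinct private symbols, which its proof records. The counting $|A|=|B|$ then forces every target symbol to be used, and that is what rules out any extra letters in $h(\word{a})$.
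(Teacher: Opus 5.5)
Your proposal is correct and follows the paper's proof. Private symbols from Proposition~\ref{prop:morph:private-necessary} exhaust $B$ and so force $h(\word{a})=\pi(\word{a})^{k_{\word{a}}}$, and the one-letter word $\word{a}$ rules out even $k_{\word{a}}$; sufficiency comes from Lemma~\ref{lem:morph:odd-powers} with relabelling, as in the paper, or equally from Proposition~\ref{prop:morph:private-odd} as you first note.
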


\begin{proof}
By~Proposition~\ref{prop:morph:private-necessary}, choose a~private symbol $\pi(\word{a})$ for~each~$\word{a}$. These exhaust~$B$, so no symbol other than $\pi(\word{a})$ can occur in~$h(\word{a})$. Its multiplicity must be odd, since otherwise the~one-letter word~$\word{a}$ would have a~shuffle-square image. The~converse follows from~Lemma~\ref{lem:morph:odd-powers} and invariance under relabelling.
\end{proof}

\begin{example}\label{ex:morph:private}
The~morphism given by~$h(\word{a})=\word{2137}$ and $h(\word{b})=\word{4137}$ reflects shuffle squares: the~private symbols $\word{2}$ and~$\word{4}$ each have multiplicity~$1$, while the~symbols $\word{1},\word{3},\word{7}$ occur in~both images. In~contrast, the~mere existence of~private symbols is not sufficient. The~morphism $\word{a}\mapsto\word{11}$, $\word{b}\mapsto\word{2}$ has private symbols for~both letters but maps~$\word{a}$ to~a~shuffle square. The~next criteria include cases in~which every symbol in~every image has even multiplicity.
\end{example}

\subsection{Extraction from ordered graphs}

An~\emph{ordered graph} is a~graph whose vertex set is equipped with a~linear order. Loops and parallel edges are allowed. For~an~edge~$e$, write $\min e$ and~$\max e$ for~its smaller and larger endpoints, respectively; these coincide when~$e$ is a~loop. Two edges form a~\emph{nest} if, after possibly interchanging their names,
\begin{align*}
 \min e<\min f\le\max f<\max e.
\end{align*}
Thus both endpoints of~$f$ lie strictly between the~endpoints of~$e$. If the~vertices are placed in~order on~a~horizontal line and edges are drawn as~semicircular arcs above it, the~arc corresponding to~$f$ lies inside the~arc corresponding to~$e$. The~weak middle inequality allows the~inner edge~$f$ to~be a~loop.

An~ordered graph is \emph{nest-free} if it contains no such pair of~edges in~the~terminology of~\cite{GrytczukPawlikRucinskiPreprint}. For~example, on~vertices $1<2<3<4$, the~edges $\{1,4\}$ and~$\{2,3\}$ form a~nest, whereas $\{1,3\}$ and~$\{2,4\}$ do not: their endpoints interleave. In~particular, nest-free graphs may contain crossing edges.

A~word is a~shuffle square precisely when its positions admit a~nest-free perfect matching that pairs equal symbols. For~completeness, pair equal-rank positions of~two copies of~a~root. Both the~smaller and the~larger endpoint of~these pairs increase with~the~rank, so no nest occurs. Conversely, order the~edges of~such a~matching by~their smaller endpoints. Their larger endpoints also increase, and the~two endpoint sequences induce identical words. Contracting consecutive equal-letter positions into runs preserves the~absence of~nests. The~degree of~each resulting vertex, counting a~loop twice, equals the~length of~its run. This is the~run-graph representation of~\cite[Proposition~1]{GrytczukPawlikRucinskiPreprint}.

We use the~following result.

\begin{lemma}[Grytczuk--Pawlik--Ruci\'nski~\cite{GrytczukPawlikRucinskiPreprint}, Lemma~1]\label{lem:morph:path-cycle}
Let an~ordered nest-free graph contain a~path~$P$ joining its first and last vertices. Every cycle vertex-disjoint from~$P$ has even length.
\end{lemma}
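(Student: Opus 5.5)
Since the statement is quoted from \cite{GrytczukPawlikRucinskiPreprint}, we could simply cite it. To keep the exposition self-contained, we would instead prove it by exhibiting a proper $2$-colouring of the vertices off the path $P$ that is compatible with every edge of $C$. This is a plan; the colouring step below is not yet checked.

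\textbf{Step 1: every cycle vertex is spanned by $P$.} Write the path as $1=p_0,p_1,\ldots,p_m=n$ in traversal order. Let $v$ be a vertex of~$C$. Then $v$ is neither $1$ nor $n$ and is not on~$P$. Since the walk along $P$ starts below $v$ and ends above it, some path edge $f=p_ip_{i+1}$ satisfies $\min f<v<\max f$. Let $S(v)$ denote the nonempty set of such spanning path edges. In particular, $C$ has no loop. A loop at $v$ would be nested inside any $f\in S(v)$, and the weak middle inequality in the definition of a nest allows exactly this. Hence a cycle of length~$1$ is excluded at once.

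\textbf{Step 2: constraints from nest-freeness.} Take an edge $e=\{x,y\}$ of $C$ with $x<y$, and any path edge~$f$.
\begin{itemize}
\item If $f\in S(x)$, then $e$ is not nested inside $f$, which forces $\max f>y$ to fail. Since $y\notin P$, we get $x<\max f<y$.
\item Symmetrically, every $f\in S(y)$ satisfies $x<\min f<y$.
\item No path edge has both endpoints in $(x,y)$, since it would be nested inside $e$.
\item No path edge spans both $x$ and $y$, since $e$ would then be nested inside it.
\end{itemize}
Consequently $S(x)\cap S(y)=\varnothing$. Moreover, every path edge meeting the open interval $(x,y)$ has exactly one endpoint there, and its other endpoint lies below $x$ or above $y$.

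\textbf{Step 3: a colouring that alternates along $C$.} A naive invariant such as the parity of $|S(v)|$ does not work. Because $P$ starts and ends outside $(x,y)$, the number of path edges with one endpoint in $(x,y)$ is even, so that parity is preserved along $e$ rather than flipped. Instead we would use the traversal order of~$P$. For each $v$ off $P$, let $\iota(v)$ be the least index $i$ such that $p_ip_{i+1}\in S(v)$. We would try to show that every cycle edge $x<y$ has $\iota(x)$ and $\iota(y)$ of opposite parity.

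The reasoning for this attempt runs as follows. By Step~2, the portion of $P$ inside $(x,y)$ consists of vertices each entered and left by edges jumping out of $[x,y]$. The first jump over $x$ lands in $(x,y)$. From there, $P$ alternately leaves to the left of $x$ or to the right of $y$, and returns, until it first jumps over~$y$. Nest-freeness applied to consecutive such excursions should force them to alternate sides, which would make the index gap $\iota(y)-\iota(x)$ odd.

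If this $\iota$-parity fails, we would first replace it by the parity of the number of path vertices lying in $(\min, \cdot)$ before the first spanning edge. If that also fails, we would use the planar picture: arcs drawn above the line, with $P$ together with a line segment below the axis forming a closed curve. Each cycle edge then crosses this curve an odd number of times, so $C$ crosses it an even number of times and therefore has even length.

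\textbf{Step 4: conclusion.} Once a $\{0,1\}$-valued function on the vertices of $C$ is shown to change at every cycle edge, traversing $C$ returns to the starting value. Hence $|C|$ is even.

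I expect Step~3 to be the main obstacle, namely choosing the right invariant and verifying that the alternation of excursions is genuinely forced by the absence of nests.
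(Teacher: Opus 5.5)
The paper gives no proof of this lemma; it quotes it from \cite{GrytczukPawlikRucinskiPreprint}. Your Steps~1, 2 and~4 are correct. Step~3, however, carries the whole content of the lemma and is not established, and each invariant you propose for it fails.

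\textbf{The $\iota$-parity claim is false.} On the vertices $1<2<\cdots<12$ take $P=1,3,6,2,4,7,10,12$ and the $4$-cycle $C=5,8,11,9$. The eleven edges
$\{1,3\},\{3,6\},\{2,6\},\{2,4\},\{4,7\},\{7,10\},\{10,12\},\{5,8\},\{8,11\},\{9,11\},\{5,9\}$
contain no nest. Yet $\iota(5)=1$ (edge $\{3,6\}$), while $\iota(8)=\iota(9)=5$ (edge $\{7,10\}$). So the cycle edges $\{5,8\}$ and $\{5,9\}$ join vertices of equal $\iota$-parity. Your excursion heuristic breaks for two reasons. Before $P$ first jumps over $y$ it cannot be to the right of $y$ at all, so there is no alternation of sides. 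And an excursion below $x$ may visit an even number of vertices: here $P$ passes $6,2,4,7$, with $2,4$ below $x=5$.

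\textbf{The other two fallbacks also fail.} The planar argument fails for the same reason as the $|S(v)|$ invariant you discarded. By Step~2, the arcs of $P$ crossing the arc of $\{x,y\}$ are exactly the $2|Q|$ path edges incident to the path vertices $Q$ in $(x,y)$. Hence every cycle edge crosses your closed curve an \emph{even} number of times. The middle fallback is too vague to check. On its natural reading (path vertices below $v$ preceding the first edge spanning $v$) it equals $\iota(v)+1$ and fails on the same example.

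\textbf{The missing idea: count how $P$ passes through vertices, not which edges span $v$.} Call an interior vertex $p_i$ of $P$ \emph{monotone} if it lies strictly between $p_{i-1}$ and $p_{i+1}$. Let $c(v)$ be the parity of the number of monotone path vertices below $v$.

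Take a cycle edge $x<y$. Your Step~2 already gives three facts: every path vertex in $(x,y)$ is interior to $P$, both of its $P$-neighbours lie outside $[x,y]$, and no path edge joins a vertex below $x$ to one above $y$. So, reading $P$ from $1$ to $n$, two consecutive vertices outside $[x,y]$ are either adjacent on $P$ (hence on the same side) or separated by exactly one vertex of $(x,y)$. The side changes precisely at those vertices of $(x,y)$ that are monotone. Since $P$ starts below $x$ and ends above $y$, an odd number of monotone path vertices lie in $(x,y)$. Hence $c(x)\ne c(y)$, and your Step~4 finishes the proof.

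In the example, the monotone vertices are $3,4,7,10$, which gives $c=0,1,0,1$ along $5,8,11,9$.
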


\begin{lemma}\label{lem:morph:run-extraction}
Let $H$ be a~word on~two distinct symbols~$\word{x},\word{y}$, containing~$\word{y}$ and beginning and ending with~$\word{x}$. Suppose that its first and last $\word{x}$-runs have odd lengths, every other $\word{x}$-run has even length, and all $\word{y}$-runs have the~same positive length~$q$. Given any nest-free perfect matching of~equal symbols on~the~positions of~$H$, one can select a~subset of~its edges that uses exactly one position from~each $\word{y}$-run and no $\word{x}$-positions.
\end{lemma}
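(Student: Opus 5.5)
The plan is to pass to the run graph and reduce the selection to a perfect matching in a regular bipartite multigraph. Contract each run of~$H$ to a single vertex, ordered as the runs occur. Each edge of the given nest-free perfect matching becomes an edge of an ordered multigraph~$G$, with an edge inside one run becoming a loop. As recalled before Lemma~\ref{lem:morph:path-cycle}, $G$ is nest-free, and the degree of each vertex (a loop counted twice) equals the length of its run. Since the matching pairs equal symbols, the edges split into an \emph{$\word{x}$-part} $G_{\word{x}}$ on the $\word{x}$-run vertices and a \emph{$\word{y}$-part} $G_{\word{y}}$ on the $\word{y}$-run vertices, and no edge joins the two parts. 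The goal becomes: find edges of $G_{\word{y}}$ covering each $\word{y}$-vertex exactly once. Lifting each chosen run-graph edge back to the original matching edge it came from then uses exactly one position from each $\word{y}$-run and no $\word{x}$-positions.

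First I would produce a path joining the extreme vertices. Because $H$ begins and ends with~$\word{x}$, the first and last vertices of~$G$ are $\word{x}$-vertices. By hypothesis they are the only vertices of odd degree in~$G_{\word{x}}$, since loops contribute evenly. The handshake lemma, applied to the component of~$G_{\word{x}}$ containing the first vertex, forces that component to contain the other odd vertex. So $G_{\word{x}}$ contains a path~$P$ from the first to the last vertex of~$G$, and $P$ uses no $\word{y}$-vertex.

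Next I would show that $G_{\word{y}}$ is loopless and bipartite. For loops: a $\word{y}$-vertex~$v$ lies strictly between the first and last vertices, so some edge~$e$ of~$P$ satisfies $\min e<v<\max e$. Such an edge exists because consecutive vertices of~$P$ start left of~$v$, end right of~$v$, and never equal~$v$. A loop at~$v$ together with~$e$ would form a nest, which is forbidden. Every cycle of~$G_{\word{y}}$ is vertex-disjoint from~$P$, so Lemma~\ref{lem:morph:path-cycle} gives it even length. If parallel edges are not treated as cycles there, a $2$-cycle is even anyway. Hence $G_{\word{y}}$ has no odd cycle and is bipartite. Since every $\word{y}$-run has length~$q$ and there are no loops, $G_{\word{y}}$ is a $q$-regular bipartite multigraph with $q\ge1$.

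By K\"onig's theorem (Hall's condition holds for regular bipartite multigraphs), $G_{\word{y}}$ has a perfect matching, and lifting it finishes the proof. The main obstacle I expect is the careful use of Lemma~\ref{lem:morph:path-cycle}: making sure that loops and parallel edges in the multigraph setting are handled, which is why I exclude loops directly through the nest argument with~$P$. A lesser point is checking that contraction preserves nest-freeness exactly as stated.
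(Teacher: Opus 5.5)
Your proposal is correct and follows the paper's proof essentially step for step. You contract the runs, obtain the extreme path in $G_{\word{x}}$ from the handshake parity argument, exclude loops in $G_{\word{y}}$ via an edge of that path spanning the loop vertex, invoke Lemma~\ref{lem:morph:path-cycle} for bipartiteness, and apply Hall/K\"onig to the $q$-regular bipartite multigraph before lifting. Your explicit justification of the spanning edge and your remark that $2$-cycles from parallel edges are even anyway are small clarifications of points the paper leaves implicit.
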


\begin{proof}
Contract the~runs. The~resulting nest-free multigraph is the~disjoint union of~a~graph $G_{\word{x}}$ on~the~$\word{x}$-runs and a~graph $G_{\word{y}}$ on~the~$\word{y}$-runs, with~the~vertex order inherited from~$H$. Exactly the~first and last vertices of~$G_{\word{x}}$ have odd degree. They belong to~the~same connected component, since every component has an~even number of~odd-degree vertices. Hence $G_{\word{x}}$ contains a~path~$P$ joining the~extreme vertices of~the~whole graph.

The~graph $G_{\word{y}}$ has no loop: a~path from~the~first to~the~last vertex must have an~edge spanning the~vertex of~any such loop, which would create a~nest. By~Lemma~\ref{lem:morph:path-cycle}, $G_{\word{y}}$ also has no odd cycle of~length at~least three. Thus $G_{\word{y}}$ is bipartite. This is the~parity mechanism used in~the~proof of~\cite[Theorem~1]{GrytczukPawlikRucinskiPreprint}.

Every vertex of~$G_{\word{y}}$ has degree~$q$. If its bipartition is $L\mathbin{\dot\cup}R$, then counting edges gives $|L|=|R|$ and, for~each $S\subseteq L$,
\begin{align*}
 q|S|\le q|N(S)|.
\end{align*}
Hall's theorem therefore gives a~perfect matching of~$G_{\word{y}}$. For~each edge of~this matching choose its corresponding edge in~the~original position matching. Exactly one position in~each $\word{y}$-run is selected, as~required.
\end{proof}

\begin{theorem}\label{thm:morph:multirun}
For~each $\word{a}\in A$, choose distinct symbols $\word{x}_{\word{a}},\word{y}_{\word{a}}$, with~the~pairs $\{\word{x}_{\word{a}},\word{y}_{\word{a}}\}$ disjoint for~different letters. Let
\begin{align}\label{eq:morph:multirun}
 h(\word{a})=\word{x}_{\word{a}}^{p_{\word{a},0}}\word{y}_{\word{a}}^{q_{\word{a}}}\word{x}_{\word{a}}^{p_{\word{a},1}}\word{y}_{\word{a}}^{q_{\word{a}}}
 \cdots \word{y}_{\word{a}}^{q_{\word{a}}}\word{x}_{\word{a}}^{p_{\word{a},k_{\word{a}}}},
\end{align}
where $k_{\word{a}}$ is positive and odd, $q_{\word{a}}\ge1$, the~two endpoint exponents $p_{\word{a},0},p_{\word{a},k_{\word{a}}}$ are positive and odd, and all interior exponents $p_{\word{a},i}$ are positive and even. Then $h$ reflects shuffle squares.
\end{theorem}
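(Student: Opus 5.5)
The plan is to reduce to Lemma~\ref{lem:morph:odd-powers} by extracting, from any shuffle decomposition of $h(w)$, a sub-decomposition that sees each image $h(\word{a})$ as an odd power of a single symbol. The extraction will be done letter by letter with Lemma~\ref{lem:morph:run-extraction}.

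\textbf{Setup.} Only reflection requires proof, since nonerasing morphisms preserve shuffle squares. Let $w$ be nonempty and suppose $h(w)\in\SSq$. Using the nest-free matching characterization recalled above, fix a nest-free perfect matching $M$ on the positions of $h(w)$ whose edges join equal symbols. The pairs $\{\word{x}_{\word{a}},\word{y}_{\word{a}}\}$ are disjoint for different letters. Hence every edge of $M$ lies inside the set of positions carrying $\word{x}_{\word{a}}$ or $\word{y}_{\word{a}}$ for a single letter $\word{a}$.

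\textbf{Step 1: restrict to one letter.} For each letter $\word{a}$ occurring in $w$, say $n_{\word{a}}\geqslant1$ times, let $H_{\word{a}}$ be the subword of $h(w)$ on these positions. Then
\begin{align*}
 H_{\word{a}}=h(\word{a})^{n_{\word{a}}}.
\end{align*}
The restriction $M_{\word{a}}$ of $M$ to these positions is still a perfect matching of equal symbols. It is nest-free, because a nest in $M_{\word{a}}$ would also be a nest in $M$, and deleting positions preserves the relative order of the others.

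\textbf{Step 2: check the run structure of $H_{\word{a}}$.} When consecutive copies of $h(\word{a})$ are concatenated, the terminal run $\word{x}_{\word{a}}^{p_{\word{a},k_{\word{a}}}}$ merges with the initial run $\word{x}_{\word{a}}^{p_{\word{a},0}}$. The merged run has length equal to a sum of two odd numbers, so it is even. Therefore $H_{\word{a}}$ satisfies every hypothesis of Lemma~\ref{lem:morph:run-extraction}:
\begin{enumerate}[(i)]
\item it begins and ends with $\word{x}_{\word{a}}$, and it contains $\word{y}_{\word{a}}$ because $k_{\word{a}}\geqslant1$;
\item its first and last $\word{x}_{\word{a}}$-runs have odd length;
\item all its other $\word{x}_{\word{a}}$-runs are either interior runs $p_{\word{a},i}$ or merged runs, hence even;
\item every $\word{y}_{\word{a}}$-run has length exactly $q_{\word{a}}$.
\end{enumerate}
The lemma then yields a subset of edges of $M_{\word{a}}$ that covers exactly one position in each $\word{y}_{\word{a}}$-run and no $\word{x}_{\word{a}}$-position. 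In particular, each image of $\word{a}$ receives exactly $k_{\word{a}}$ selected positions, all carrying $\word{y}_{\word{a}}$.

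\textbf{Step 3: assemble and apply Lemma~\ref{lem:morph:odd-powers}.} Take the union $M'\subseteq M$ of these edge sets over all letters. As a subset of a nest-free matching, $M'$ is nest-free, and it is a perfect matching of equal symbols on the set of positions it covers. By the characterization of shuffle squares through nest-free matchings, the subword of $h(w)$ on these positions is a shuffle square. Reading the occurrences of $w$ in order, this subword is
\begin{align*}
 \word{y}_{w_1}^{k_{w_1}}\word{y}_{w_2}^{k_{w_2}}\cdots,
\end{align*}
where $w_1w_2\cdots$ are the letters of $w$. After the relabelling $\word{y}_{\word{a}}\mapsto\word{a}$, which is legitimate because the $\word{y}_{\word{a}}$ are distinct, this word is $D(w)$ for the odd-power morphism $D(\word{a})=\word{a}^{k_{\word{a}}}$. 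Lemma~\ref{lem:morph:odd-powers} then gives $w\in\SSq$.

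\textbf{Main obstacle.} The only delicate point is Step 2. One must check that the concatenated word $H_{\word{a}}$, not just the single image $h(\word{a})$, has the parity profile required by Lemma~\ref{lem:morph:run-extraction}. One must also check that restricting $M$ to one symbol pair preserves both perfectness and nest-freeness. Once these are settled, the rest is the same projection mechanism already used in Proposition~\ref{prop:morph:private-odd}.
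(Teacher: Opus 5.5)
Your proposal is correct and matches the paper's proof step for step: fix a nest-free perfect matching of $h(w)$, project onto each pair $\{\word{x}_{\word{a}},\word{y}_{\word{a}}\}$ to obtain $h(\word{a})^{n_{\word{a}}}$ whose merged boundary $\word{x}_{\word{a}}$-runs are even, apply Lemma~\ref{lem:morph:run-extraction}, take the union of the selected edges, and relabel to get a nest-free matching on $D(w)$ before invoking Lemma~\ref{lem:morph:odd-powers}. You also check explicitly that restricting the matching preserves perfectness and nest-freeness, which the paper leaves implicit; this adds detail but does not change the argument.
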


\begin{proof}
Suppose that $h(w)$ is a~shuffle square, and fix a~nest-free perfect matching~$M$ of~its positions. For~a~letter~$\word{a}$ occurring $n_{\word{a}}>0$ times in~$w$, erase all symbols except $\word{x}_{\word{a}},\word{y}_{\word{a}}$. Since the~pairs of~symbols are disjoint, the~remaining word is $h(\word{a})^{n_{\word{a}}}$ and inherits a~perfect matching from~$M$.

At~a~boundary between successive copies of~$h(\word{a})$, the~two adjacent $\word{x}_{\word{a}}$-runs merge and have even total length $p_{\word{a},k_{\word{a}}}+p_{\word{a},0}$. Consequently, in~$h(\word{a})^{n_{\word{a}}}$ precisely the~first and last $\word{x}_{\word{a}}$-runs have odd lengths, all other $\word{x}_{\word{a}}$-runs have even lengths, and all $\word{y}_{\word{a}}$-runs have length~$q_{\word{a}}$. Apply Lemma~\ref{lem:morph:run-extraction}. It selects complete edges of~$M$, with exactly one endpoint in~each $\word{y}_{\word{a}}$-run.

Perform this selection for~every letter occurring in~$w$. The~union is still a~subset of~$M$ and therefore contains no nests. Each $\word{y}_{\word{a}}$-run is a~contiguous interval of~the~original word $h(w)$, and each image of~$\word{a}$ contributes exactly~$k_{\word{a}}$ selected positions. Read the~selected positions in~order and relabel $\word{y}_{\word{a}}$ as~$\word{a}$. We obtain a~nest-free perfect matching of~equal letters on
\begin{align*}
 D(w),\qquad D(\word{a})=\word{a}^{k_{\word{a}}}.
\end{align*}
Thus $D(w)$ is a~shuffle square. Lemma~\ref{lem:morph:odd-powers} gives $w\in\SSq$.
\end{proof}

\begin{theorem}\label{thm:morph:three-runs}
Let
\begin{align*}
 h(\word{a})=\word{x}_{\word{a}}^{p_{\word{a}}}\word{y}_{\word{a}}^{q_{\word{a}}}\word{x}_{\word{a}}^{r_{\word{a}}},\qquad p_{\word{a}},q_{\word{a}},r_{\word{a}}\ge1,
\end{align*}
where the~symbols satisfy the~disjointness assumptions of~Theorem~\ref{thm:morph:multirun}. Then $h$ reflects shuffle squares if and only if for~every $\word{a}\in A$ at~least one of~$p_{\word{a}},q_{\word{a}},r_{\word{a}}$ is odd.
\end{theorem}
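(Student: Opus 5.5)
The necessity direction is immediate, and sufficiency reduces to the two extraction mechanisms already established, applied letter by letter to one fixed matching and then combined. For \emph{necessity}, suppose that $p_{\word{a}},q_{\word{a}},r_{\word{a}}$ are all even for some $\word{a}$. Then $h(\word{a})=\word{x}_{\word{a}}^{p_{\word{a}}}\word{y}_{\word{a}}^{q_{\word{a}}}\word{x}_{\word{a}}^{r_{\word{a}}}$ is a shuffle square with root $\word{x}_{\word{a}}^{p_{\word{a}}/2}\word{y}_{\word{a}}^{q_{\word{a}}/2}\word{x}_{\word{a}}^{r_{\word{a}}/2}$: assign the first half of every run to one copy and the second half to the other. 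The one-letter word $\word{a}$ is not a shuffle square by Lemma~\ref{lem:even-necessary}, so \eqref{eq:morph:reflection} fails at $w=\word{a}$.

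For \emph{sufficiency}, I would sort each letter into one of three cases.
\begin{enumerate}[(a)]
\item $q_{\word{a}}$ is odd. Then $\word{y}_{\word{a}}$ is private to $\word{a}$ with odd multiplicity.
\item $q_{\word{a}}$ is even and $p_{\word{a}}+r_{\word{a}}$ is odd. Then $\word{x}_{\word{a}}$ is private to $\word{a}$ with odd multiplicity $p_{\word{a}}+r_{\word{a}}$.
\item $q_{\word{a}}$ is even and $p_{\word{a}},r_{\word{a}}$ are both odd. Then $h(\word{a})$ has exactly the shape \eqref{eq:morph:multirun} with $k_{\word{a}}=1$, odd endpoint exponents, and no interior exponents.
\end{enumerate}
If every letter falls into one case type, Proposition~\ref{prop:morph:private-odd} or Theorem~\ref{thm:morph:multirun} applies directly. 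The actual work is the mixed situation, where different letters fall into different cases.

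To handle it, suppose $h(w)$ is a shuffle square and fix one nest-free perfect matching $M$ of its positions, as given by the run-graph representation. For letters in cases (a) and (b), use the ``stronger feature'' recorded in the proof of Proposition~\ref{prop:morph:private-odd}. In each copy, retain the occurrences of the chosen private symbol whose ranks are congruent to $(k+1)/2$ modulo its odd multiplicity $k$. By~\eqref{eq:morph:rounding}, this selects a union of whole edges of $M$ containing exactly one position from each image of $\word{a}$. For letters in case (c), restrict $M$ to the symbols $\word{x}_{\word{a}},\word{y}_{\word{a}}$ and apply Lemma~\ref{lem:morph:run-extraction}, exactly as in the proof of Theorem~\ref{thm:morph:multirun}. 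This selects whole edges of $M$ with exactly one endpoint in each $\word{y}_{\word{a}}$-run, that is, one position per image of $\word{a}$. The symbol pairs are disjoint, so these selections involve disjoint sets of positions.

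The union of all the selections is a subset of $M$, hence still nest-free and consisting of equal-symbol pairs. Each image $h(\word{a})$ contributes exactly one selected position. Reading the selected positions in order and relabelling each selected symbol as the source letter $\word{a}$ gives a nest-free perfect matching of equal letters on $w$ itself (that is, $D(w)$ with all $k_{\word{a}}=1$). So $w\in\SSq$, and Lemma~\ref{lem:morph:odd-powers} is not even needed beyond the trivial exponent-$1$ case.

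The main obstacle is exactly this combination step. One has to check that the private-symbol selection, which is defined by ranks within each copy, and the Hall-matching selection are both expressed as sets of edges of the same matching $M$. Then the union stays nest-free and still pairs equal symbols after relabelling. I expect this to go through as stated, since both earlier proofs explicitly produce subsets of a fixed decomposition and never modify $M$.
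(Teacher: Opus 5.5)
Your proposal is correct and follows the paper's proof essentially step for step. It uses the same halving argument for necessity and the same three-way split ($q_{\word{a}}$ odd; $q_{\word{a}}$ even with $p_{\word{a}}+r_{\word{a}}$ odd; $q_{\word{a}}$ even with $p_{\word{a}},r_{\word{a}}$ both odd). It then combines the rank selection from Proposition~\ref{prop:morph:private-odd} with Lemma~\ref{lem:morph:run-extraction} inside one fixed nest-free matching $M$, and concludes directly that $w\in\SSq$, just as the paper does.

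The combination step you flag is harmless. Any nest-free perfect matching of equal symbols is the equal-rank pairing of the two copies it determines, so the rank selection is automatically a union of edges of $M$.
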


\begin{proof}
If all three exponents for~some~$\word{a}$ are even, divide each of~the~three runs of~$h(\word{a})$ equally between two copies. Then $h(\word{a})$ is a~shuffle square although~$\word{a}$ is not, so reflection fails.

Conversely, assume the~stated parity condition and fix a~nest-free perfect matching~$M$ associated with~a~decomposition of~$h(w)$. If $q_{\word{a}}$ is odd, use the~private symbol~$\word{y}_{\word{a}}$ and the~rank selection in~the~proof of~Proposition~\ref{prop:morph:private-odd}. If $q_{\word{a}}$ is even but $p_{\word{a}}+r_{\word{a}}$ is odd, use~$\word{x}_{\word{a}}$ instead. In~each case the~selection retains complete edges of~$M$ and exactly one position from~each image of~$\word{a}$.

In~the~remaining case, $q_{\word{a}}$ and $p_{\word{a}}+r_{\word{a}}$ are even. The~hypothesis forces both $p_{\word{a}}$ and~$r_{\word{a}}$ to~be odd. Projecting onto~$\{\word{x}_{\word{a}},\word{y}_{\word{a}}\}$ and applying Lemma~\ref{lem:morph:run-extraction}, as~in~the~proof of~Theorem~\ref{thm:morph:multirun}, again selects complete edges of~$M$ with exactly one position from~each image of~$\word{a}$.

Combining these selections over all source letters produces a~submatching of~$M$, with exactly one position from~each image block. Relabel each selected symbol by~its unique source letter. The~result is a~nest-free perfect matching of~equal letters on~$w$, proving that $w\in\SSq$.
\end{proof}

\begin{example}\label{ex:morph:three-runs}
In~particular, $\word{a}\mapsto\word{1221}$ and $\word{b}\mapsto\word{3443}$ define a~morphism reflecting shuffle squares, although all four target symbols have even multiplicity in~their respective images. The~same conclusion holds for~$\word{a}\mapsto\word{1}^3\word{2}^4\word{1}^5$ and $\word{b}\mapsto\word{3}\word{4}^6\word{3}^3$. The~lengths of~the~images, and the~two endpoint exponents within an~image, need not coincide.
\end{example}

\begin{example}\label{ex:morph:multirun}
Consider the~images
\begin{align*}
 H_{\word{a}}=\word{1}\word{2}^{2}\word{1}^{4}\word{2}^{2}
       \word{1}^{2}\word{2}^{2}\word{1}^{3},\qquad
 H_{\word{b}}=\word{3}\word{4}^{2}\word{3}^{2}\word{4}^{2}
       \word{3}^{4}\word{4}^{2}\word{3}.
\end{align*}
They have lengths $16$ and~$14$ and satisfy Theorem~\ref{thm:morph:multirun} with $k_{\word{a}}=k_{\word{b}}=3$ and $q_{\word{a}}=q_{\word{b}}=2$. Hence the~word
\begin{align*}
 W=H_{\word{a}}H_{\word{b}}H_{\word{b}}H_{\word{a}}
\end{align*}
of~length~$60$ is not a~shuffle square: otherwise $\word{abba}$ would be one. Nevertheless, the~multiplicities of~$\word{1},\word{2},\word{3},\word{4}$ in~$W$ are respectively $20,12,16,12$, and the~projections onto~$\{\word{1},\word{2}\}$ and $\{\word{3},\word{4}\}$ are the~ordinary squares $H_{\word{a}}^2$ and~$H_{\word{b}}^2$. The~obstruction is therefore detected by~the~order of~the~different image blocks, rather than by~either projection alone.
\end{example}

\begin{remark}\label{rem:morph:attribution}
For~a~single source letter, the~non-square assertion for~odd powers of~the~images in~Theorem~\ref{thm:morph:multirun} is already covered by~\cite[Corollary~3]{GrytczukPawlikRucinskiPreprint}. The~reflection statements above additionally handle arbitrary arrangements of~images of~different letters. The~extraction argument uses one fixed matching throughout, so the~separate selections remain compatible in~the~original word. None of~the~proofs in~this section requires a~finite computational check.
\end{remark}

\section{Local repairs and largest twins in~doubly binary words}
\label{sec:repairs}

\noindent
The~finite reduction in~Theorem~\ref{thm:C1-reduction} also controls how far a~doubly binary word can be from a~shuffle square. We first allow arbitrary deletions, without assuming an~even Parikh vector, and then locate single repairs close to~an~endpoint. Recall that $\cC^1$ consists precisely of~the~binary words avoiding $\word{111}$ and~$\word{222}$.

\subsection{Deletion distance and largest twins}

For~this section put $\SSq_{\,\varepsilon}=\SSq\cup\{\eps\}$, and define
\begin{align*}
 \delta(w)=\min\bigl\{|D|:D\subseteq\{1,\ldots,|w|\},\
 w|_{\{1,\ldots,|w|\}\setminus D}\in\SSq_{\,\varepsilon}\bigr\}.
\end{align*}
Thus the~retained positions need not be consecutive. A~pair of~\emph{twins} in~a~word~$w$ consists of~two identical subwords supported on~disjoint sets of~positions. The~positions in~each subword need not be consecutive, but are read in~their original order. The~twins are called \emph{perfect} if their supports together cover all positions of~$w$. Let $t(w)$ denote the~maximum common length of~a~pair of~twins in~$w$, with $t(\eps)=0$. Directly from~these definitions,
\begin{align}\label{repair:eq:twins-distance}
  2t(w)+\delta(w)=|w|.
\end{align}

The~parameter $t(w)$ is the~largest-twins parameter studied by~Axenovich, Person, and Puzynina \cite{AxenovichPersonPuzynina}. They proved that every binary word of~length~$n$ has twins covering $n-o(n)$ positions. Deletion distance to~shuffle squares is also studied in~\cite[Sections~5 and~6.2]{GrytczukPawlikRucinskiPreprint}. Theorem~\ref{repair:thm:distance} gives an~exact formula for~this distance throughout~$\cC^1$.

\begin{lemma}\label{repair:lem:residual-deletion}
If a~residual word~$q$ is reachable after reading~$w$ in~\eqref{eq:residual-recurrence2}, then $\delta(w)\leqslant|q|$.
\end{lemma}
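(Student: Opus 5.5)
The plan is to read off a partial coloring from the path in recurrence~\eqref{eq:residual-recurrence2} that reaches~$q$, and then delete exactly the positions that are "unmatched". By the interpretation in the proof of Proposition~\ref{prop:residual-test}, a path $\eps=q_0,q_1,\ldots,q_n=q$ with $q_{i}\in\cQ_i(w)$ corresponds to an assignment of all $n=|w|$ positions to two colors. The partial subwords $x$ (shorter) and $xq$ (longer) are then prefix-comparable. Concretely, I will track by induction on~$i$ not only the residual word but also the set $T_i$ of positions carrying the letters of~$q_i$ in the longer copy. These are the positions appended to the longer copy and not yet cancelled by a letter of the shorter copy. The invariant is $|T_i|=|q_i|$ and $w|_{T_i}=q_i$.

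First I would verify the invariant. A transition $q\mapsto q\word{w}_{i+1}$ appends position $i+1$ to~$T$. A transition $\word{w}_{i+1}r\mapsto r$ removes the earliest position of~$T$, and that position is now matched with position $i+1$. If the residual was empty and a letter is appended, the roles of the longer and shorter copy swap; this is harmless because only the residual and its supporting positions matter. In every case, the positions outside $T_i$ are partitioned into matched pairs. Listing these pairs in order of matching gives two increasing position sequences, one in each copy, that induce the same word~$x$. This is exactly the pairing bookkeeping already used in the proof of Proposition~\ref{prop:residual-test}.

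Next I take $D=T_n$. Then $|D|=|q|$. The retained positions $\{1,\ldots,n\}\setminus D$ are exactly the matched positions, and these split into two supports that both induce the common prefix~$x$. Hence the retained word is either empty or a shuffle square with root~$x$, that is, it lies in $\SSq_{\,\varepsilon}$. The definition of~$\delta$ then gives $\delta(w)\leqslant|q|$.

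The main obstacle is making sure the color swaps at empty residuals do not break monotonicity of the supports. I would handle this by not tracking colors at all and tracking only matched pairs. Each new pair $(j,i+1)$ has $j$ the earliest unmatched position and $i+1$ the current position, so both endpoints increase over time. By the ordering argument in the proof of Proposition~\ref{prop:run-recognizer} (left and right endpoints each increasing), the pairs yield a nest-free equal-letter matching on the retained positions, which suffices.
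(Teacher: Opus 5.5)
Your proof is correct and is the paper's argument. The paper takes the partial coloring that splits $w$ into $v$ and $vq$ and deletes the positions of the final copy of $q$, which is exactly your set $T_n$; your queue of unmatched positions spells out, without tracking which color is longer, the path-to-coloring correspondence that the paper takes from the proof of Proposition~\ref{prop:residual-test}.
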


\begin{proof}
The~corresponding partial coloring partitions~$w$ into subwords $v$ and~$vq$, in~one order or the~other. Delete the~positions supporting the~final copy of~$q$. The~two remaining subwords both equal~$v$.
\end{proof}

Put
\begin{align}\label{repair:eq:odd-exceptions}
 \mathfrak{u}=\word{12211211221},\qquad
 \overline{\mathfrak{u}}=\word{21122122112}.
\end{align}
We need two finite checks in~addition to~the~periodic-ray description already proved. Here a~\emph{minimal shuffle-square prefix} means a~$C^1$ shuffle square with no proper nonempty shuffle-square prefix. Its minimality concerns prefixes, not arbitrary subwords.

\begin{lemma}\label{repair:lem:finite-deletion}
The~following assertions hold.
\begin{enumerate}[(i)]
\item Among the~$6609$ words in $\bigcup_{n=0}^{38}\cB_n^{(1)}$, every nonempty word of~even length has deletion distance~$2$. Every word of~odd length has deletion distance~$1$, except $\mathfrak{u}$ and~$\overline{\mathfrak{u}}$ in~\eqref{repair:eq:odd-exceptions}, whose distance is~$3$.
\item There are $2980$ minimal shuffle-square prefixes of~length at~most~$38$. Among their $5768$ concatenations $Tr\in\cC^1$ with $r\in\{\mathfrak{u},\overline{\mathfrak{u}}\}$, every word~$Tr$ has deletion distance~$1$.
\end{enumerate}
\end{lemma}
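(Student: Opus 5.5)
This is a finite computational statement, so the plan is to reduce each assertion to an explicit exhaustive check. Each check should use only the exact tools already proved, namely Proposition~\ref{prop:residual-test}, Listing~\ref{lst:residual-test}, and recurrence~\eqref{eq:Ck-bad-tree}. The main addition is an exact procedure for computing $\delta(w)$. By~\eqref{repair:eq:twins-distance}, $\delta(w)\geqslant1$ whenever $|w|$ is odd, and $\delta(w)\geqslant2$ whenever $w$ is nonempty of even length but not a shuffle square. Deletions come in the parity $\delta(w)\equiv|w|\pmod 2$, because the retained word has even length. Every word in $\cB_n^{(1)}$ with $n\geqslant1$ is itself not a shuffle square, since it is its own prefix. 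Hence, for~(i), it suffices to exhibit for each word a deletion set of size $2$ or~$1$ respectively. For the two exceptional words we must show that no single deletion works.

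For the upper bounds I will use Lemma~\ref{repair:lem:residual-deletion}. Run recurrence~\eqref{eq:residual-recurrence2} without pruning along each word of the tree. This costs nothing extra, since the tree is built letter by letter and $\cQ_{n}(wa)$ is obtained from $\cQ_n(w)$ in one step. Then record the minimum residual length reached. If this minimum is $1$ for odd length, or $2$ for even length, the bound is attained. For any tree word where the minimum residual is longer, fall back to the direct test: try every deletion set of the required size and apply Listing~\ref{lst:residual-test} to the result. For odd length, single deletions number at most $37$ per word. For even length, pairs number at most $\binom{38}{2}$. Over $6609$ words this is a small computation. The tree sizes are given by the vector~\eqref{eq:C1-bad-counts} up to $n=38$, and their sum should be checked to equal $6609$.

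For $\mathfrak u$ and $\overline{\mathfrak u}$ (length $11$), I will check all $11$ single deletions with the exact recognizer. Each of the resulting length-$10$ words must be rejected. A triple deletion must also be exhibited to show that the distance is exactly $3$; again the minimum residual length from Lemma~\ref{repair:lem:residual-deletion} should supply it. By complement symmetry (Lemma~\ref{lem:closure-shuffle}(iii) and Lemma~\ref{lem:smooth-closure}(iii)), checking $\mathfrak u$ suffices.

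For~(ii), the minimal shuffle-square prefixes of length at most $38$ are exactly the shuffle-square $C^1$ one-letter extensions of words in $\cB_{n-1}^{(1)}$ with $n\leqslant38$. This is the same enumeration as in Listing~\ref{lst:C2-extension}, with $k=1$. Count them and confirm $2980$. Then form all $T\mathfrak u$ and $T\overline{\mathfrak u}$, keep those in $\cC^1$ (the test is only at the junction, since $\cC^1$ means avoiding $\word{111}$ and $\word{222}$), and confirm $5768$. Each such word has odd length, so $\delta\geqslant1$. For each, search single deletions with the recognizer until one succeeds. A natural first guess is that a deletion inside the $r$-part combined with a shuffle of $T$ works. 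I will not rely on that guess, however, and will search exhaustively.

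The main obstacle is not conceptual but verification hygiene. The deletion-distance claim needs the lower bounds, which come for free from parity and non-membership, together with certified upper bounds via explicit deletion sets. I would therefore have the program output, for each word, a witnessing deletion set, so that every claim is independently checkable by a single call to the recognizer.
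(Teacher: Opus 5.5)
Your proposal is correct and matches the paper's proof. The paper likewise enumerates single or paired deletions over the regenerated tree and tests them with the exact recognizer, takes the lower bounds from parity and from the fact that nonempty tree words are not shuffle squares, uses the same $T=v\word{a}$ enumeration for (ii), and handles $\overline{\mathfrak u}$ by complementation. Your residual-length shortcut via Lemma~\ref{repair:lem:residual-deletion} is only an optional accelerator; it does work for $\mathfrak u$, where residuals of length~$3$ such as $\word{121}$ are reachable, whereas the paper exhibits the deletion of positions $1,2,11$ directly. Note one small index slip: it is $\cQ_{n+1}(w\word{a})$ that is obtained from $\cQ_n(w)$.
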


\begin{proof}
Generate the~sets $\cB_n^{(1)}$ by~\eqref{eq:Ck-bad-tree}. Their level sizes in~\eqref{eq:C1-bad-counts} sum to~$6609$ through level~$38$. For~each nonempty even-length word, enumerate pairs of~deleted positions and apply Proposition~\ref{prop:residual-test} to~the~remaining word. At~odd lengths enumerate single deleted positions. The~only failures are the~two words in~\eqref{repair:eq:odd-exceptions}. Deleting positions $1,2,11$ from~$\mathfrak{u}$ leaves $\word{21121122}$, with two copies of~$\word{2112}$ on supports $(1,2,3,7)$ and $(4,5,6,8)$. Complementation handles~$\overline{\mathfrak{u}}$. Parity and the~definition of~$\cB_n^{(1)}$ give all lower bounds in~(i).

For~(ii), the~complete list consists of~the~words
\begin{align*}
 T=v\word{a}\in\cC^1\cap\SSq,
 \qquad v\in\cB_n^{(1)},\quad 0\leqslant n\leqslant37,
 \quad \word{a}\in\cA.
\end{align*}
This gives $2980$ words. Test $Tr\in\cC^1$ for~each of~the~two choices of~$r$, retaining $5768$ pairs, and enumerate one deleted position in~each retained concatenation. Every test succeeds. These finite enumerations, including all negative single-deletion tests for~$u$ and~$\overline u$, are reproduced by~\texttt{verify\_local\_repairs.py}.
\end{proof}

Listing~\ref{lst:repair-deletion} records the~finite checks in~Lemma~\ref{repair:lem:finite-deletion}. The~predicate \texttt{SQ0} extends the~recognizer from~Listing~\ref{lst:residual-test} by~accepting the~empty word.

\begin{lstlisting}[style=pseudocodestyle,float=htbp,
caption={Finite deletion and absorption checks},
label={lst:repair-deletion}]
function SQ0(w)
  return (w = epsilon or SHUFFLE-SQUARE(w))

function DELETABLE(w, d)
  for each d-element subset I of {1,...,length(w)} do
    v <- w with the positions in I deleted
    if SQ0(v) then return true
  return false

B <- BAD-PREFIX-TREE(1, 38)
assert sum(size(B[n]) : n = 0,...,38) = 6609
E <- empty set
for w in union(B[1],...,B[38]) do
  if length(w) is even then
    assert DELETABLE(w, 2)
  else if not DELETABLE(w, 1) then
    insert w into E
assert E = {(*@$\word{12211211221}$@*), (*@$\word{21122122112}$@*)}
for r in E do
  assert DELETABLE(r, 3)

M <- {v*(*@$\word{a}$@*) : v in union(B[0],...,B[37]), (*@$\word{a}$@*) in {(*@$\word{1}$@*),(*@$\word{2}$@*)},
             v*(*@$\word{a}$@*) is in C^1 and SHUFFLE-SQUARE(v*(*@$\word{a}$@*))}
assert size(M) = 2980
P <- {(T,r) : T in M, r in E, T*r is in C^1}
assert size(P) = 5768
for (T,r) in P do
  assert DELETABLE(T*r, 1)
\end{lstlisting}

\begin{theorem}\label{repair:thm:distance}
For~every $w\in\cC^1$,
\begin{align}\label{repair:eq:distance}
\delta(w)=
\begin{cases}
0,&w\in\SSq_{\,\varepsilon},\\
2,&|w|\text{ is even and }w\notin\SSq_{\,\varepsilon},\\
1,&|w|\text{ is odd and }w\notin\{\mathfrak{u},\overline{\mathfrak{u}}\},\\
3,&w\in\{\mathfrak{u},\overline{\mathfrak{u}}\}.
\end{cases}
\end{align}
Consequently, if $|w|=2m+1$, then $t(w)=m$, except that $t(\mathfrak{u})=t(\overline{\mathfrak{u}})=4$. If $|w|=2m$, then $t(w)=m$ for~$w\in\SSq_{\,\varepsilon}$ and $t(w)=m-1$ otherwise.
\end{theorem}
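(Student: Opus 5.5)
The plan is to prove the distance formula by the greedy prefix factorization, and then read off $t(w)$ from \eqref{repair:eq:twins-distance}.

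\emph{Lower bounds.} Lemma~\ref{lem:even-necessary} forces the retained word to have even length, so $\delta(w)$ has the parity of $|w|$. Hence $\delta(w)\geqslant 2$ for even $w\notin\SSq_{\,\varepsilon}$ and $\delta(w)\geqslant1$ for odd $w$. For $\mathfrak u$ and $\overline{\mathfrak u}$ the value~$3$ is already given by Lemma~\ref{repair:lem:finite-deletion}(i).

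\emph{Upper bounds.} Given $w\in\cC^1$, repeatedly remove the shortest nonempty shuffle-square prefix, as in Corollary~\ref{cor:C1-factorization} but without assuming even parity. This gives $w=S_1\cdots S_kr$ with each $S_i$ a minimal shuffle-square prefix and $r$ having no nonempty shuffle-square prefix. By factoriality $r\in\cB^{(1)}_{|r|}$. Since deletions performed inside $r$ can be concatenated with the perfect twins of $S_1\cdots S_k$ (Lemma~\ref{lem:closure-shuffle}(i)), we get $\delta(w)\leqslant\delta(r)$. Two cases then arise.

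\begin{enumerate}[(a)]
\item If $|r|\leqslant38$, Lemma~\ref{repair:lem:finite-deletion}(i) gives $\delta(r)\leqslant2$ for even $|r|$ and $\delta(r)\leqslant1$ for odd $|r|$, unless $r\in\{\mathfrak u,\overline{\mathfrak u}\}$.
\item If $|r|>38$, then $r$ lies on one of the $422$ periodic rays from the proof of Theorem~\ref{thm:C1-reduction}. Its length-$38$ prefix $u$ is a seed reaching residual $\word{12}$ (or $\word{21}$), and the loop $\word{12}\xrightarrow{\word{1122}}\word{12}$ is available. Reading a partial block from the residual $\word{12}$ by the explicit coloring (first and third letters to the longer copy) gives a reachable residual of length at most $3$. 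For even $|r|$, the parity of residual length forces this residual to have length $2$. For odd $|r|$, I expect the partial blocks $\word{1}$, $\word{112}$ to give residuals $\word{2}$ or $\word{21}$-type words of length $1$ or $3$. I would pick the transition achieving length $1$, for instance appending $\word{1}$ to the shorter copy after $\word{12}\to\word{2}$... If this fails, I would fall back on a direct check along one period. Lemma~\ref{repair:lem:residual-deletion} then gives $\delta(r)\leqslant|q|$.
\end{enumerate}

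The remaining case is $r\in\{\mathfrak u,\overline{\mathfrak u}\}$ with $w\neq r$, so $k\geqslant1$. Here I would not delete inside $r$ alone. Instead, use $S_kr$ with $S_k$ a minimal shuffle-square prefix $T$. Since Corollary~\ref{cor:C1-factorization} shows these blocks are unbounded, the hard part is $|T|>38$. Lemma~\ref{repair:lem:finite-deletion}(ii) covers $|T|\leqslant38$, giving $\delta(Tr)=1$, hence $\delta(w)\leqslant\delta(S_1\cdots S_{k-1})+\delta(Tr)=1$.

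For $|T|>38$, $T$ is a seed, followed by periods, followed by an exit word $\word{12}$ or $\word{1121}$. I would try to show that $Tr$ with the exit replaced still carries a residual of length $1$. Concretely, the residual $\word{12}$ after the periodic part, followed by the exit and then $r$, behaves like the case $|T'|\leqslant38$ obtained by pumping down: removing one period $\word{1122}$ does not change the reachable residual. The step I intend to justify is that the one-deletion witness for $T'r$ found in Lemma~\ref{repair:lem:finite-deletion}(ii) can be lifted to $Tr$ by reinserting periods colored by the loop. This needs that the witness's residual trajectory passes through $\word{12}$ at a period boundary. That is the main obstacle, and I would handle it by choosing the seed-level deletion witness so that the deleted position lies in the exit or in $r$, where it remains valid after pumping.

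Finally, the formula for $t(w)$ is \eqref{repair:eq:twins-distance} solved as $t=(|w|-\delta)/2$. This gives $t(\mathfrak u)=(11-3)/2=4$.
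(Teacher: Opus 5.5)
Your overall structure is the paper's: parity lower bounds, the greedy reduction $\delta(w)\leqslant\delta(r)$, Lemma~\ref{repair:lem:finite-deletion}(i) for short remainders, the periodic rays for long ones, and Lemma~\ref{repair:lem:finite-deletion}(ii) when $|T|\leqslant38$. Case~(b) does work once you make it explicit. The coloring from the example gives residual length~$3$ at odd phases. The letter transitions $\word{12}\xrightarrow{\word{1}}\word{2}\xrightarrow{\word{1}}\word{21}\xrightarrow{\word{2}}\word{1}\xrightarrow{\word{2}}\word{12}$ instead give residual length $1$ at odd phases and $2$ at even phases, which is what Lemma~\ref{repair:lem:residual-deletion} needs. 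No fallback is required.

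The genuine gap is the case $r\in\{\mathfrak u,\overline{\mathfrak u}\}$ with $|T|>38$. Your plan is to pump $T=U(\word{1122})^me$ down to some $T'$ with $|T'|\leqslant38$ and lift a deletion witness from Lemma~\ref{repair:lem:finite-deletion}(ii). This fails for two reasons.
\begin{enumerate}[(i)]
\item The periods come after the length-$38$ seed~$U$, so pumping down stops at $Ue$, of length $40$ or $42$. Lemma~\ref{repair:lem:finite-deletion}(ii) does not cover these lengths, and nothing you have established lets you pump inside the seed.
\item Even when a short $T'$ exists, Lemma~\ref{repair:lem:finite-deletion}(ii) only asserts that \emph{some} single deletion works. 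It says nothing about where the deleted position lies, or whether the witnessing coloring passes through residual $\word{12}$ at the seed boundary. So you cannot ``choose'' the witness as you propose; that choice is exactly the unproved step.
\end{enumerate}

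The missing idea is to bypass Lemma~\ref{repair:lem:finite-deletion}(ii) entirely. By the loop in \eqref{eq:C1-pump-one}, residual $\word{12}$ is reachable after $U(\word{1122})^m$. It therefore suffices to verify directly the four transitions $\word{12}\xrightarrow{(er)^{\setminus j}}\eps$ for $e\in\{\word{12},\word{1121}\}$ and $r\in\{\mathfrak u,\overline{\mathfrak u}\}$. The paper records $j=11,1,3,1$ in Table~\ref{repair:tab:absorption}, and complementation covers the other $211$ rays. These are four finite checks independent of $U$ and~$m$. Each completes a coloring of $Tr$ after one deletion, giving $\delta(Tr)\leqslant1$ and hence $\delta(w)\leqslant1$.
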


\begin{proof}
First consider a~word with no nonempty shuffle-square prefix. Lengths at~most~$38$ are covered by~Lemma~\ref{repair:lem:finite-deletion}(i). By~the~proof of~Theorem~\ref{thm:C1-reduction}, every longer such word is a~prefix of~one of~the~$422$ periodic rays. For~the~$211$ rays with period~$\word{1122}$, the~length-$38$ seed~$U$ reaches residual~$\word{12}$. The~letter transitions
\begin{align}\label{repair:eq:short-residual-cycle}
 \word{12}\xrightarrow{\word{1}}\word{2}
 \xrightarrow{\word{1}}\word{21}
 \xrightarrow{\word{2}}\word{1}
 \xrightarrow{\word{2}}\word{12}
\end{align}
show that each subsequent odd-length prefix reaches a~residual of~length~$1$, and each even-length prefix reaches one of~length~$2$. Here the~arrows denote the~letter transitions of~\eqref{eq:residual-recurrence2}. They are not restricted to~run boundaries. The~complementary rays have the~same property. Lemma~\ref{repair:lem:residual-deletion} proves the~required upper bounds. No nonempty word under consideration is a~shuffle square, so length parity gives equality.

For~an~arbitrary word, successively remove shortest nonempty shuffle-square prefixes to~obtain
\begin{align*}
 w=S_1\cdots S_k r,
 \qquad r\in\cB_{|r|}^{(1)}.
\end{align*}
This greedy construction does not require even letter multiplicities. If $r=\eps$, then $w\in\SSq_{\,\varepsilon}$. Otherwise, concatenating the~unchanged shuffle-square blocks with a~repaired remainder gives $\delta(w)\leqslant\delta(r)$. Since each $S_i$ has even length, the~only remaining case is
\begin{align*}
 k\geqslant1,\qquad r\in\{\mathfrak{u},\overline{\mathfrak{u}}\}.
\end{align*}
It suffices to~repair $S_k r$ with one deletion. Write $T=S_k$. This is a~minimal shuffle-square prefix. If $|T|\leqslant38$, apply Lemma~\ref{repair:lem:finite-deletion}(ii).

If $|T|>38$, its first~$38$ letters form a~seed of~one of~the~periodic rays, and the~rest follows that ray until its first exit. Hence, up to~complementation,
\begin{align*}
 T=U(\word{1122})^m e,
 \qquad m\geqslant0,\quad e\in\{\word{12},\word{1121}\},
\end{align*}
with residual~$\word{12}$ reachable after $U(\word{1122})^m$. For~a~word~$z$, let $z^{\setminus j}$ denote deletion of~its $j$th position. Direct application of~\eqref{eq:residual-recurrence2} gives the~four transitions in~Table~\ref{repair:tab:absorption}. Thus deleting the~indicated position from~$er$ completes a~shuffle-square coloring of~$Tr$. Complementation covers the~other~$211$ rays. Concatenation with $S_1\cdots S_{k-1}$ proves $\delta(w)\leqslant1$.

All odd lengths require an~odd number of~deletions, and all even-length non-shuffle-squares require at~least two. The~two exceptional words were checked in~Lemma~\ref{repair:lem:finite-deletion}(i). These observations establish~\eqref{repair:eq:distance}. The~statements about twins follow from~\eqref{repair:eq:twins-distance}.
\end{proof}

\begin{table}[ht]
\centering
\caption{The~four finite transitions absorbing the~odd exceptional remainders. In~each row, $\word{12}\xrightarrow{(er)^{\setminus j}}\eps$}
\label{repair:tab:absorption}
\begin{tabular}{@{}ccc@{}}
\toprule
$e$&$r$&$j$\\
\midrule
$\word{12}$&$u$&$11$\\
$\word{12}$&$\overline u$&$1$\\
$\word{1121}$&$u$&$3$\\
$\word{1121}$&$\overline u$&$1$\\
\bottomrule
\end{tabular}
\end{table}

\subsection{One repair near an~endpoint}

An~adjacent transposition interchanges two consecutive different letters. Deleting or inserting a~pair below always means deleting or inserting two consecutive equal letters. Unlike Theorem~\ref{repair:thm:distance}, the~next result assumes that \emph{both} letter multiplicities are even.

\begin{theorem}\label{repair:thm:local}
Let $w\in\cC^1\setminus\SSq$ have even Parikh vector and be nonempty.
\begin{enumerate}[(i)]
\item An~adjacent transposition within the~last $\min\{10,|w|\}$ positions makes~$w$ a~shuffle square.
\item Deleting an~equal pair within the~last $\min\{11,|w|\}$ positions makes~$w$ a~shuffle square.
\item An~adjacent transposition within the~last $\min\{15,|w|\}$ positions makes~$w$ a~shuffle square that still belongs to~$\cC^1$.
\item Write $w=xy$, with $|y|\leqslant6$. For~some such factorization and some $\word{a}\in\cA$, the~word $x\word{a}\word{a} y$ belongs to~$\SSq\cap\cC^1$.
\end{enumerate}
The~constants $10,11,15,6$ above are all optimal. The~corresponding assertions at~the~left endpoint hold by~reversal.
\end{theorem}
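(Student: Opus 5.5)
The plan is to reduce everything to a repair of the terminal remainder, using the greedy factorization of Corollary~\ref{cor:C1-factorization}. The difficulty is that the remainder sits at the left of a suffix, not at the right end. So I would apply Corollary~\ref{cor:C1-factorization} to $\rev(w)$: reversal preserves $\cC^1$ (Lemma~\ref{lem:smooth-closure}(ii)) and shuffle squares (Lemma~\ref{lem:closure-shuffle}(ii)). This gives
\begin{align*}
 w=r\,S_k\cdots S_1,
\end{align*}
where each $S_i$ is a shuffle square, $r\neq\eps$ since $w\notin\SSq$, and $\rev(r)$ is one of the $230$ even-Parikh words of Table~\ref{tab:C1-even-bad}. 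A repair confined to $r$ only makes $r$ a shuffle square, and concatenation then finishes. However, $r$ lies at the left, and I need repairs near the right end. The way out is to build the reduction from the right: apply the argument directly to $w$ using shortest shuffle-square suffixes. This yields $w=S_1\cdots S_k r'$, whose last factor $r'$ has no nonempty shuffle-square \emph{suffix} and whose reversal lies in the $230$-word list. A repair within the last $10$, $11$, $15$ positions, or an insertion before a suffix of length at most $6$, will then follow from a repair inside $r'$ near its right end, provided such a repair exists.

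Since $r'$ can be as long as $34$, a window of $10$ means the repair cannot always stay inside $r'$ alone. The main case distinction is therefore whether $|r'|$ exceeds the window. If it does not, we repair $r'$ and leave the blocks untouched. If it does, the repair lies in the tail of $r'$ and we only need it to work for $r'$ itself. So the reduction is: for each of the $230$ words (reversed), find a transposition in the last $10$ positions, respectively an equal-pair deletion in the last $11$ positions, making it a shuffle square. There is a subtlety. The repaired word need not be a shuffle square by itself; it only has to become one together with the preceding block $S_{k}$. Moreover, for (ii) and (iii) the repaired word must remain in $\cC^1$ after concatenation. Hence the finite check should be done on the $230$ words first. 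Wherever a check fails in isolation, I would fall back to checking concatenations $S_k r'$ with minimal shuffle-square suffixes $S_k$. This mirrors Lemma~\ref{repair:lem:finite-deletion}(ii), with the unbounded $S_k$ of the periodic rays handled by the exit-and-residual transitions, as in Table~\ref{repair:tab:absorption}.

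For (iii) the extra requirement is that the result stays in $\cC^1$. That requirement depends only on letters adjacent to the transposed pair, so it is checked locally, including the boundary with $S_k$. For (iv), insertion of $\word{a}\word{a}$ preserves the Parikh parity. I would test all insertion points $x\word{a}\word{a}y$ with $|y|\le6$ on the same finite family, again using residual transitions to absorb the long ray blocks. Optimality of $10,11,15,6$ follows by exhibiting, for each constant, one exception among the $230$ (or one of the concatenations) for which every repair in a window one shorter fails. These are direct finite checks with Listing~\ref{lst:residual-test}. The left-endpoint versions follow by reversal.

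The main obstacle is the interaction with the preceding block $S_k$ when it is long and lies on a periodic ray. There, the infinite family must be reduced to finitely many residual transitions. I would first try to show that every needed repair can be found entirely inside $r'$ as an isolated shuffle square. If that fails for some words, I would track the residual reachable after $S_1\cdots S_{k-1}$ plus the ray seed, namely $\word{12}$ or $\word{21}$, and verify finitely many transitions $\word{12}\xrightarrow{z}\eps$ for the repaired tails $z$.
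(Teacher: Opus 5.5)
Your reduction starts from a misreading of Corollary~\ref{cor:C1-factorization}, and this breaks the setup. That greedy factorization removes shortest shuffle-square \emph{prefixes}. Its terminal remainder $r$ is therefore already the right-end suffix of $w$, and $r$ itself, not its reversal, is one of the $230$ words of Table~\ref{tab:C1-even-bad}.

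Your proposed fix, removing shortest shuffle-square suffixes, does not produce $w=S_1\cdots S_kr'$. It leaves the remainder as a prefix, $w=r'S_k\cdots S_1$, which is your first display again. In fact a factorization with a right-end remainder free of nonempty shuffle-square suffixes need not exist. Take $w=P_8=\word{12211212}$, which satisfies the hypotheses. Its shortest nonempty shuffle-square suffix is $\word{1212}$. So $r'$ would have to be a nonempty even-Parikh suffix of length $2$, namely $\word{12}$, whose Parikh vector is odd.

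Consequently your finite check targets the wrong family. Repairing $\rev(r)$ near its right end is the same as repairing $r$ near its \emph{left} end, and neither endpoint assertion needs that. What is needed is a repair of each $r\in\mathcal R$ inside its last $\min\{10,|r|\}$, $\min\{11,|r|\}$, or $\min\{15,|r|\}$ positions, or before a suffix of length at most $6$. That is exactly Table~\ref{repair:tab:finite-local}. The left-end version then follows by applying this to $\rev(w)$. Each such repair turns $r$ into a shuffle square by itself, and the window of $r$ lies inside the window of $w$. Concatenation with the unchanged $S_i$ then finishes. Your case split on $|r'|$, the fallback to concatenations $S_kr'$, and the residual tracking through long periodic-ray blocks are artifacts of the misplaced remainder.

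Two further points would break the finite check even with the orientation fixed.

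First, you require the result to stay in $\cC^1$ for (ii). Assertion (ii) carries no such requirement, and it cannot. Both equal-pair deletions in $\word{12211212}\in\mathcal R$ leave $\cC^1$ (Example~\ref{repair:ex:class-preservation}). The $\cC^1$ condition belongs to (iii) and (iv).

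Second, for (iii) and (iv) you say the $\cC^1$ condition at the junction with $S_k$ is checked ``locally''. But $S_k$ is unbounded and enters no finite check. The missing device is to restrict the eligible repairs so that the first two letters of $r$ stay untouched: transpositions at $j\geqslant3$ and insertion gaps $j\geqslant2$. Suppose the repaired remainder $r''$ lies in $\cC^1$ and begins with $\word{r}_1\word{r}_2$. A run of three equal letters in $S_kr''$ crossing the junction would lie within the last two letters of $S_k$ and the first two of $r''$. It would then already occur in $S_kr$, a factor of $w\in\cC^1$. So $\cC^1$ is preserved for every possible $S_k$, and the finite checks are run with exactly these restrictions.
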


\begin{proof}
Let $\mathcal R$ be the~set of~$230$ nonempty even-Parikh words in~Table~\ref{tab:C1-even-bad}. The~following finite checks suffice. For~each $r=\word{r}_1\cdots \word{r}_n\in\mathcal R$, enumerate the~operations specified in~Table~\ref{repair:tab:finite-local}, and test the~result with Proposition~\ref{prop:residual-test}. In~the~last two rows, also require membership in~$\cC^1$. Each row succeeds for~all~$230$ words. Listing~\ref{lst:repair-local} specifies these checks. They are performed independently by~\texttt{verify\_local\_repairs.py} after regenerating~$\mathcal R$.

By~Corollary~\ref{cor:C1-factorization}, write $w=S_1\cdots S_k r$, where $r\in\mathcal R$. Apply the~appropriate operation inside~$r$. The~resulting remainder is a~shuffle square, and concatenation with the~unchanged $S_i$ gives a~shuffle square. Distances from the~right endpoint are unchanged. In~(iii) and~(iv), the~finite checks leave the~first two letters of~$r$ unchanged. Thus they also preserve the~$C^1$ condition across the~boundary between $S_k$ and~$r$. All other boundaries are untouched.

For~sharpness, direct enumeration of~all eligible operations in~the~words of~Table~\ref{repair:tab:sharp-local} gives the~stated smallest window or suffix length. In~particular, the~successful transpositions for~the~first word start at~positions $1,3,8,10,15$. Among these, exactly $1,3,8,10$ preserve~$C^1$. The~successful pair deletions for~the~second word start at~positions $2,4,7,11,14$. These complete lists prove the~first three lower bounds. The~last lower bound follows by~checking both inserted pairs at~each of~the~last six gaps, and finding no repair in~$\cC^1$ there. A~repair exists before the~suffix of~length~$6$. Reversal preserves shuffle squares and~$\cC^1$ and converts all right-end assertions to~left-end assertions.
\end{proof}

\begin{table}[ht]
\centering
\caption{Finite checks on~each of~the~$230$ terminal remainders. Positions are numbered from~$1$. In~the~last row the~gap follows position~$j$}
\label{repair:tab:finite-local}
\small
\begin{tabular}{@{}lll@{}}
\toprule
Operation&Eligible positions&Additional condition\\
\midrule
Transpose $\word{r}_j,\word{r}_{j+1}$&$\max\{1,n-9\}\leqslant j<n$&$\word{r}_j\ne \word{r}_{j+1}$\\
Delete $\word{r}_j\word{r}_{j+1}$&$\max\{1,n-10\}\leqslant j<n$&$\word{r}_j=\word{r}_{j+1}$\\
Transpose, preserve $C^1$&$\max\{3,n-14\}\leqslant j<n$&$\word{r}_j\ne \word{r}_{j+1}$\\
Insert $\word{11}$ or $\word{22}$, preserve $C^1$&$\max\{2,n-6\}\leqslant j\leqslant n$&---\\
\bottomrule
\end{tabular}
\end{table}

\begin{table}[ht]
\centering
\caption{Examples establishing the~optimal locality constants}
\label{repair:tab:sharp-local}
\small
\begin{tabular}{@{}llr@{}}
\toprule
Operation&Word&Optimal constant\\
\midrule
Transposition&$\word{122112112122122122112112}$&$10$\\
Pair deletion&$\word{122112112122122112211212}$&$11$\\
Transposition preserving $C^1$&$\word{122112112122122122112112}$&$15$\\
Pair insertion preserving $C^1$&$\word{12211211212212212112}$&$6$\\
\bottomrule
\end{tabular}
\end{table}

\begin{example}\label{repair:ex:class-preservation}
The~distinction between (i) and~(iii) is necessary. For~the~first word of~Table~\ref{repair:tab:sharp-local}, the~rightmost successful transposition gives
\begin{align*}
 \word{122112112122121222112112},
\end{align*}
which contains~$\word{222}$. Transposing positions $10,11$ instead gives
\begin{align*}
 \word{122112112212122122112112}\in\SSq\cap\cC^1.
\end{align*}
Nor can one always strengthen (ii) by~requiring the~result to~remain in~$\cC^1$: deleting either available equal pair from~$\word{12211212}$ gives $\word{111212}$ or~$\word{122212}$. The~first is a~shuffle square and the~second is not, but neither is doubly binary.
\end{example}

\subsection{Completion at~the~endpoint and longer runs}

\begin{theorem}\label{repair:thm:completion}
For~every $w\in\cC^1$ with even Parikh vector, there is a~word
\begin{align*}
 z\in\{\word{1122},\word{1212},\word{1221},
          \word{2112},\word{2121},\word{2211}\}
\end{align*}
such that $wz\in\SSq\cap\cC^1$. Four appended letters are sometimes necessary, even if the~completed word need not remain in~$\cC^1$.
\end{theorem}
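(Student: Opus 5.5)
The plan is to reduce to the finite terminal core exactly as in the proof of Theorem~\ref{repair:thm:local}. For $w$ empty, take $z=\word{1122}$, which is an ordinary square of $\word{12}$ under a suitable coloring: the supports $\{1,3\}$ and $\{2,4\}$ both induce $\word{12}$. For $w$ nonempty, apply Corollary~\ref{cor:C1-factorization} to write $w=S_1\cdots S_k r$, where $r$ is empty or one of the $230$ words of Table~\ref{tab:C1-even-bad}. Every $S_i$ has even Parikh vector, and so does $w$, hence so does $r$.

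If $r=\eps$, then $w$ is a shuffle square. In this case it suffices to append a $C^1$-compatible $z$ that is itself a shuffle square, and to use Lemma~\ref{lem:closure-shuffle}(i). The candidates $\word{1212}$ and $\word{2121}$ are ordinary squares. Hence, if $w$ ends in $\word{1}$ or in $\word{11}$, we append $\word{2121}$, which keeps all runs of length at most $2$. If $w$ ends in $\word{2}$ or in $\word{22}$, we append $\word{1212}$ instead.

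If $r\ne\eps$, it suffices to find, for each $r\in\mathcal R$, some $z$ in the six-element list with $rz\in\SSq\cap\cC^1$. Concatenation with $S_1\cdots S_k$ then preserves the shuffle-square property. The $C^1$ condition holds across the $S_k|r$ boundary because that boundary is unchanged, and it holds across the $r|z$ boundary because $rz\in\cC^1$. This is a finite check of at most $230\cdot 6$ words with Proposition~\ref{prop:residual-test}, to be recorded as an additional row of the verification in the style of Listing~\ref{lst:repair-local}. The expected main obstacle is that this check must succeed for every one of the $230$ remainders; with the restricted alphabet of balanced $z$, nothing a priori guarantees it. If some remainder failed, I would fall back to absorbing $S_k$ as in the proof of Theorem~\ref{repair:thm:distance}, splitting into the finite case $|S_k|\leqslant38$ and the periodic-ray case $S_k=U(\word{1122})^m e$, and checking $S_kr z$ instead. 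My expectation, however, is that the direct check on $\mathcal R$ succeeds.

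For necessity of four letters, exhibit a single even-Parikh $C^1$-word $w$ such that $w z'$ is not a shuffle square for every word $z'$ of length $0$ or $2$ over $\cA$, with no $C^1$ restriction on $z'$. Odd lengths are excluded by Lemma~\ref{lem:even-necessary}. The natural candidate is a remainder such as $\word{1221}$. Appending $\word{11}$ gives $\word{122111}$, and appending $\word{22}$ gives $\word{122122}$, which is a square. So $\word{1221}$ fails, and I would instead search among longer words of $\mathcal R$, for example the length-$34$ word \eqref{eq:C1-sharp34}. The search tests the empty extension together with the two even-Parikh length-$2$ extensions $\word{11}$ and $\word{22}$; the mixed pairs $\word{12}$ and $\word{21}$ make the Parikh vector odd. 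This is again a tiny finite residual-state computation.
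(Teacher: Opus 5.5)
Your argument for the existence of $z$ follows the paper's route: greedy factorization by Corollary~\ref{cor:C1-factorization}, then a finite test of the six candidates against each of the $230$ remainders in $\mathcal R$ (the paper reports that this test succeeds), then a direct choice when the remainder is empty. Your choice of $\word{2121}$ or $\word{1212}$, according to the last letter, works as well as the paper's $\word{2211}$ or $\word{1122}$. Note that $\word{1122}$ is a shuffle square, not an ordinary square.

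The fallback you describe could never help. Each $r\in\mathcal R$ is itself an admissible $w$ with $k=0$, so the check on $\mathcal R$ is necessary for the theorem, not merely sufficient. If some remainder failed it, the statement would be false, and no absorption of $S_k$ could rescue it.

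The genuine gap is the sharpness assertion. You correctly reduce it to exhibiting an even-Parikh $C^1$-word $w$ such that $w$, $w\word{11}$ and $w\word{22}$ all lie outside $\SSq$, and you correctly discard $\word{1221}$. But you never produce such a word. Naming the length-$34$ word without checking it is a search plan, not a proof of an existence claim.

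The paper's witness is $w=\word{122112112122}$, which in fact lies in $\mathcal R$ at length $12$, so your search would have found it. In each of $w$, $w\word{11}$ and $w\word{22}$, the second and third letters must join the copy containing the first letter, since the other copy would otherwise begin with $\word{2}$. Hence every candidate root begins with $\word{122}$. A short case analysis on which copy receives each remaining $\word{2}$, or three runs of the recognizer, rules out all three words. On the other hand, $w\word{1122}=\word{1221121121221122}$ is a shuffle square with root $\word{12211122}$, on the supports $\{1,2,3,4,7,8,11,12\}$ and its complement. With this witness supplied, your argument is complete.
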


\begin{proof}
For~each $r\in\mathcal R$, testing the~six displayed words finds at~least one~$z$ with $rz\in\SSq\cap\cC^1$. This finite check is included in~Listing~\ref{lst:repair-local} and in~\texttt{verify\_local\_repairs.py}. Apply it to~the~terminal remainder in~Corollary~\ref{cor:C1-factorization}. No existing letter is changed, so the~boundary with the~preceding shuffle-square blocks is preserved. If the~remainder is empty, append $\word{1122}$ when $w$ is empty or ends in~$\word{2}$, and append $\word{2211}$ otherwise. Each chosen suffix is itself a~shuffle square and creates no run of~length~$3$.

For~optimality, let $w=\word{122112112122}$. The~word~$w$ is not a~shuffle square. Any appended word of~length less than~$4$ that could complete it to~a~shuffle square must, by~parity, be $\eps$, $\word{11}$, or~$\word{22}$. The~last two choices fail the~exact recognizer, whereas
\begin{align*}
 w\word{1122}=\word{1221121121221122}
\end{align*}
is a~shuffle square with root~$\word{12211122}$.
\end{proof}

Listing~\ref{lst:repair-local} records the~finite checks for~Theorems~\ref{repair:thm:local} and~\ref{repair:thm:completion}, including sharpness. Positions are numbered from~$1$, an~insertion gap~$j$ follows the~first~$j$ letters, and slices have inclusive endpoints, with an~empty slice when the~lower endpoint exceeds the~upper one. Each pair~$(j,d)$ records a~successful repair at~position or gap~$j$. For~transpositions and deletions, $d=n-j+1$ is the~length of~the~suffix starting at~the~first affected position; for~insertions, $d=n-j$ is the~length of~the~suffix following the~gap. Thus minimizing~$d$ locates the~successful repair nearest the~right endpoint.

\begin{lstlisting}[style=pseudocodestyle,float=htbp,
caption={Local repairs, sharpness, and balanced completions},
label={lst:repair-local}]
function REPAIRS(r)
  n <- length(r)
  S[swap], S[delete], S[swapC1], S[insertC1] <- empty sets
  for j <- 1,...,n-1 do
    if (*@$\word{r}_{j}$@*) != (*@$\word{r}_{j+1}$@*) then
      v <- r[1:j-1]*(*@$\word{r}_{j+1}$@*)*(*@$\word{r}_{j}$@*)*r[j+2:n]
      if SQ0(v) then
        insert (j,n-j+1) into S[swap]
        if v is in C^1 then
          insert (j,n-j+1) into S[swapC1]
    else
      v <- r[1:j-1]*r[j+2:n]
      if SQ0(v) then insert (j,n-j+1) into S[delete]
  for j <- 0,...,n and (*@$\word{a}$@*) in {(*@$\word{1}$@*),(*@$\word{2}$@*)} do
    v <- r[1:j]*(*@$\word{a}$@*)*(*@$\word{a}$@*)*r[j+1:n]
    if SQ0(v) and v is in C^1 then
      insert (j,n-j) into S[insertC1]
  return S

B <- BAD-PREFIX-TREE(1, 38)
R <- {r in union(B[1],...,B[38]) : r has even Parikh vector}
assert size(R) = 230
for r in R do
  S <- REPAIRS(r)
  for (op,b,j0) in {(swap,10,1),(delete,11,1),
                    (swapC1,15,3),(insertC1,6,2)} do
    assert some (j,d) in S[op] has j >= j0 and d <= b
  assert some z in {(*@$\word{1122}$@*),(*@$\word{1212}$@*),(*@$\word{1221}$@*),(*@$\word{2112}$@*),(*@$\word{2121}$@*),(*@$\word{2211}$@*)}
         satisfies SQ0(r*z) and r*z is in C^1

# Sharpness uses all positions, without the restriction j >= j0.
for (op,r,b) in the rows of Table (*@\ref{repair:tab:sharp-local}@*) do
  S <- REPAIRS(r)
  assert S[op] is nonempty
  assert min{d : (j,d) in S[op]} = b

w <- (*@$\word{122112112122}$@*)
assert not SQ0(w) and not SQ0(w*(*@$\word{11}$@*)) and not SQ0(w*(*@$\word{22}$@*))
assert SQ0(w*(*@$\word{1122}$@*)) and w*(*@$\word{1122}$@*) is in C^1
\end{lstlisting}

For~a~binary word~$w$ with run lengths $\ell_1,\ldots,\ell_s$, put
\begin{align*}
 E(w)=\sum_{i=1}^s\left\lfloor\frac{\ell_i-1}{2}\right\rfloor,
 \qquad E(\eps)=0.
\end{align*}
This quantity counts the~equal-pair deletions needed to~reduce each run to~length~$1$ or~$2$ while retaining its parity.

\begin{corollary}\label{repair:cor:long-runs}
Every binary word~$w$ with even Parikh vector can be reduced to~a~member of~$\SSq_{\,\varepsilon}$ by~at~most $E(w)+1$ deletions of~adjacent equal pairs. In~particular,
\begin{align*}
 \delta(w)\leqslant2E(w)+2.
\end{align*}
The~bound $E(w)+1$ for~the~number of~equal-pair deletions is sharp.
\end{corollary}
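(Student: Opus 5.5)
The plan is to reduce each run of $w$ to length $1$ or $2$ by deleting adjacent equal pairs, then to spend at most one further pair deletion using Theorem~\ref{repair:thm:local}(ii).

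\textbf{Reducing the runs.} Write the run decomposition $w=\word{a}_1^{\ell_1}\cdots\word{a}_s^{\ell_s}$. From each run of length $\ell_i\geqslant3$, delete $\lfloor(\ell_i-1)/2\rfloor$ adjacent equal pairs. The run then has length $1$ if $\ell_i$ is odd and $2$ if $\ell_i$ is even. No run disappears, so runs never merge, and the run structure is preserved. The result $w'$ avoids $\word{111}$ and $\word{222}$, hence $w'\in\cC^1$. Each letter count drops by an even number, so $w'$ still has even Parikh vector. This uses exactly $E(w)$ pair deletions.

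\textbf{Finishing and the deletion bound.} If $w'\in\SSq_{\,\varepsilon}$ we are done. Otherwise $w'$ is a nonempty even-Parikh $C^1$-word that is not a shuffle square, and Theorem~\ref{repair:thm:local}(ii) supplies one deletion of an adjacent equal pair turning $w'$ into a shuffle square. The theorem's ``makes $w$ a shuffle square'' is read via $\SSq_{\,\varepsilon}$, as in Listing~\ref{lst:repair-local}, so an empty result counts. The total is at most $E(w)+1$ pair deletions. Each pair deletion removes two positions, so the deleted set $D$ satisfies $|D|\leqslant 2E(w)+2$, and the retained subword lies in $\SSq_{\,\varepsilon}$. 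This gives $\delta(w)\leqslant2E(w)+2$.

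\textbf{Sharpness.} Here we need an even-Parikh word $w$ that genuinely needs $E(w)+1$ pair deletions. The main obstacle is that deleting pairs in other ways, for instance splitting runs differently or deleting an odd run's last letter, might do better. The first candidate is $w=\word{1221}$, where $E(w)=0$ and $w\notin\SSq$, so at least one deletion is needed. For a witness with $E>0$, take $w=\word{1}^3\word{22}\word{1}$, the image of $\word{1221}$ with its first run lengthened; this gives $E(w)=1$, so the claim is that two deletions are needed. The only single pair deletions are $\word{1221}$ from the first run and $\word{1111}$ from the middle run, and the latter is a shuffle square. That kills this candidate, so I would need a word where every deletion of a whole run, or of anything else, fails.

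Since the statement only asserts that the bound $E(w)+1$ is attained for some $w$, the $E=0$ example $\word{1221}$ suffices. It is a nonempty non-shuffle-square with even Parikh vector and $E=0$, so exactly $1=E+1$ deletion is required; deleting $\word{22}$ leaves $\word{11}\in\SSq$. If a family with arbitrarily large $E$ were wanted, I would try $\word{1}^{2j+1}\word{2}\word{1}\word{2}$-type words and check them with Proposition~\ref{prop:residual-test}. That step is the one I expect to be delicate.
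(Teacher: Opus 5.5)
Your proof of the upper bound is the paper's argument. Reducing odd runs to length $1$ and even runs to length $2$ costs exactly $E(w)$ pair deletions and gives an even-Parikh $C^1$-word, and Theorem~\ref{repair:thm:local}(ii) supplies the last deletion, so $\delta(w)\leqslant 2E(w)+2$ follows.

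\textbf{The gap is the sharpness assertion.} You read it as ``attained by some $w$'' and offer only $\word{1221}$, where $E=0$. An example at $E=0$ shows only that the summand $+1$ cannot be dropped. It is equally consistent with the false constant bound $1$, and it would serve just as well as a ``sharpness'' witness for $100E(w)+1$. So it says nothing about whether the growth in $E(w)$ is needed. The content of the claim, as the paper proves it, is that for every value of $E$ some word needs exactly $E+1$ pair deletions. Your fallback family fails outright. Every $\word{1}^{2j+1}\word{212}$ is already a shuffle square with root $\word{1}^{j+1}\word{2}$: one copy uses positions $1,\dots,j+1$ and $2j+2$. For example, $\word{111212}$ has supports $\{1,2,4\}$ and $\{3,5,6\}$. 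These words need no deletions at all.

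\textbf{The missing idea} is to put the extra length into the middle run, so that $E$ measures the cost of the only available repair. Take $w_k=\word{1}\word{2}^{2k}\word{1}$ with $k\geqslant1$, so $E(w_k)=k-1$.
\begin{itemize}
\item Both $\word{1}$'s are isolated, so the only adjacent equal pairs lie in the $\word{2}$-run. Any sequence of pair deletions therefore passes through the words $\word{1}\word{2}^{2j}\word{1}$, $j=k,k-1,\dots$.
\item For $j\geqslant1$ none of these is a shuffle square. A root contains exactly one $\word{1}$. The copy containing the first position makes the root begin with $\word{1}$, and the copy containing the last position makes it end with $\word{1}$. This is impossible for a root of length $j+1\geqslant2$.
\item Hence exactly $k=E(w_k)+1$ deletions are needed, the first success being $\word{11}$.
\end{itemize}
Your $\word{1221}$ is the case $k=1$. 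Your candidate $\word{1}^3\word{22}\word{1}$ failed because lengthening an outer run raises $E$ without making the cheap repair (deleting the middle run, which merges into $\word{1111}$) any more expensive.
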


\begin{proof}
Reduce every odd run to~length~$1$ and every even run to~length~$2$. Exactly $E(w)$ equal-pair deletions produce a~$C^1$ word with even Parikh vector. If it does not already belong to~$\SSq_{\,\varepsilon}$, one further pair deletion suffices by~Theorem~\ref{repair:thm:local}(ii).

For~sharpness, consider $w=\word{1}\word{2}^{2k}\word{1}$, $k\geqslant1$. Until all middle letters have been removed, the~only deletable equal pairs consist of~two~$\word{2}$'s. No word $\word{1}\word{2}^{2j}\word{1}$ with $j\geqslant1$ is a~shuffle square: its two copies would each contain one~$\word{1}$, while one would begin with that letter and the~other would end with it, which is impossible for~equal roots containing~$\word{2}$. Thus exactly $k=E(w)+1$ pair deletions are necessary. This sharpness concerns adjacent equal pairs. Unrestricted deletions can simply remove the~two endpoint letters.
\end{proof}

\begin{remark}\label{repair:rem:run-three}
The~single-repair conclusion does not extend unchanged to~words whose runs have length at~most~$3$. For~example, direct enumeration gives $\delta(\word{12221112})=4$. Deleting an~equal pair from each of~its two length-$3$ runs leaves $\word{1212}$. Thus at~least two equal-pair deletions are needed in~this larger class. It is natural to~ask whether two such deletions suffice for~every even-Parikh word with run lengths at~most~$3$. The~results above do not settle this question.
\end{remark}

\section{Scope of~the~computer assistance}
\label{sec:computer-assistance}

\noindent
The~finite-differentiability and Kolakoski theorems, and the~local-repair results of~Section~\ref{sec:repairs}, use explicitly bounded computations. The~proofs specify why these finite tasks suffice for~arbitrary lengths: strong induction for~$C^3$, terminating extension layers for~$C^2$, and residual loops and exits for~the~$422$ $C^1$ rays. In~Theorem~\ref{repair:thm:distance}, the~short residual cycle and the~four absorption transitions additionally handle every phase of~the~rays and every long minimal shuffle-square prefix. No unbounded assertion is inferred solely from~a~cutoff or stabilization of~counts. The~morphism results in~Section~\ref{sec:morphisms} have purely mathematical proofs.

The~supplementary material contains complete Python implementations of~the~finite proof computations, a~shared recognition module, and a~driver script. The~programs use only the~Python standard library and regenerate their input sets without external data. Their roles are as~follows.
\begin{enumerate}[(i)]
\item \classcode{} verifies the~$C^1$, $C^2$, and~$C^3$ computations. For~$C^3$, these include the~$175$-node tree, its $46\,600$ parity-base words, and $34$ exceptions. It also checks that all~$34$ exceptions and the~terminal words of~length~$23$ are smooth. For~$C^2$, the~program checks the~$823$-node tree, $208$ short exceptions, $288$ minimal suffix squares, and extension layers of~$13\,752$ and~$200$ candidates, leaving four and zero exceptions, respectively. For~$C^1$, it checks the~$230$ even-Parikh cores and all~$422$ periodic seeds, including the~residual loops and exits. Three implementations of~the~exact recognizers agree on~every nonempty binary word through length~$16$.

\item \repaircode{} regenerates the~$6\,609$ bad-prefix words through length~$38$, the~$2\,980$ minimal shuffle-square prefixes, and all~$5\,768$ admissible concatenations with the~two odd exceptions. It checks deletion bounds, periodic residuals, absorption transitions, all four sharp locality constants, boundary preservation, and balanced completions, including the~finite checks establishing sharpness. These are the~finite tasks used in~Section~\ref{sec:repairs}. The~finite deletion and local-modification searches are specified in~Listings~\ref{lst:repair-deletion} and~\ref{lst:repair-local}.

\item \texttt{shuffle\_core.py} supplies the~exact recognition and residual-transition routines used by~\repaircode{}. As~an~implementation check, it compares its run and letter recurrences with independent literal support enumeration on~all~$8\,191$ binary words through length~$12$, including the~empty word.

\item \texttt{verify\_all.py} runs the~three preceding programs in~fresh subprocesses, stops on~failure, and records their exit statuses and execution times.
\end{enumerate}

The~smooth-word conclusions follow from~the~$C^3$ theorem and the~smoothness checks above, as~explained in~Corollary~\ref{cor:stable-regime}. The~Kolakoski characterization follows from~the~same theorem and the~explicit short-word arguments in~Section~\ref{sec:kolakoski-prefixes}. No additional enumeration of~smooth words or bounded testing of~morphisms is used in~these proofs. The~Kolakoski arguments use no assumption about letter or factor frequencies.

After extracting the~supplementary archive, run \texttt{python verify\_all.py} from the~directory containing the~driver script. This command runs all checks afresh and writes computed JSON data and execution logs. The~complete verification succeeded under CPython~3.12.14. The~entry point rejects the~assertion-disabling options \texttt{-O} and~\texttt{-OO}. The~file \texttt{README.md} gives requirements, commands, a~program-to-result mapping, and expected results.

Computed data and execution logs accompany the~programs as~reference outputs; they are not inputs to~the~verification. The~article contains the~recurrences, pseudocode, decisive counts, and mathematical arguments explaining why the~finite verifications establish the~stated results.

\section*{Statements and Declarations}

\noindent\textbf{Funding.} The~author received no financial support for~the~research, authorship, or publication of~this article.

\noindent\textbf{Competing interests.} The~author declares that he has no competing interests.

\noindent\textbf{Data availability.} The~Python programs and computed outputs supporting the~finite proof computations are included in~the~supplementary material described in~Section~\ref{sec:computer-assistance}. All input sets are generated by~the~programs; no external datasets are required.

\noindent\textbf{Use of generative artificial intelligence.} During the~preparation of~this manuscript, the~author used OpenAI's ChatGPT (GPT-6) as~an~auxiliary tool to~assist with writing the~Python verification scripts,  literature search, and language editing. All AI-assisted content was independently reviewed and verified by~the~author, who takes full responsibility for~the~manuscript.

\end{document}